\newtheorem{thm}{Theorem}[section]
\newtheorem{prop}[thm]{Proposition}
\newtheorem{lemma}[thm]{Lemma}
\newtheorem{cor}[thm]{Corollary}
\def\Im{\mathop{\rm {Im}}\nolimits}
\def\Ker{\mathop{\rm {Ker}}\nolimits}
\begin{document}
\title {Unbounded strongly irreducible operators and transitive representations of quivers
on infinite-dimensional Hilbert spaces
}
\author{Masatoshi Enomoto}
\address[Masatoshi Enomoto]{Institute of Education and Research, Koshien University, Takarazuka, Hyogo 665-0006, Japan}
\email{enomotoma@hotmail.co.jp}
\author{Yasuo Watatani}
\address[Yasuo Watatani]{Department of Mathematical Sciences, 
Kyushu University, Motooka, Fukuoka, 819-0395, Japan}
\email{watatani@math.kyushu-u.ac.jp}

\maketitle

\begin{abstract}
We introduce unbounded strongly irreducible operators and transitive operators. These operators are related to a certain class of indecomposable Hilbert representations of quivers 
on infinite-dimensional Hilbert spaces.
We regard the theory of Hilbert representations of quivers is a generalization of the theory of unbounded operators.
A non-zero Hilbert representation of a quiver is said to be transitive if 
the endomorphism algebra is trivial. 
 If a Hilbert representation of a quiver is transitive, then it is indecomposable. 
But the converse is not true. Let $\Gamma$ be a quiver whose underlying undirected graph is an extended Dynkin diagram.
Then there exists an infinite-dimensional transitive Hilbert representation of $\Gamma$ if and only if $\Gamma$ is not an oriented cyclic quiver.

\medskip

\noindent KEYWORDS: unbounded strongly irreducible operators, 
transitive operators,
quiver, indecomposable representation,  Hilbert space.

\medskip

\noindent AMS SUBJECT CLASSIFICATION: Primary 47A65, Secondary 

46C07,47A15,16G20.

\end{abstract}

\section{\textbf{Introduction}.}

A bounded linear operator $T$ on a Hilbert space $H$ is called 
strongly irreducible if $T$ cannot be decomposed to a 
non-trivial (not necessarily orthogonal) direct sum of two operators, 
that is, if 
there exist no non-trivial invariant closed subspaces $M$ and $N$ 
of $T$ such that $M \cap N = 0$ and $M + N = H$. 
A strongly irreducible operator is an 
infinite-dimensional generalization of a Jordan block. 
F. Gilfeather \cite{Gi} introduced the notion of strongly irreducible operator .
We refer to 
excellent  books \cite{JW1} and  \cite{JW2} by 
Jiang and Wang on 
strongly irreducible operators. 

 We \cite{EW1},\cite{EW2} studied the relative positions of subspaces in a separable 
infinite-dimensional Hilbert space after  Nazarova \cite{Na1}, Gelfand and 
Ponomarev \cite{GP}. 
We think that relative positions of subspaces have a close relation with 
subfactor theory \cite{Jo},\cite{GHJ}. 
Let $H$ be a Hilbert space and $E_1, \dots E_n$ be $n$ subspaces 
in $H$.  Then it is said that  ${\mathcal S} = (H;E_1, \dots , E_n)$  
is a system of $n$ subspaces in $H$
or a $n$-subspace system in $H$. 
For two systems ${\mathcal S} = (H;E_1, \dots , E_n)$ and ${\mathcal T} = (K;F_1, \dots , F_n)$,  
${\mathcal S}$ and ${\mathcal T}$
are isomorphic if there exists 
an invertible operator $\varphi:H\to K$
such that 
$\varphi(E_{i})=F_{i}$ for
$i=1,2,\cdots,n$.
A non-zero system  
${\mathcal S} = (H;E_1, \dots , E_n)$
is said to be
indecomposable
if it cannot be decomposed to a non-trivial
direct sum
of two
systems up to isomorphism.
We recall that strongly irreducible operators
 contribute an important role
to construct indecomposable systems 
of four subspaces \cite {EW1}.
 
On the other hand Gabriel \cite{Ga} introduced a 
finite-dimensional (linear) representations of quivers by 
attaching vector spaces and linear maps for 
vertices and edges of quivers respectively.   
A finite-dimensional indecomposable representation 
of a quiver is a direct graph generalization of a Jordan block. 
Historically Kronecker\cite{Kro} solved  the indecomposable representations of  $\tilde{A_{1}}$ ,
the so called matrix pencils in 1890.
Nazarova\cite{Na1} and Gelfand-Ponomarev\cite{GP} 
treated the four subspace situation $\tilde{D_{4}}$.
Donovan-Freislich\cite{DF} and Nazarova\cite{Na2}
classified the indecomposable representations of 
the tame quivers.
About these topics we also refer to Bernstein-Gelfand-Ponomarev \cite{BGP}, V. Dlab-Ringel \cite{DR}, Ringel \cite{Ri2},
Gabriel-Roiter \cite{GR}, Kac \cite{Ka},\dots .

We recall infinite-dimensional representations in 
purely algebraic setting. 
In 
\cite{Au} Auslander  found that if a finite-
dimensional algebra 
is not of finite representation type,
then there exist indecomposable modules which are not of finite length.
These are trivially infinite-
dimensional.
Several works about infinite-dimensional Kronecker modules
have been done by N. Aronszjan, A. Dean,U.Fixman ,F.Okoh and F.Zorzitto
 in \cite{Ar},\cite{DZ1}, \cite{F},\cite{FO},\cite{FZ},\cite{Ok}.
A.Dean and F.Zorzitto \cite{DZ2}
constructed a family of infinite-dimensional indecomposable
representations of $\tilde{D_{4}}$.
K.Ringel  \cite{Ri1} founded a general theory of infinite-dimensional  
representations of tame, 
hereditary algebra ( see also \cite{Ri3}, \cite{KR} ).

In \cite{EW3},\cite{E} 
we started to investigate representation theory of quivers 
on Hilbert spaces.
We asked the existence of an indecomposable infinite-dimensional 
Hilbert representation for any quiver whose underlying undirected 
graph is one of extended Dynkin diagrams. 
And we solved it affirmatively 
using the unilateral shift $S$. The argument works even if we replace the unilateral shift $S$ with any strongly irreducible operator.
From this,it is suggested that strong irreducible 
operators  are useful to construct indecomposable Hilbert representations 
of quivers \cite{EW4}. 
 From the analogy of transitive lattice
(see P.R.Halmos\cite{H} and K.J.Harrison,H. Radjavi and P. Rosenthal \cite{HRR}),
we called an indecomposable Hilbert representation $(H,f)$ 
of a quiver such that 
$End(H,f) ={\mathbb C} I$  transitive. If a Hilbert representation of a quiver is transitive, then it is indecomposable. 
But the converse is not true. 
Therefore it is important 
to investigate the existence problem of an transitive infinite-dimensional 
Hilbert representation for any quiver whose underlying undirected 
graph is one of extended Dynkin diagrams.
In this direction, we \cite{EW4} showed  two kinds of constructions of quite non-trivial 
transitive Hilbert representations $(H,f)$ of 
the Kronecker quiver. 

In purely algebraic setting,
a representation of a quiver is called
a brick if its endomorphism ring is a division ring.
But for a Hilbert representation $(H,f)$, 
$End(H,f)$ is a Banach algebra and not 
isomorphic to its purely algebraic endomorphism ring 
in general, because we only consider bounded endomorphisms.
By Gelfand-Mazur theorem, any Banach algebra over $\mathbb{C}$
which is a division ring must 
be isomorphic to
 $\mathbb{C}$.

We remark that  locally  scalar representations of quivers 
 were introduced by 
Kruglyak and Roiter \cite{KRo}. 
But their subject is different from ours.  We also refer to S. Kruglyak, V. Rabanovich and Y. Samoilenko
\cite{KRS} and Y. P. Moskaleva and Y. S. Samoilenko \cite{MS}. 

 We consider finite-dimensional indecomposable representations of 
quivers whose underlying graph is Dynkin diagram.They are transitive (cf.\cite{As}).
 
But it is extremely difficult to solve the existence problem for
infinite-dimensional indecomposable (also transitive) 
Hilbert representations of quivers whose underlying undirected graph is Dynkin diagram.
The existence is not known even for quivers whose underlying undirected graph 
is $D_{4}$.

In this paper 
we introduce unbounded strongly irreducible operators and transitive operators. 
It is known that any unbounded closed operator $T$ on a Hilbert space can be realized as a quotient $BA^{-1}$ of bounded operators $A$ and $B$ on $H$.
This fact is related with operator ranges and intersections of domains of unbounded operators.
See,for example,P.Fillmore and J.Williams
\cite{FiW},W.E.Kaufman\cite{Kau} and
H.Kosaki\cite{Ko}.
We point out that
the study of an unbounded closed operator $T=BA^{-1}$ can be translated to the study of a Hilbert representation given by $A$ and $B$ of the Kronecker quiver. 
We show that  some transitive operators
are constructed by a certain transitive Hilbert representation of the Kronecker quiver. 
We regard the theory of Hilbert representations of quivers is a generalization of the theory of unbounded operators.
We also solve completely 
the existence problem of 
infinite-dimensional transitive
Hilbert representations of quivers
 whose underlying undirected graphs are
the extended Dynkin diagrams.
Let $\Gamma$ be a quiver whose underlying undirected graph 
is an extended Dynkin diagram.
If the underlying undirected graph of $\Gamma$
is not $\widetilde{A_{n}}$,
then there exists an infinite-dimensional transitive Hilbert 
representation of $\Gamma$.
If the underlying undirected graph of $\Gamma$
is  $\widetilde{A_{n}}$,
then there exists an infinite-dimensional transitive Hilbert 
representation of $\Gamma$ if 
and only if $\Gamma$
is not an oriented cyclic quiver.
We used unbounded  transitive operators
based on an idea  
of a transitive lattice by K.J.Harrison,H. Radjavi and P. Rosenthal(\cite{HRR},\cite{RR}).

This work was supported by JSPS KAKENHI 
Grant Number 23654053 and 25287019.

\section{\textbf{Hilbert representations of quivers}}

A quiver $\Gamma =(V,E,s,r)$ is a quadruple consisting of the set $V$ of
vertices, the set $E$ of arrows, and two maps $s,r:E\rightarrow V$ which
associate with each arrow $\alpha \in E$ its support $s(\alpha )$ and range 
$r(\alpha )$. In this paper we assume that  $\Gamma$ is a finite quiver.

 We denote by $\alpha:x\to y$
an arrow with $x=s(\alpha)$
and $y=r(\alpha)$.
Thus a quiver is a directed graph.
We denote by 
$ \left\vert \Gamma \right\vert $ the underlying
undirected graph of a quiver $\Gamma$ .
We say that a quiver $\Gamma $ is
connected if $\left\vert \Gamma \right\vert $ is a connected graph. 
A quiver 
$\Gamma $ is called finite if both $V$ and $E$ are finite sets.
A path of length $m$
is a finite sequence 
$\alpha=(\alpha_{1},\cdots,\alpha_{m})$
of arrows such that $r(\alpha_{k})=s(\alpha_{k+1})$ for
$k=1,\cdots,m-1 $.
Its support is $s(\alpha)=s(\alpha_{1})$
and its range is 
$r(\alpha)=r(\alpha_{m})$.
A path of length $m\geq 1$
is called a cycle if its support and range coincide.
A cycle of length one is called a loop. 
A quiver which is a loop is also called the Jordan quiver $L$.
A quiver which is a cycle of length $m\geq 1$ is also called 
the oriented cyclic quiver $C_{m}$ with length $m\geq 1$.
A quiver is said to be acyclic if it contains no cycles.

\noindent
{\bf Definition.}
Let $\Gamma =(V,E,s,r)$ be a finite quiver. It is said that $(H,f)$ is
a Hilbert representation of  $\Gamma $ if $H=(H_{v})_{v\in V}$ is a family of
Hilbert spaces and $f=(f_{\alpha })_{\alpha \in E}$ is a family of bounded
linear operators $\ f_{\alpha }:H_{s(\alpha )}\rightarrow H_{r(\alpha )}.$

\noindent
{\bf Definition.}
Let $\Gamma =(V,E,s,r)$ be a finite quiver. Let $(H,f)$ and $(K,g)$ be
Hilbert representations of $\Gamma .
$ A homomorphism $T:(H,f)$ $\rightarrow $
$(K,g)$ is a family $T=(T_{v})_{v\in V\text{ }}$of bounded operators $%
T_{v}:H_{v}$ $\rightarrow $ $K_{v}$ satisfying for any arrow $\alpha \in E$
, $T_{r(\alpha )}f_{\alpha }=g_{\alpha }T_{s(\alpha )}.$
 The composition
$T\circ S$ of homomorphisms $T$ and $S$ is defined
by $(T\circ S)_{v}=T_{v}\circ S_{v}$ for $v\in V.$ 
In this way we have obtained a category HRep$%
(\Gamma )$ of Hilbert representations of $\Gamma .$ 
We denote by Hom$%
((H,f),(K,g))$ the set of homomorphisms
$T:(H,f)\to (K,g)$.
We denote by $End(H,f):=Hom((H,f),(H,f))$
the set of endomorphisms.
We can regard 
$End(H,f)$
as a subalgebra of $\oplus_{v\in  V}B(H_{v})$.
In the paper we distinguish the following two classes of operators.
A bounded operator $A$ is said to be a projection(resp. an idempotent) if
$A^{2}=A=A^{*}$(resp.$A^{2}=A$) .
We denote by
\begin{align*}
I&dem(H,f):=\{T\in  End(H,f)\mid T^{2}=T\}\\
=&\{T=(T_{v})_{v\in  V}\in  End(H,f)\mid T_{v}^{2}=T_{v}(\text{for any  }v\in  V)\} \\
\end{align*}
the set of all idempotents of $End(H,f)$.
 Let 0=(0$_{v}$)$_{v\in V}$ be a
family of zero endomorphisms and $I=(I_{v})_{v\in V}$ be a family of
identity endomorphisms. 
It is said that $(H,f)$ and $(K,g)$ are isomorphic,
denoted by $(H,f)\cong $ $(K,g),$ if there exists an isomorphism $\varphi :$
$(H,f)\rightarrow (K,g)$, that is,there exists a family $\varphi =(\varphi
_{v})_{v\in V}$ of bounded invertible operators $\varphi _{v}\in
B(H_{v},K_{v})$ such that ,for any arrow $\alpha \in E,\varphi _{r(\alpha
)}f_{\alpha }=g_{\alpha }\varphi _{s(\alpha )}.$ We say that $(H,f)$ is a
finite-dimensional representation if $H_{v}$ is finite-dimensional for all $%
v\in V.$ And $(H,f)$ is an infinite-dimensional representation if $H_{v}$ is
infinite-dimensional for some $v\in V.$

We recall a notion of indecomposable representation 
in \cite{EW3}
that is, a representation which cannot be 
decomposed into a direct sum of smaller representations anymore.

\noindent
{\bf Definition.}
Let $\Gamma =(V,E,s,r)$ be a finite quiver. Let $(K,g)$ and $(K^{^{\prime
}},g^{^{\prime }})$ be Hilbert representations of $\Gamma .$ 
We define the
direct sum $(H,f)$ $=(K,g)$ $\oplus $ $(K^{^{\prime }},g^{^{\prime }})$ by $%
H_{v}=K_{v}\oplus $ $K_{v}^{^{\prime }}$(for $v\in V)$ and $f_{\alpha
}=g_{\alpha }\oplus $ $g_{\alpha }^{^{\prime }}$(for $\alpha \in E).$ 
It is said
that a Hilbert representation $(H,f)$ is zero, denoted by $(H,f)$ $=0$ if 
$H_{v}=0$ 
$\text{for any }$ $v\in V.$

\noindent
{\bf Definition.}
A Hilbert representation $(H,f)$ of $\Gamma$ is said to be decomposable
if $(H,f)$ is isomorphic to a direct sum of two non-zero Hilbert representations.
A non-zero Hilbert representation $(H,f)$ of $\Gamma$ is called indecomposable if it is not decomposable,
that is,
if $(H,f)\cong (K,g)\oplus (K^{\prime},g^{\prime})$
then $(K,g)\cong 0$ or $(K^{\prime},g^{\prime})\cong 0$.

The following proposition is useful to show the indecomposability 
in concrete examples.
\begin{prop}\cite[Proposition 3.1.]{EW3}
Let $(H,f)$ be a Hilbert representation 
of a quiver $\Gamma$.
Then the following conditions 
are equivalent:
\begin{enumerate}
\item $(H,f)$ is indecomposable.
\item $ Idem(H,f) = \{0,I\}$. 
\end{enumerate}

\end{prop}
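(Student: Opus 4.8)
The plan is to prove the two implications separately, using the standard correspondence between idempotent endomorphisms and direct-sum decompositions, but carried out entirely inside the bounded Hilbert-space category so that the output of each construction is again a Hilbert representation in the sense of the Definition above.

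First I would treat $(2)\Rightarrow(1)$. Assume $Idem(H,f)=\{0,I\}$; this already forces $(H,f)\neq 0$, since otherwise $0=I$. Suppose $(H,f)\cong(K,g)\oplus(K',g')$ via an isomorphism $\varphi$. On the direct sum, the family $E=(E_{v})_{v\in V}$ of first-coordinate projections $E_{v}(\eta,\eta')=(\eta,0)$ lies in $Idem((K,g)\oplus(K',g'))$, because each $g_{\alpha}\oplus g'_{\alpha}$ is block diagonal and hence commutes with $E$. Then $T:=\varphi^{-1}\circ E\circ\varphi$ is an idempotent endomorphism of $(H,f)$, so $T=0$ or $T=I$. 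If $T=0$ then $E=\varphi T\varphi^{-1}=0$, forcing $K_{v}=\Im E_{v}=0$ for all $v$, i.e.\ $(K,g)=0$; if $T=I$ then $E=I$ and $(K',g')=0$. Hence $(H,f)$ is indecomposable.

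Next, for $(1)\Rightarrow(2)$, assume $(H,f)$ indecomposable and let $T=(T_{v})\in Idem(H,f)$. Put $K_{v}:=\Im T_{v}$ and $K'_{v}:=\Im(I_{v}-T_{v})=\Ker T_{v}$. Since each $T_{v}$ is a bounded idempotent, $K_{v}=\Ker(I_{v}-T_{v})$ and $K'_{v}$ are closed subspaces, hence Hilbert spaces in the inherited inner product, with $H_{v}=K_{v}+K'_{v}$ and $K_{v}\cap K'_{v}=0$. From the intertwining relation $T_{r(\alpha)}f_{\alpha}=f_{\alpha}T_{s(\alpha)}$ one gets $f_{\alpha}(K_{s(\alpha)})\subseteq K_{r(\alpha)}$ and $f_{\alpha}(K'_{s(\alpha)})\subseteq K'_{r(\alpha)}$, so the restrictions $g_{\alpha}:=f_{\alpha}|_{K_{s(\alpha)}}$ and $g'_{\alpha}:=f_{\alpha}|_{K'_{s(\alpha)}}$ are bounded and $(K,g),(K',g')$ are genuine Hilbert representations of $\Gamma$. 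The map $\varphi_{v}:H_{v}\to K_{v}\oplus K'_{v}$, $\varphi_{v}\xi:=(T_{v}\xi,(I_{v}-T_{v})\xi)$, is bounded, and $(\eta,\eta')\mapsto\eta+\eta'$ is a bounded two-sided inverse (a one-line check using $T_{v}^{2}=T_{v}$); moreover $\varphi_{r(\alpha)}f_{\alpha}=(g_{\alpha}\oplus g'_{\alpha})\varphi_{s(\alpha)}$ by direct computation, so $\varphi$ is an isomorphism $(H,f)\cong(K,g)\oplus(K',g')$. Indecomposability then forces $(K,g)=0$ or $(K',g')=0$, i.e.\ $T=0$ or $T=I$, so $Idem(H,f)=\{0,I\}$.

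The closest this proof comes to a genuine difficulty is bookkeeping: one must check that the range of a bounded idempotent is closed, so that $(K,g)$ and $(K',g')$ are Hilbert representations in the sense of the Definition and not merely algebraic ones, and that the splitting map $\varphi_{v}$ has a bounded inverse --- both of which are immediate ($\Im T_{v}=\Ker(I_{v}-T_{v})$, and the inverse is exhibited explicitly, so the open mapping theorem is not even needed). Otherwise the argument is the verbatim Hilbert-space analogue of the classical fact that a module is indecomposable if and only if its endomorphism ring contains no idempotents besides $0$ and $1$, and I anticipate no real obstruction.
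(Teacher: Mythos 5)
Your proof is correct, and it is the standard idempotent--decomposition correspondence that the paper itself relies on (the paper does not reprove the proposition here but cites it from \cite[Proposition 3.1]{EW3}). The two points you flag as the only genuine content --- that $\Im T_v=\Ker(I_v-T_v)$ is closed so the summands are honest Hilbert representations, and that the splitting $\varphi_v$ has the explicit bounded inverse $(\eta,\eta')\mapsto\eta+\eta'$ --- are indeed exactly what distinguishes the Hilbert-space statement from the purely algebraic one, and you handle both.
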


\noindent
{\bf Remark.}
The indecomposability of Hilbert representations of a quiver 
is an isomorphic invariant, but it is not a unitarily equivalent invariant.
Hence we cannot replace the set $Idem(H,f)$ of idempotents of endomorphisms by the subset of idempotents of endomorphisms which  consists of projections to show the indecomposability.

\noindent
{\bf Definition.}(\cite[page 569]{EW4})
A Hilbert representation $(H,f)$ of a quiver $\Gamma$ is said to be 
 {\it transitive} 
if $End(H,f) = {\mathbb C}I$. 
If a Hilbert representation $(H,f)$ of $\Gamma$ is 
transitive , then $(H,f)$ is indecomposable. In fact, since 
$End(H,f) = {\mathbb C}I$, any 
idempotent endomorphism $T$ is $0$ or $I$. In purely algebraic 
setting, a representation of a quiver is said to be a {\it brick} if 
its endomorphism ring is a division ring(see for example, cf.\cite{As}). 

\bigskip
Let $H$ be a Hilbert space and $E_1, \dots E_n$ be $n$ subspaces 
in $H$.  Then it is said that  ${\mathcal S} = (H;E_1, \dots , E_n)$  
is a system of $n$ subspaces in $H$.  
Let ${\mathcal T} = (K;F_1, \dots , F_n)$  
be  another system of $n$ subspaces in a Hilbert space $K$. 
Then we say that 
$\varphi : {\mathcal S} \rightarrow {\mathcal T}$ is a 
homomorphism if $\varphi : H \rightarrow K$ is a bounded linear 
operator satisfying that  
$\varphi(E_i) \subset F_i$ for $i = 1,\dots ,n$. We say that
$\varphi : {\mathcal S} \rightarrow {\mathcal T}$
is an isomorphism if $\varphi : H \rightarrow K$ is 
an invertible (i.e., bounded  bijective) linear 
operator satisfying that  
$\varphi(E_i) = F_i$ for $i = 1,\dots ,n$. 
It is said that systems ${\mathcal S}$ and ${\mathcal T}$ are 
{\it isomorphic} if there is an isomorphism  
$\varphi : {\mathcal S} \rightarrow {\mathcal T}$. This means 
that the relative positions of $n$ subspaces $(E_1, \dots , E_n)$ in $H$ 
and   $(F_1, \dots , F_n)$ in $K$ are same under disregarding angles. 
Let us denote by 
$Hom(\mathcal S, \mathcal T)$ the set of homomorphisms of 
$\mathcal S$ to $\mathcal T$ and  
$End(\mathcal S) := Hom(\mathcal S, \mathcal S)$ 
the set of endomorphisms on $\mathcal S$. 
Let ${\mathcal S} = (H;E_1, \dots , E_n)$ and 
 $\mathcal S^{\prime}=
 (H^{\prime};E_{1}^{\prime},\cdots,E_{n}^{\prime})$ be 
systems of $n$ subspaces in Hilbert spaces 
 $H$ and $H^{\prime}$.  Then their direct sum 
$\mathcal {S} \oplus\mathcal {S}^{\prime}$ is defined by 
\[
\mathcal {S} \oplus\mathcal {S}^{\prime}
:= (H\oplus H^{\prime};
E_{1}\oplus E_{1}^{\prime},\dots,E_{n}\oplus E_{n}^{\prime}).
\]

 A system $\mathcal S=(H;E_{1},\dots,E_{n})$
of $n$ subspaces is said to be {\it decomposable} 
if the system $\mathcal S$ is isomorphic to 
a direct sum of two non-zero systems.  
A non-zero system $\mathcal S=(H;E_{1},\cdots,E_{n})$ 
of $n$ subspaces is called 
{\it indecomposable} if it is not decomposable.

We recall that strongly irreducible operators $A$ play an extremely
important role to construct indecomposable systems of 
four subspaces. Moreover the commutant $\{A\}'$ corresponds to 
the endomorphism ring. 

For any single operator $A \in B(K)$ on a Hilbert space $K$, let
$\mathcal{S}_A = (H;E_1,E_2,E_3,E_4)$
be the associated  operator system such that 
$H = K \oplus K$ and 
\[
E_{1}=K\oplus 0,
E_{2}=0\oplus K, 
E_{3}=\{(x,Ax); x\in K\},
E_{4}=\{(y,y); y\in K\}. 
\]
It follows that
$$
End (\mathcal{S}_{A}) = \{ T \oplus T \in B(H) ; 
T \in B(K), \ AT = TA \}
$$
is isomorphic to the commutant $\{A\}^{\prime}$. 
The associated system $\mathcal{S}_A$ of four subspaces 
is indecomposable if and only if $A$ is strongly irreducible. 
Moreover for any operators $A,B  \in B(K)$ on a Hilbert space $K$, 
the associated systems $\mathcal{S}_A$ and $\mathcal{S}_B$ 
are isomorphic if and only if $A$ and $B$ are similar. 

Following after \cite{H} and \cite{HRR}, we \cite[page 272]{EW1} introduced a transitive system of subspaces.
A system ${\mathcal S}=(H;E_{1},E_{2},$ $\cdots,E_{n})$ of  $n$ subspaces
in a Hilbert space is called transitive 
if the endomorphism algebra is trivial, that is,
$$
End({\mathcal S})=\{A \in B(H) ; \ A(E_{i}) \subset E_{i} 
\text{ for any } i=1,2,\cdots,n\} = {\mathbb C}I.
$$

\section{\textbf{Unbounded strongly irreducible operators}.}
In this section we shall introduce 
unbounded strongly irreducible operators and transitive operators.
These operators are related to a certain class of indecomposable Hilbert representations of quivers on infinite-dimensional Hilbert spaces and four- subspace systems.
Let $H$ be a Hilbert space and $A$ a 
bounded linear operator on $H$.
We denote the image of $A$ by $\Im (A)$
and the graph of $A$ by $G(A)$, that is,
$G(A)=\{(x,Ax);x\in H\}$.
For elements $x,y\in H$,
we denote a rank one operator $\theta_{x,y}$
by $\theta_{x,y}(z)=(z\vert y)x$ for $z\in H$.
\noindent  

 P.R.Halmos \cite{H} initiated the study of transitive lattices.
A lattice ${\mathcal L}$ of subspaces 
of a Hilbert space $H$ containing $0$ and $H$ is  called a 
transitive lattice if 
$$
\{A \in B(H) ; \ AM \subset M 
\text{ for any } M \in {\mathcal L}\} = {\mathbb C}I.
$$
K.J.Harrison,H. Radjavi and P. Rosenthal (\cite{HRR}) 
constructed a transitive subspace lattice 
using an unbounded weighted shift as follows:
Let $K=\ell^2(\mathbb{Z})$ be a Hilbert space with an orthogonal basis
$\{e_{i}\}_{i=-\infty}^{+\infty}.$
Let $$w_{n}=1 \ \  (n\leq 0),\ \ w_{n}=exp((-1)^{n}n!) \ \ (n>0).$$
Let $T$ be the bilateral weighted shift 
defined by $Te_{n}=w_{n}e_{n+1}$,
with the domain  
$$D(T)=\{x=\sum_{i=-\infty}^{+\infty}
\alpha_{i}e_{i}
;
\sum_{i=-\infty}^{+\infty}
\vert\alpha_{i}w_{i}\vert^2<+\infty\}.$$
Put $E_{1}=K\oplus 0,
E_{2}=0\oplus K,
E_{3}=G(T),
E_{4}=\{(x,x);x\in K\}$.
Their transitive lattice 
is
$\mathcal{L}=\{0,H=K\oplus K
,E_{1},E_{2},E_{3},E_{4}\}$. 
See also a book Radjavi-Rosenthal \cite[4.7. page 78]{RR}. 

We \cite{EW4} considered a finite subspace lattice
as a Hilbert representation of a quiver $\Gamma$ as follows:
Let ${\mathcal L} = \{0,M_1,$ $M_2, \dots, M_n, H\}$
be a finite lattice.  Consider a $n$ subspace quiver $R_n=(V,E,s,r)$, 
that is, $V = \{1,2,\dots,n,n+1 \}$ and 
$E = \{\alpha_k ; \ k = 1, \dots, n \}$ with $s(\alpha_k) = k$ 
and $r(\alpha_k) = n+1$ for $k = 1, \dots, n$. 
Then there exists a Hilbert representation $(K,f)$ of $R_n$ such that 
$K_k = M_k$, $K_{n+1} = H$ and $f_{\alpha_k} : M_k \rightarrow H$ is an 
inclusion for  $k = 1, \dots, n$. The lattice ${\mathcal L}$ is 
transitive  if and only if the corresponding Hilbert representation 
$(K,f)$ is transitive. 
By this fact we may use the terminology "transitive" 
in the Hilbert representation case.


We recall some facts on strongly irreducible operators for
convenience.

\begin{lemma}
Let $A$ be a bounded operator on a Hilbert space $H$.
Then the following three conditions are equivalent:

\begin{itemize}
\item [(0)]
For any closed subspaces  $M$ and $N$ of $H$ with $H=M+N$ and $M\cap N=0$,
if $AM\subset M$ and $AN\subset N$, then $M=0$ or $N=0$.

\item [(1)] If $T\in B(H)$ is an idempotent in the commutant $\{A\}^{\prime}$ of $A$, then $T=0$ or $T=I$,
\item [(2)] 
If $T\in B(H) $ is an idempotent such that $(T\oplus T)(G(A))\subset G(A)$ , then $T=0$ or $T=I$.
 
\end{itemize}

\end{lemma}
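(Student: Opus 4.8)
The plan is to prove $(1)\Leftrightarrow(2)$ directly by unwinding the graph condition, and then $(0)\Leftrightarrow(1)$ via the standard correspondence between bounded idempotents on $H$ and decompositions of $H$ into a (possibly skew) topological direct sum of two closed subspaces. The equivalence $(0)\Leftrightarrow(1)$ can also be read off from the cited Proposition applied to the Hilbert representation of the Jordan quiver $L$ with operator $A$, whose endomorphism algebra is $\{A\}'$ and whose indecomposability is exactly $(0)$; but the direct argument is short and self-contained, so I would present that.

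For $(1)\Leftrightarrow(2)$: given a bounded idempotent $T$, the inclusion $(T\oplus T)(G(A))\subset G(A)$ means $(Tx,TAx)\in G(A)$ for every $x\in H$, i.e.\ $TAx=A(Tx)$ for all $x$, i.e.\ $TA=AT$. So the hypothesis imposed on $T$ in $(2)$ is exactly that $T$ is an idempotent in the commutant $\{A\}'$, and $(2)$ reduces verbatim to $(1)$.

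For $(0)\Rightarrow(1)$: let $T\in\{A\}'$ be an idempotent and set $M=\Im(T)$, $N=\Ker(T)$. Then $M=\Ker(I-T)$ and $N$ are closed, $H=M+N$ because $x=Tx+(I-T)x$, and $M\cap N=0$ because $x=Ty$ with $Tx=0$ gives $x=Ty=T^{2}y=Tx=0$. Since $TA=AT$ we get $AM\subset M$ (if $Tx=x$ then $TAx=ATx=Ax$) and $AN\subset N$ (if $Tx=0$ then $TAx=ATx=0$), so $(0)$ forces $M=0$ or $N=0$; these say $T=0$ or $T$ is injective, and an injective idempotent equals $I$ since $T(Tx-x)=0$ for all $x$. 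For $(1)\Rightarrow(0)$: given closed subspaces $M,N$ with $H=M+N$, $M\cap N=0$, $AM\subset M$, $AN\subset N$, let $T$ be the algebraic projection of $H$ onto $M$ along $N$; it is idempotent with $\Im(T)=M$, $\Ker(T)=N$, and it commutes with $A$ by checking separately on $M$ and on $N$ using their $A$-invariance. The crucial point is that $T$ is bounded, which follows from the closed graph theorem: if $x_{k}\to x$ and $Tx_{k}\to y$, then writing $x_{k}=m_{k}+n_{k}$ with $m_{k}\in M$, $n_{k}\in N$ gives $m_{k}=Tx_{k}\to y\in M$ and $n_{k}=x_{k}-m_{k}\to x-y\in N$, whence $Tx=y$. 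Thus $T$ is an idempotent in $\{A\}'$, and $(1)$ gives $T=0$ or $T=I$, that is, $M=0$ or $N=0$.

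The only step that is more than bookkeeping is the boundedness of the oblique projection in $(1)\Rightarrow(0)$, which rests on the closed graph theorem together with the closedness of \emph{both} $M$ and $N$; everything else is a routine verification of the idempotent/decomposition dictionary and of the commutation relations.
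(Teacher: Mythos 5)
Your proof is correct and follows essentially the same route as the paper: $(1)\Leftrightarrow(2)$ by unwinding the graph condition to $TA=AT$, and $(0)\Leftrightarrow(1)$ via the standard dictionary between bounded idempotents and topological direct sum decompositions $H=M+N$ with $M\cap N=0$. The only difference is that you spell out the closed graph theorem argument for the boundedness of the oblique projection, which the paper leaves implicit in the phrase ``there exists an idempotent $E$ such that $M=E(H)$ and $N=(I-E)H$.''
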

\begin{proof} 
Let  $M$ and $N$ be closed subspaces of $H$ such that $H=M+N$ and $M\cap N=0$, then there exists an idempotent $E$ such that $M=E(H)$ and
$N=(I-E)H$. Hence $(0)$ is equivalent to (1).
We shall show that $(1)$ is equivalent to (2).
Assume that (1) holds.
Let $T\in B(H)$ be an idempotent such that $(T\oplus T)(G(A))\subset G(A)$.
Then for any $x\in H$,
there exists $y\in H$ such that 
$(T\oplus T)((x,Ax))=(y,Ay)$.
Hence 
$Tx=y$ and $TAx=Ay$.
Thus $TA=AT$.
Hence $T\in \{A\}^{\prime}$. 
Since  $T$ is an idempotent 
, $T=0$ or $T=I$.
Hence (2) holds.
Next we assume that (2) holds.
Take an idempotent $T\in \{A\}^{\prime}\cap B(H)$.
Then $$(T\oplus T)((x,Ax))=(Tx,TAx)=(Tx,ATx).$$
Thus 
$(T\oplus T)(G(A))\subset G(A)$.
We have $T=0$ or $T=I$.
Hence (1) holds.
\end{proof}

\noindent
{\bf Definition.}
A bounded operator $A\in B(H)$ is said to be strongly irreducible if $A$ satisfies one of the three conditions of the above lemma.

Inspired by the example of K.J.Harrison,H. Radjavi and P. Rosenthal
we introduce unbounded strongly irreducible operators and
unbounded transitive operators.

\noindent 
{\bf Definition.}
Let $A$ be an unbounded closed operator on a Hilbert space $H$ with the domain $D(A)\subset H$.
We define  the (bounded) commutant $\{A\}^{\prime}$ of $A$  by

$\{A\}^{\prime}
=\{S\in B(H)$
$; S(D(A))\subset D(A) \text{ and,
for any } x\in D(A), ASx=SAx\}$.
See for example \cite[\S 17]{Ak}.
Let $A$ and $B$ be unbounded closed operators on $H$.
We say that $A$ and $B$ are similar if there exists a bounded invertible operator $T\in B(H)$ such that
$T(D(A))=D(B)$ and $B=TAT^{-1}$.
We say that $A$ is an orthogonal direct sum $A_{1}\oplus A_{2}$
of operators $A_{1}$ and $A_{2}$ on $H=H_{1}\oplus H_{2}$
if $D(A)=\{(x_{1},x_{2});x_{1}\in D(A_{1}),x_{2}\in D(A_{2})\}$
and $Ax=(A_{1}x_{1},A_{2}x_{2})$ for $x=(x_{1},x_{2})\in D(A).$

\begin{lemma}
Let $A$ be an unbounded closed operator on a Hilbert space $H$ with the domain $D(A)\subset H$.
Then the following three conditions are equivalent:

\begin{itemize}
\item[(0)]If $A$ is similar to $A_{1}\oplus A_{2}$
on $H=H_{1}\oplus H_{2}$ for some unbounded closed operators 
$A_{1}$ and $A_{2}$, then $H_{1}=0$ or $H_{2}=0$.
\item[(1)] For any idempotent $E\in B(H)$, if $E$ is in
 the commutant $\{A\}^{\prime}$,
then $E=0$ or $E=I$.

\item[(2)] For any idempotent $E\in B(H)$, if
$(E\oplus E)(G(A))\subset G(A)$,
 then $E=0$ or $E=I$.
\end{itemize}
\label{lemma:idem}
\end{lemma}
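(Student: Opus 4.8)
The plan is to follow the proof of the preceding lemma for bounded operators, inserting one new idea: a bounded idempotent in $\{A\}'$ yields a decomposition of $A$ not as an orthogonal direct sum but as an orthogonal direct sum \emph{up to similarity}, which is precisely what condition (0) records. First I would establish (1) $\Leftrightarrow$ (2), which is formally the same as in the bounded case: if $E \in B(H)$ is an idempotent with $(E \oplus E)(G(A)) \subset G(A)$, then applying $E \oplus E$ to $(x, Ax)$ for $x \in D(A)$ gives $(Ex, EAx) \in G(A)$, forcing $Ex \in D(A)$ and $A(Ex) = EAx$; hence $E(D(A)) \subset D(A)$ and $AEx = EAx$ on $D(A)$, i.e. $E \in \{A\}'$. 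Conversely an idempotent $E \in \{A\}'$ satisfies $(E \oplus E)(x, Ax) = (Ex, AEx) \in G(A)$, so $(E\oplus E)(G(A))\subset G(A)$. Thus (1) and (2) assert the same thing.

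Next, for (1) $\Rightarrow$ (0): I would suppose $A$ is similar to $B := A_1 \oplus A_2$ on $H = H_1 \oplus H_2$ via a bounded invertible $T$ with $T(D(A)) = D(B)$ and $B = TAT^{-1}$. Let $P \in B(H_1 \oplus H_2)$ be the orthogonal projection onto $H_1$; then $P(D(B)) \subset D(B)$ and $BP = PB$ on $D(B)$, so $P \in \{B\}'$, and $E := T^{-1}PT$ is a bounded idempotent on $H$ which a direct computation (using $T(D(A)) = D(B)$ and $B = TAT^{-1}$, so that $AT^{-1} = T^{-1}B$ on $D(B)$) shows lies in $\{A\}'$. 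By (1), $E \in \{0, I\}$, so $P \in \{0, I\}$, so $H_1 = 0$ or $H_2 = 0$.

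The substantive direction is (0) $\Rightarrow$ (1). Given an idempotent $E \in \{A\}'$ with $E \neq 0, I$, I would set $H_1 = \Im(E)$ and $H_2 = \Ker(E)$, two nonzero closed subspaces with $H = H_1 \dotplus H_2$ (an algebraic, not necessarily orthogonal, direct sum). Since $E \in \{A\}'$ forces $E(D(A)) \subset D(A)$ and $(I-E)(D(A)) \subset D(A)$, one gets $D(A) = D_1 \dotplus D_2$ with $D_i := D(A) \cap H_i$, and each $D_i$ is dense in $H_i$ because $E$ is bounded and $D(A)$ is dense. For $x_1 \in D_1$ one has $Ex_1 = x_1$, hence $Ax_1 = AEx_1 = EAx_1 \in H_1$, so $A_1 := A|_{D_1}$ is a well-defined operator from $D_1$ to $H_1$, and it is closed since $A$ is closed and $H_1$ is closed; likewise $A_2 := A|_{D_2}$. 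Now define $T : H \to H_1 \oplus H_2$ (external orthogonal direct sum) by $Tx = (Ex, (I-E)x)$; then $T$ is bounded and invertible with $T^{-1}(y_1,y_2) = y_1 + y_2$, $T(D(A)) = D(A_1 \oplus A_2)$, and $TAT^{-1}(x_1,x_2) = (A_1 x_1, A_2 x_2)$, so $A$ is similar to the orthogonal direct sum $A_1 \oplus A_2$ with both summands nonzero, contradicting (0).

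I expect the delicate point to be this last step: one must verify that the restrictions $A_i = A|_{D(A) \cap H_i}$ are genuine (densely defined, closed) operators, which relies on $E$ being \emph{bounded} and on the commutation relation guaranteeing $E(D(A)) \subset D(A)$; and one must notice that the possible non-orthogonality of $\Im(E)$ and $\Ker(E)$ is harmless, since (0) only requires a decomposition up to similarity, realized here by the graph-type similarity $T$.
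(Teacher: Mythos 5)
Your proof is correct and follows essentially the same route as the paper: the equivalence (1)$\Leftrightarrow$(2) via the graph computation, and the correspondence between bounded idempotents in $\{A\}^{\prime}$ and similarity decompositions realized by the invertible map $x\mapsto (Ex,(I-E)x)$ (the paper's $T$ with $T(E(H))=H_1$, $T((I-E)H)=H_2$ is exactly this, and its idempotent with $T^{-1}H_1=E(H)$ is your $T^{-1}PT$). The only cosmetic differences are that you argue (0)$\Rightarrow$(1) by contradiction rather than directly and you add an unneeded density remark; the substance is identical.
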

\begin{proof}
We shall show that (0)$\Rightarrow$(1).
Let $E\in \{A\}^{\prime}$ be an idempotent.
We have $E(D(A))\subset D(A)$ and
$AEx=EAx$ for $x\in D(A)$.
There exists an invertible operator
$T\in B(H)$ such that
$T(E(H))=H_{1}$ and $T((I-E)H)=H_{2}$
and
$H=H_{1}\oplus H_{2}$.
We define
${A_{1}}x=TAT^{-1}x=TAET^{-1}x$ for $x\in T(E(D(A)))\subset H_{1}$.
Since $E(D(A))\subset D(A)$,$A_{1}$ is well defined.
And ${A_{1}}$ is an operator from $T(E(D(A)))$
 to $H_{1}$ by $AEx=EAx$ for $x\in D(A)$.
We define ${A_{2}}x=TAT^{-1}x=TA(I-E)T^{-1}x$ for $x\in T((I-E)(D(A)))\subset H_{2}$.
Since $E(D(A))\subset D(A)$,$A_{2}$ is well defined.
And ${A_{2}}$ is an operator from $T((I-E)(D(A)))$
 to $H_{2}$ by $AEx=EAx$ for $x\in D(A)$.
Hence we have 
$$TAT^{-1}=
TAET^{-1}+TA(I-E)T^{-1}
={A_{1}}\oplus {A_{2}}.$$
Hence $A\cong {A_{1}}\oplus {A_{2}}$ on $H_{1}\oplus H_{2}$.
Since (0) holds, we have $H_{1}=0$ or $H_{2}=0$.
Hence $TE(H)=0$ or $T(I-E)(H)=0$.
So $E=0$ or $E=I$.
Thus we have 
(0)
$\Rightarrow$(1).
Conversely we shall show that (1)$\Rightarrow$(0).
Assume that
$A\cong {A_{1}}\oplus {A_{2}}$ on $H_{1}\oplus H_{2}$ for some unbounded closed operators 
$A_{1}$ and $A_{2}$.
There exists an invertible operator $T\in B(H)$ such that
$TAT^{-1}x= ({A_{1}}\oplus {A_{2}})x$ for $x\in D( {A_{1}}\oplus {A_{2}})=T(D(A))$.
There exists an idempotent $E\in B(H)$
such that $T^{-1}H_{1}=E(H)$ and $T^{-1}H_{2}=(I-E)H$.
We shall show that
$E(D(A))\subset D(A)$ and $AE=EA$ on $D(A)$.
We have
$T^{-1}D(A_{1})\subset T^{-1}H_{1}=EH$
and
$T^{-1}D(A_{2})\subset T^{-1}H_{2}=(I-E)H.$
$D(A)
=T^{-1}D({A_{1}}\oplus {A_{2}})
=T^{-1}D({A_{1}})+T^{-1}D({A_{2}})$.
$$E(D(A))
=E(T^{-1}D({A_{1}})+T^{-1}D({A_{2}}))
=T^{-1}D({A_{1}})$$
$$\subset T^{-1}D({A_{1}})+T^{-1}D({A_{2}})=D(A).$$
For $x\in D(A),
x=x_{1}+x_{2},x_{1}\in T^{-1}D({A_{1}}),
x_{2}\in T^{-1}D({A_{2}})$,
we have 
$AEx=(T^{-1} ({A_{1}}\oplus {A_{2}})T)E(x_{1}+x_{2})
=(T^{-1} ({A_{1}}\oplus {A_{2}})T)x_{1}
=T^{-1}{A_{1}}Tx_{1}.$
And $EAx=E(T^{-1} ({A_{1}}\oplus {A_{2}})T)(x_{1}+x_{2})
=E(T^{-1}{A_{1}}Tx_{1}+ T^{-1}{A_{2}}Tx_{2})
=T^{-1}{A_{1}}Tx_{1}.$
Thus we have $AE=EA$ on $D(A)$.
Therefore $E=0$ or $E=I$.
Hence $H_{1}=0$ or $H_{2}=0$.
Next, we shall show that (1)
$\Rightarrow$(2).
Let $E\in B(H)$ be an idempotent such that 
$(E\oplus E)(G(A))\subset G(A)$.
Then for any $x\in D(A)$,
there exists $y\in D(A)$ such that
$(E\oplus E)(x,Ax)=(y,Ay)$.
Hence 
$$(Ex,EAx)=(y,Ay)=(Ex,AEx).$$
Thus $E\in \{A\}^{\prime}$.
By (1),
then $E=0$ or $E=I$.
Conversely,we shall show that (2)
$\Rightarrow$(1).
Let $E\in \{A\}^{\prime}$ be an idempotent.
Hence $E(D(A))\subset D(A)$
and
$EAx=AEx$ for $x\in D(A)$.
$$(E\oplus E)((x,Ax))=(Ex,EAx)=(Ex,AEx).$$
Hence 
$(E\oplus E)(G(A))\subset G(A)$.
then $E=0$ or $E=I$.
\end{proof}
\noindent
{\bf Definition.}
An  unbounded closed operator $A$ is said to be strongly irreducible if $A$ satisfies  one of the three conditions of the above lemma.
 The next lemma is proved similarly.

\begin{lemma}
Let $A$ be an unbounded closed operator on a Hilbert space $H$ with the domain $D(A)\subset H$.
Then the following two conditions are equivalent:
\begin{itemize}
\item[(1)] For any $T\in B(H)$, if $T$ is in
 the commutant $\{A\}^{\prime}$,
then $T$ is a scalar operator.

\item[(2)] For any $T\in B(H)$, if
$(T\oplus T)(G(A))\subset G(A)$, then $T$ is a scalar operator.
\end{itemize}
\label{transitive operator}
\end{lemma}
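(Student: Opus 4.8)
The plan is to reduce everything to a single set-theoretic identity, exactly the one already used inside the proof of Lemma~\ref{lemma:idem}, and to observe that the idempotent hypothesis played no role there. Concretely, I claim that for an arbitrary $T\in B(H)$ one has
\[
(T\oplus T)(G(A))\subset G(A)
\quad\Longleftrightarrow\quad
T\in\{A\}^{\prime},
\]
i.e. $T(D(A))\subset D(A)$ and $ATx=TAx$ for all $x\in D(A)$. Granting this, the equivalence of (1) and (2) is immediate: if (1) holds and $T$ satisfies $(T\oplus T)(G(A))\subset G(A)$, then $T\in\{A\}^{\prime}$ by the identity, hence $T$ is scalar by (1), so (2) holds; conversely, if (2) holds and $T\in\{A\}^{\prime}$, then $(T\oplus T)(G(A))\subset G(A)$ by the identity, hence $T$ is scalar by (2), so (1) holds.

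For the direction ``$\Leftarrow$'' of the claim, suppose $T(D(A))\subset D(A)$ and $ATx=TAx$ on $D(A)$. Take a typical element $(x,Ax)\in G(A)$ with $x\in D(A)$. Then $Tx\in D(A)$ and
\[
(T\oplus T)(x,Ax)=(Tx,TAx)=(Tx,A(Tx))\in G(A),
\]
so $(T\oplus T)(G(A))\subset G(A)$. For ``$\Rightarrow$'', suppose $(T\oplus T)(G(A))\subset G(A)$. Given $x\in D(A)$, the pair $(Tx,TAx)=(T\oplus T)(x,Ax)$ lies in $G(A)$; by the very definition of a graph this forces $Tx\in D(A)$ and $A(Tx)=TAx$. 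Since $x\in D(A)$ was arbitrary, $T(D(A))\subset D(A)$ and $AT=TA$ on $D(A)$, that is, $T\in\{A\}^{\prime}$. (The only point to be slightly careful about is reading off membership in $G(A)$: an element with first coordinate $Tx$ that lies in $G(A)$ is of the form $(z,Az)$ with $z\in D(A)$ and $z=Tx$, because $G(A)$ is the graph of a single-valued map; hence necessarily $Az=A(Tx)$.)

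I do not expect any real obstacle here — this is the verbatim ``(1)$\Leftrightarrow$(2)'' argument from Lemma~\ref{lemma:idem} with the restriction ``$E$ idempotent'' simply deleted, and it uses neither idempotency of $T$ nor closedness of $A$. The statement is therefore best presented as a one-paragraph remark: establish the displayed set identity once, then quote it twice. If one prefers a fully symmetric write-up, one can phrase the identity as ``$(T\oplus T)(G(A))\subset G(A)\iff T\in\{A\}^{\prime}$'' as a standalone sub-claim and then note that (1) and (2) are two instances of the same quantified implication over the same class of $T$'s, differing only by this equivalent reformulation of the hypothesis.
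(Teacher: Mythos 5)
Your proposal is correct and is exactly what the paper intends: the paper states only that this lemma ``is proved similarly'' to Lemma~\ref{lemma:idem}, and the similar proof is precisely the (1)$\Leftrightarrow$(2) argument there with the idempotency hypothesis deleted, which is what you carry out via the identity $(T\oplus T)(G(A))\subset G(A)\iff T\in\{A\}^{\prime}$. Your observation that neither idempotency of $T$ nor closedness of $A$ is used is accurate.
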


\noindent
{\bf Definition.}
An unbounded closed operator $A$ is said to be 
transitive if
$A$ satisfies one of the two conditions 
of the above lemma.

If an unbounded closed operator $A$ is transitive,
then $A$ is strongly irreducible.
Any bounded strongly irreducible operator $A$ on a Hilbert space 
$H$ with $\dim H\geq 2$ is not transitive,
because $A\in \{A\}^{\prime}$.

By the same argument we have the following lemma.

\begin{lemma}
Let $A$ be an unbounded closed operator on a Hilbert space $K$ with the domain $D(A)$.
Let 
 ${\mathcal S}_{A}
=(H;E_{1},E_{2},E_{3},E_{4})$
be a four-subspace system
such that
$H=K\oplus K,
E_{1}=K\oplus 0,
E_{2}=0\oplus K,
E_{3}=\{(x,Ax);x\in D(A)\},
E_{4}=\{(x,x);x\in K\}.$
Then 
${\mathcal S}_{A}$ is transitive 
if and only if $A$ is transitive.
\label{lemma:transitive}
\end{lemma}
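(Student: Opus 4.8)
The plan is to identify $End(\mathcal{S}_A)$ explicitly with the set of bounded operators $T$ on $K$ satisfying $(T\oplus T)(G(A))\subset G(A)$, exactly as in the bounded case recalled in Section~3, and then to invoke Lemma~\ref{transitive operator}.

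First I would take an arbitrary $\Phi\in End(\mathcal{S}_A)$, that is, $\Phi\in B(K\oplus K)$ with $\Phi(E_i)\subset E_i$ for $i=1,2,3,4$, and write $\Phi$ as an operator matrix $\left(\begin{smallmatrix} T_{11} & T_{12} \\ T_{21} & T_{22}\end{smallmatrix}\right)$ with $T_{ij}\in B(K)$. The condition $\Phi(E_1)\subset E_1$ gives $\Phi(x,0)=(T_{11}x,T_{21}x)\in K\oplus 0$ for all $x$, hence $T_{21}=0$; similarly $\Phi(E_2)\subset E_2$ gives $T_{12}=0$, so $\Phi=T_{11}\oplus T_{22}$ is diagonal. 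Next, $\Phi(E_4)\subset E_4$ says $(T_{11}x,T_{22}x)\in\{(y,y);y\in K\}$ for every $x\in K$, so $T_{11}=T_{22}=:T$. Thus $\Phi=T\oplus T$ for a unique $T\in B(K)$, and conversely every $T\oplus T$ preserves $E_1,E_2,E_4$. The last condition $\Phi(E_3)\subset E_3$ reads: for every $x\in D(A)$, $(Tx,TAx)=(z,Az)$ for some $z\in D(A)$, i.e. $Tx\in D(A)$ and $TAx=ATx$; this is precisely $(T\oplus T)(G(A))\subset G(A)$. Since $A$ is closed, $G(A)=E_3$ is a closed subspace of $H$, so $\mathcal{S}_A$ really is a four-subspace system and the matrix bookkeeping is legitimate. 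Moreover $T\mapsto T\oplus T$ is an injective algebra homomorphism, so $End(\mathcal{S}_A)=\mathbb{C}(I\oplus I)=\mathbb{C}I_H$ if and only if the only $T\in B(K)$ with $(T\oplus T)(G(A))\subset G(A)$ are the scalar operators.

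Finally I would quote Lemma~\ref{transitive operator}: its condition (2), that every $T\in B(K)$ with $(T\oplus T)(G(A))\subset G(A)$ is a scalar operator, is equivalent to its condition (1), that $\{A\}^{\prime}$ consists of scalar operators, which is the definition of $A$ being transitive. Combining this with the identification of the previous paragraph yields $\mathcal{S}_A$ transitive $\iff$ $A$ transitive. The argument is essentially a matrix computation with $E_1,\dots,E_4$; the unboundedness of $A$ enters only in the passage between $\Phi(E_3)\subset E_3$ and $(T\oplus T)(G(A))\subset G(A)$, which is a literal rewriting of the same set inclusion and raises no domain difficulties. The only (mild) point needing care is checking that preserving $E_1$ and $E_2$ forces $\Phi$ to be diagonal and that $E_4$ then forces the two diagonal entries to coincide; this and the translation to Lemma~\ref{transitive operator} are the whole content, so I do not anticipate a genuine obstacle.
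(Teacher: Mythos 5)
Your proposal is correct and follows essentially the same route the paper intends: the paper dispatches this lemma with ``by the same argument,'' meaning exactly the computation you carry out --- invariance of $E_{1},E_{2},E_{4}$ forces an endomorphism to have the form $T\oplus T$, invariance of $E_{3}=G(A)$ becomes $(T\oplus T)(G(A))\subset G(A)$, and Lemma~\ref{transitive operator} converts this into the transitivity of $A$. Your write-up simply makes explicit the details the paper leaves to the reader, with no gaps.
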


We shall construct transitive operators using transitive Hilbert representations and quotients of operators.

\noindent
{\bf Definition.}
Let $A$ and $B$ be bounded linear operators on a Hilbert space ${H}$.
We say that $B(A\vert_{\Ker(A)^{\perp}})^{-1}$ is a quotient 
of $B$ by $A$. We denote $(A\vert_{\Ker(A)^{\perp}})^{-1}$ 
briefly by $A^{-1}$.
If we have an additional condition such that  $\ker A\subset \ker B$,
then the quotient is the mapping $Ax\mapsto Bx,x\in {H}.$
In \cite{Kau}, Kaufman showed the
following useful result about quotient operators. 
\begin{thm}\cite[Theorem 1,page 531]{Kau}
Let $T$ be an unbounded operator on a Hilbert space $H$.
Then  $T$ is a closed operator if and only if 
$T=B(A\vert_{\Ker(A)^{\perp}})^{-1}$ for some $A,B\in B(H)$ such 
that $\Im (A^{*})+\Im(B^{*})$ is closed in $H$.

\label{thm,Kaufman}
\end{thm}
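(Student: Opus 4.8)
The plan is to route everything through the graph $G(T)=\{(x,Tx);x\in D(T)\}\subset H\oplus H$ and, for bounded $A,B\in B(H)$, the column operator $W=\binom{A}{B}\colon H\to H\oplus H$, $Wx=(Ax,Bx)$. Its adjoint is $W^{*}(u,v)=A^{*}u+B^{*}v$, so $\Im(W^{*})=\Im(A^{*})+\Im(B^{*})$. The only external tool I would use is the closed range theorem: for a bounded operator between Hilbert spaces the range is closed if and only if the range of the adjoint is closed. Hence the hypothesis ``$\Im(A^{*})+\Im(B^{*})$ is closed'' is precisely ``$\Im(W)$ is closed'', and the whole statement reduces to relating $\Im(W)$ to $G(T)$.

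For the ``only if'' direction I would assume $T$ is closed, so $G(T)$ is a closed subspace of $H\oplus H$. Since $\dim G(T)\le\dim(H\oplus H)=\dim H$ (an unbounded operator already forces $H$ to be infinite-dimensional), I can choose a bounded $W\colon H\to H\oplus H$ with $\Im(W)=G(T)$ --- for instance the composition of a coisometry $H\to G(T)$ with the inclusion $G(T)\hookrightarrow H\oplus H$ --- and write $W=\binom{A}{B}$ with $A,B\in B(H)$. I would first note $\Ker A\subset\Ker B$: if $Ax=0$ then $(0,Bx)=Wx\in G(T)$ forces $Bx=T0=0$. Hence $\Ker A=\Ker A\cap\Ker B=\Ker W$, so $W(\Ker A^{\perp})=W(\Ker W^{\perp})=\Im(W)=G(T)$; reading off first coordinates gives $D(T)=\Im A$, and for $y=Ax_{0}$ with $x_{0}\in\Ker A^{\perp}$ the relation $(Ax_{0},Bx_{0})=Wx_{0}\in G(T)$ shows $Ty=Bx_{0}=B(A|_{\Ker A^{\perp}})^{-1}y$. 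Thus $T=B(A|_{\Ker A^{\perp}})^{-1}$, and $\Im(A^{*})+\Im(B^{*})=\Im(W^{*})$ is closed because $\Im(W)=G(T)$ is.

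For the ``if'' direction, given $A,B\in B(H)$ with $T=B(A|_{\Ker A^{\perp}})^{-1}$ and $\Im(A^{*})+\Im(B^{*})$ closed, I would again set $W=\binom{A}{B}$. Then $\Im(W^{*})=\Im(A^{*})+\Im(B^{*})$ is closed, so $\Im(W)$ is closed, and the continuous bijection $W|_{\Ker W^{\perp}}\colon\Ker W^{\perp}\to\Im(W)$ is a topological isomorphism by the bounded inverse theorem. Since $\Ker W=\Ker A\cap\Ker B\subset\Ker A$, the space $\Ker A^{\perp}$ is a closed subspace of $\Ker W^{\perp}$, so $W(\Ker A^{\perp})$ is closed in $\Im(W)$, hence in $H\oplus H$; and $W(\Ker A^{\perp})=\{(Ax_{0},Bx_{0});x_{0}\in\Ker A^{\perp}\}$ equals $G(T)$ by the definition of the quotient. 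Therefore $G(T)$ is closed, i.e.\ $T$ is closed.

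The calculations above are all routine; the step I expect to carry the real content is the ``only if'' direction --- manufacturing a bounded $W$ whose range is \emph{exactly} $G(T)$, and then checking that the quotient built from its columns $A,B$ recovers $T$ on \emph{all} of $D(T)$ rather than on a proper restriction. That is precisely where the automatic inclusion $\Ker A\subset\Ker B$ and careful bookkeeping of domains (to get $\Im A=D(T)$, not something smaller) are needed.
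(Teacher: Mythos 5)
Your proof is correct, but there is nothing in the paper to compare it against: the paper states this result purely as a citation of Kaufman \cite{Kau} and gives no proof of its own. What you have written is a self-contained argument, and it is in fact built on exactly the device the paper does record elsewhere, namely the Remark following Proposition \ref{Prop,surjective}: for $W=\binom{A}{B}$ one has $\Im(W^{*})=\Im(A^{*})+\Im(B^{*})$, and by the closed range theorem this is closed iff $\{(Ax,Bx);x\in H\}$ is closed. The two points in your write-up that carry the weight, and which you handle correctly, are (i) in the ``only if'' direction, the existence of a bounded $W$ with $\Im(W)$ \emph{equal} to $G(T)$ (a coisometry onto $G(T)$ exists because $T$ unbounded forces $H$ infinite-dimensional, so $\dim G(T)\le\dim(H\oplus H)=\dim H$), together with the observation $\Ker A\subset\Ker B$ which guarantees $\Im A=D(T)$ and not a proper subspace; and (ii) in the ``if'' direction, the distinction between $\Im(W)$ and $G(T)=W(\Ker A^{\perp})$ when $\Ker A\cap\Ker B\ne 0$, which you resolve by noting that $W|_{\Ker W^{\perp}}$ is a topological isomorphism onto the closed range, so it carries the closed subspace $\Ker A^{\perp}\subset\Ker W^{\perp}$ to a closed set. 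For what it is worth, Kaufman's own construction in the densely defined case goes through the specific choice $A=(I+T^{*}T)^{-1/2}$, $B=TA$; your argument avoids any use of $T^{*}$ and so does not need $D(T)$ dense, which matches the generality in which the paper quotes the theorem.
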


We show that  there is a non-zero
surjective algebra homomorphism of the endomorphism algebra of a Hilbert representation of the Kronecker quiver to the endomorphism algebra of a four-subspace system. 
The Kronecker quiver $Q$ is a quiver with two vertices $\{1,2\}$ and 
two paralleled arrows $\{\alpha, \beta\}$:
$$
Q : 1 ^{\overset {\alpha}{\longrightarrow}}
_{\underset {\beta}{\longrightarrow}} 2
$$
A Hilbert representation $(H,f)$ of 
the Kronecker quiver is given 
by two Hilbert spaces $H_1$, $H_2$ and two bounded operators 
$f_{\alpha}, f_{\beta}: H_1 \rightarrow H_2$. 

\begin{prop}
Let $K\ne0$ be a Hilbert space and $A,B\in B(K)$.
Let $(H,f)$ be a Hilbert representation of the Kronecker quiver $Q$ such that
$H_{1}=H_{2}=K,$
$f_{\alpha}=A$ and $f_{\beta}=B$.
Let $\mathcal S =(E_{0};E_{1},E_{2},
E_{3},E_{4})$ be a four-subspace system 
such that $E_{0}=K\oplus K,E_{1}=K\oplus 0,
E_{2}=0\oplus K,
E_{3}=\{(Ax,Bx);x\in K\},
E_{4}=\{(x,x);x\in K\}$.
Assume that $E_{3}$ is closed.
Then there exists a non-zero
surjective algebra homomorphism $\Phi$
of $End(H,f)$ to $End(\mathcal S)$.
Moreover, if $\ker A \cap \ker B = 0$, then $\Phi$ is one to one. 
\label{Prop,surjective}
\end{prop}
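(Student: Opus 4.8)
The approach is to make both endomorphism algebras explicit and then read off the homomorphism. First I would unwind $End(H,f)$: an endomorphism of the Kronecker representation $(H,f)$ is a pair $T=(T_1,T_2)$ with $T_1,T_2\in B(K)$ satisfying the two intertwining relations $T_2A=AT_1$ and $T_2B=BT_1$ coming from the arrows $\alpha,\beta$. Next I would unwind $End(\mathcal S)$: if $S\in B(E_0)$ preserves $E_1=K\oplus 0$ and $E_2=0\oplus K$, then $S$ is block-diagonal; if in addition it preserves $E_4=\{(x,x);x\in K\}$, the two diagonal blocks coincide, so $S=S_0\oplus S_0$ for a single $S_0\in B(K)$; and then the only remaining requirement is $(S_0\oplus S_0)(E_3)\subset E_3$. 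Since the two summands of $E_0$ are two copies of $H_2$, the natural candidate is $\Phi(T_1,T_2):=T_2\oplus T_2$.

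Granting this, I would check that $\Phi$ is well defined: $T_2\oplus T_2$ clearly preserves $E_1,E_2,E_4$, and on $E_3$ we have $(T_2\oplus T_2)(Ax,Bx)=(T_2Ax,T_2Bx)=(AT_1x,BT_1x)\in E_3$ by the intertwining relations, so $T_2\oplus T_2\in End(\mathcal S)$. That $\Phi$ is a unital algebra homomorphism is then immediate from $\Phi((T_1,T_2)(T_1',T_2'))=\Phi(T_1T_1',T_2T_2')=(T_2T_2')\oplus(T_2T_2')=\Phi(T_1,T_2)\,\Phi(T_1',T_2')$ and $\Phi(I,I)=I$; in particular $\Phi\ne 0$ because $K\ne 0$.

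The substantive step is surjectivity, and this is where the hypothesis that $E_3$ is closed is used. Put $N:=\ker A\cap\ker B$, a closed subspace, and let $\psi\colon K\to E_0$ be $\psi(x)=(Ax,Bx)$, so $\psi(K)=E_3$ and $\ker\psi=N$; then $\psi$ restricts to a bounded bijection $N^\perp\to E_3$, and since $E_3$ is closed, hence a Hilbert space, the open mapping theorem gives a bounded inverse $R\colon E_3\to N^\perp$. Given $S_0\oplus S_0\in End(\mathcal S)$, I would set $T_2:=S_0$ and $T_1:=R\circ(S_0\oplus S_0)\circ\psi$; this composition is legitimate and bounded because $S_0\oplus S_0$ carries $E_3$ into $E_3$, and applying $\psi$ yields $\psi(T_1x)=(S_0\oplus S_0)\psi(x)$, i.e. $AT_1=S_0A=T_2A$ and $BT_1=S_0B=T_2B$. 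Hence $(T_1,T_2)\in End(H,f)$ and $\Phi(T_1,T_2)=S_0\oplus S_0$, proving surjectivity.

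For the final clause, assume $\ker A\cap\ker B=0$ and $\Phi(T_1,T_2)=0$, i.e. $T_2=0$; then $AT_1=T_2A=0$ and $BT_1=T_2B=0$, so $T_1x\in\ker A\cap\ker B=0$ for every $x$, whence $T_1=0$ and $\Phi$ is injective. The only point where more than bookkeeping with the intertwining relations is needed is the open mapping theorem in the surjectivity step, which I expect to be the main (and only genuine) obstacle.
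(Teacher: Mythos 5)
Your proposal is correct and follows essentially the same route as the paper: the same homomorphism $\Phi(T_1,T_2)=T_2\oplus T_2$, the same identification of elements of $End(\mathcal S)$ as $S_0\oplus S_0$, the same recovery of $T_1$ as the $(\ker A\cap\ker B)^{\perp}$-component, and the same injectivity argument. The only (inessential) difference is that you obtain boundedness of $T_1$ from the open mapping theorem applied to $\psi|_{N^{\perp}}:N^{\perp}\to E_3$, whereas the paper defines $T_1$ pointwise and invokes the closed graph theorem; your version has the mild advantage of making explicit where the closedness of $E_3$ is used.
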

\begin{proof}
Let $(S,T)$ be in $End(H,f)$.
We have $AS=TA$ and $BS=TB$.
Since $(T\oplus T)(Ax,Bx)
=(TAx,TBx)=(ASx,BSx),$
hence $(T\oplus T)(E_{3})\subset E_{3}$.
Clearly $(T\oplus T)(E_{i})\subset E_{i}$
for $i=1,2,4$.
Thus we have that $T\oplus T$ is in $End(\mathcal S)$.
We define a mapping $\Phi$
of $End(H,f)$ to $End(\mathcal S)$
by $\Phi(S,T)=T\oplus T.$
The map $\Phi$
is an algebra homomorphism.
We shall show that the map $\Phi$
is onto.

Take $C
\in End(\mathcal S)$.
Then
there exists $T\in B(K)$ such that $C=(T\oplus T)$.
We have that
$$(T\oplus T)\{(Ax,Bx);x\in K\}
\subset \{(Ay,By);y\in K\}.$$
Hence, for any $x\in K$,
there exists $y\in K$
such that
$TAx=Ay$ and $TBx=By$.
We put $L_{0}=\ker A
\cap \ker B$ and
$L_{1}=L_{0}^{\perp}\cap K$.
By a decomposition of 
$y$ such that  $y=y_{0}+y_{1},y_{0}\in L_{0},y_{1}\in L_{1}$,
we have
$TAx=Ay_{1},TBx=By_{1}$.
We define an operator 
$S$ by
$Sx=y_{1}$.
We shall show that $S$ is well defined.
If there exists another $y^{\prime}=
y^{\prime}_{0}+y^{\prime}_{1}\in K$
for $y^{\prime}_{0}\in L_{0}$ and 
$y^{\prime}_{1}\in L_{1}$ 
such that 
$TAx=Ay^{\prime}=Ay^{\prime}_{1}$ and $TBx=By^{\prime}=By^{\prime}_{1}$.
We have
$Ay_{1}=Ay^{\prime}_{1}$
and
$By_{1}=By^{\prime}_{1}$.
Hence
$y_{1}-y^{\prime}_{1}
\in (\ker A\cap\ker B)=L_{0}$.
We also have
$y_{1}-y^{\prime}_{1}
\in L_{1}$.
Hence 
$y_{1}-y^{\prime}_{1}
\in L_{0}\cap L_{1}=(0)$.
So $y_{1}=y^{\prime}_{1}$.
Thus $S$ is well defined.
Clearly $S$ is  linear.
We shall show that $S$ is a closed operator.
Assume that 
$x_{n}\to x$
and $Sx_{n}=y_{n,1}\to y_{1},$ for $x_{n},x\in K$ and $y_{n,1},y_{1}\in L_{1}.$
Since $Sx_{n}=y_{n,1}$,
we have
that $TAx_{n}=Ay_{n,1}\to Ay_{1}$ and $TBx_{n}=By_{n,1}\to By_{1}.$
If $n\to \infty$,
then $$TAx=Ay_{1}\text{ and }TBx=By_{1}.$$
It follows that $Sx=y_{1}$.
Therefore $S$ is closed.
Hence $S$ is bounded.

Since $TAx=Ay_{1}=ASx$ and $TBx=By_{1}=BSx$
for $x\in K$ and $y_{1}\in L_{1}$,
 we have that 
$$TA=AS \text{ and }TB=BS.$$
Hence $(S,T)\in End(H,g)$.
And $\Phi(S,T)=T\oplus T$.
Hence $\Phi$
is surjective.
We shall show that if $\ker A \cap \ker B = 0$, then $\Phi$ is one to one.
Suppose that  $\Phi(S,T)=T\oplus T=0$ for $(S,T)\in End(H,f)$.
Then $T=0$.
We have that
for any $x\in K$,
$$ASx=TAx=0,BSx=TBx=0.$$
Hence $Sx\in \ker A\cap \ker B=0$.
Since $Sx=0$ for any $x\in K$,
we have $S=0$.
Thus $(S,T)=0$. Therefore $\Phi$ is one to one.

\end{proof}
\noindent
{\bf {Remark.}}
Let $K$ be a Hilbert space and $A,B\in B(K)$.
We consider $$Z=
\begin{pmatrix}
  A \\
  B \\
\end{pmatrix}
:K\to K\oplus K \text{ and }
Zx=(Ax,Bx) \text{ for } x\in K.$$
We have 
$$Z^*=(A^*,B^*):K\oplus K\to K
\text{ and }
Z^*\begin{pmatrix}
  x \\
  y \\
\end{pmatrix}
=A^*x+B^*y \text{ for } x,y\in K.$$
Since $\Im(Z)$ is closed if and only if $\Im(Z^*)$ is closed,
we have that
$$\{(Ax,Bx);x\in K\} \text { is closed
if and only if } \Im(A^*)+\Im(B^*)\text{ is closed}.$$

\noindent
{\bf {Remark.}}
The map $\Phi$ is not one-one in general.
We shall give an example $\Phi$ which is not one to one.
Let $K$ be a Hilbert space and
$A,B$ be operators on $K\oplus K$ such that
$A=B=
\begin{bmatrix} 
1 & 0 \\ 
0 & 0 \\ 
\end{bmatrix} .$
Let $S_{1},T_{1},S_{2},T_{2}$ be operators on $K\oplus K$ such that  
$S_{1}
=
\begin{bmatrix} 
0 & 0 \\ 
1 & 0 \\ 
\end{bmatrix},
T_{1}
=
\begin{bmatrix} 
0 & 0 \\ 
0 & 1 \\ 
\end{bmatrix}$
and 
$S_{2}
=
\begin{bmatrix} 
0 & 0 \\ 
0 & 1 \\ 
\end{bmatrix},
T_{2}
=
\begin{bmatrix} 
0 & 0 \\ 
0 & 1 \\ 
\end{bmatrix}$.
Then
$(S_{1},T_{1})$ and $(S_{2},T_{2})$ are in $End(H,f)$.
And $(S_{1},T_{1})$ and $(S_{2},T_{2})$
give the same endomorphism 
$T_{1}\oplus T_{1}$ of $\mathcal S$.
Thus $\Phi$ is not one to one.

Under a certain condition we have a correspondence
between transitive Hilbert representations of the Kronecker quiver 
and transitive operators. 
\begin{prop}
Let $K$ 
be a Hilbert space and $A,B\in B(K)$.
Assume that
$\ker A=0
\text{ and } \Im A^{*}+\Im B^{*}$  is closed in $K.$
Let $(H,f)$ be a Hilbert representation of the Kronecker quiver $Q$ such that
$H_{1}=H_{2}=K,$
$f_{\alpha}=A$ and $f_{\beta}=B$.
Then 
  $BA^{-1}$ is transitive  if and only if
 $(H,f)$ is transitive.
\label{unbounded:transitive} 
\end{prop}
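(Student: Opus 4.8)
The plan is to identify both sides of the equivalence with the triviality of the endomorphism algebra of one and the same four-subspace system, using Lemma \ref{lemma:transitive} on the operator side and Proposition \ref{Prop,surjective} on the representation side. First I would set $T := BA^{-1} = B(A|_{\Ker(A)^{\perp}})^{-1}$. Since $\Ker A = 0$ we have $A|_{\Ker(A)^{\perp}} = A$, so $D(T) = \Im A$ and $T(Ax) = Bx$ for all $x\in K$; by Kaufman's theorem (Theorem \ref{thm,Kaufman}) the hypothesis that $\Im A^{*} + \Im B^{*}$ is closed is exactly what makes $T$ a closed operator. Its graph is
\[
G(T) = \{(Ax,\,T(Ax)) : x\in K\} = \{(Ax,Bx) : x\in K\},
\]
which is closed by the Remark following Proposition \ref{Prop,surjective} (again because $\Im A^{*}+\Im B^{*}$ is closed). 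Hence the four-subspace system $\mathcal S_{T} = (H;E_{1},E_{2},E_{3},E_{4})$ attached to $T$ in Lemma \ref{lemma:transitive}, namely $H = K\oplus K$, $E_{1}=K\oplus 0$, $E_{2}=0\oplus K$, $E_{3}=G(T)=\{(Ax,Bx):x\in K\}$, $E_{4}=\{(x,x):x\in K\}$, is literally the system $\mathcal S$ of Proposition \ref{Prop,surjective}. By Lemma \ref{lemma:transitive}, $BA^{-1}$ is transitive if and only if $\mathcal S$ is transitive, i.e.\ $End(\mathcal S) = \mathbb C I$.

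Next I would invoke Proposition \ref{Prop,surjective}: since $E_{3}$ is closed and $\Ker A\cap\Ker B = 0$ (as $\Ker A = 0$), the algebra homomorphism $\Phi : End(H,f)\to End(\mathcal S)$, $\Phi(S,T_{0}) = T_{0}\oplus T_{0}$, is both surjective and injective, hence an algebra isomorphism. It carries $(I_{K},I_{K})$ to $I_{K}\oplus I_{K}$, so it restricts to a bijection between $\mathbb C I\subset End(H,f)$ and $\mathbb C I\subset End(\mathcal S)$; since moreover $\Phi$ is surjective, $End(H,f) = \mathbb C I$ holds if and only if $End(\mathcal S) = \mathbb C I$, that is, $(H,f)$ is transitive if and only if $\mathcal S$ is transitive. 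Chaining the two equivalences gives
\[
(H,f)\ \text{transitive} \iff End(\mathcal S) = \mathbb C I \iff \mathcal S_{T}\ \text{transitive} \iff BA^{-1}\ \text{transitive},
\]
which is the assertion.

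I do not expect a genuinely hard step here: the content is exactly the two cited results together with the bookkeeping that the single hypothesis ``$\Ker A = 0$ and $\Im A^{*}+\Im B^{*}$ closed'' simultaneously furnishes closedness of $T$ (via Kaufman's theorem), closedness of $E_{3}$ (via the Remark), and injectivity of $\Phi$ (via $\Ker A\cap\Ker B = 0$ in Proposition \ref{Prop,surjective}). The one point worth stating carefully is that a bijective algebra homomorphism sending the identity to the identity transports the property ``the endomorphism algebra is $\mathbb C I$'' in both directions; everything else is the routine matching of the two descriptions of $E_{3}$.
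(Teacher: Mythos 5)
Your proposal is correct and follows essentially the same route as the paper: identify $G(BA^{-1})=\{(Ax,Bx):x\in K\}$ using $\ker A=0$, obtain closedness from the hypothesis on $\Im A^{*}+\Im B^{*}$, apply Proposition \ref{Prop,surjective} to get the algebra isomorphism $\Phi$, and conclude via Lemma \ref{lemma:transitive}. The only difference is that you spell out the transport of the property $End=\mathbb{C}I$ through the isomorphism, which the paper leaves implicit.
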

\begin{proof}
At first we note that the graph 
$G(BA^{-1})=\{(Ax,Bx);x\in K\}$,   
because  $\ker (A)=0$. 
Since 
$\Im A^{*}+\Im B^{*}$  is closed,  
the operator $BA^{-1}$ is
a closed operator 
by Remark after  Proposition \ref{Prop,surjective}
(or Theorem \ref{thm,Kaufman}).
Let ${\mathcal S}_{BA^{-1}}=(E_{0};E_{1},E_{2},
E_{3},E_{4})$ be a four-subspace system 
such that $E_{0}=K\oplus K,E_{1}=K\oplus 0,
E_{2}=0\oplus K,
E_{3}=\{(Ax,Bx);x\in K\} = G(BA^{-1}),
E_{4}=\{(x,x);x\in K\}$. Since $\ker (A)=0$, 
there exists an algebra isomomorphism $\Phi$
of $End(H,f)$ onto $End({\mathcal S}_{BA^{-1}})$ by 
Proposition
\ref{Prop,surjective}. Therefore $(H,f)$ is transitive 
if and only if ${\mathcal S}_{BA^{-1}}$ is transitive. 
Moreover  ${\mathcal S}_{BA^{-1}}$ is transitive 
if and only if  $BA^{-1}$ is transitive by 
Lemma \ref{lemma:transitive}. This implies the conclusion.  
\end{proof}

In the following we shall give some examples of transitive operators.

\begin{prop}
Let $Q$ be the Kronecker  quiver.
Let $S$ be the  unilateral shift on 
${H} =\ell^{2}(\mathbb{N})$ with a canonical 
basis $\{e_{1},e_{2},...\}$. 
For a bounded weight vector 
${\lambda}=(\lambda_{1},\lambda_{2},...)\in \ell^{\infty}(\mathbb{N})$ 
we associate with a diagonal operator 
$D_{\lambda}=\text{diag}(\lambda_{1},\lambda_{2},...)$, 
so that $SD_{\lambda}$ is a weighted shift operator. 
We assume that  $\lambda_{i}\ne\lambda_{j}$ if $i\ne j$. 
Take a vector  
$\overline{w}=(\overline{w_{n}})_n \in \ell^{2}(\mathbb{N})$ such 
that $w_{n}\ne 0$ for any $n\in \mathbb{N}$.
Put
$A= SD_{\lambda}+\theta_{e_{1},\overline{w}}$ and $B=S$.
Define a Hilbert representation $(H^{\lambda},f^{\lambda})$ of 
the Kronecker quiver $Q$ by 
$H^{\lambda}_{1}=H^{\lambda}_{2}={ H}$, 
$f^{\lambda}_{\alpha}=A$ and $f^{\lambda}_{\beta}=B$. 
Then $\ker A=0$ and the quotient $BA^{-1}$ is a transitive operator.
Furthermore,the operator $BA^{-1}$ is densely defined
if and only if 
$\lambda_{k}\ne 0$ for each $k\in \mathbb{N}$
and $ \displaystyle {(\frac{w_{k}}{\lambda_{k}})_{k}}\not\in \ell^{2}(\mathbb{N})$.
\label{thm:Kron-example}
\end{prop}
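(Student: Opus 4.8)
The plan is to verify the two separate assertions one at a time: first that $\ker A = 0$ and that $(H^{\lambda}, f^{\lambda})$ is transitive (whence $BA^{-1}$ is transitive by Proposition~\ref{unbounded:transitive}, once the closedness hypothesis $\Im A^{*} + \Im B^{*}$ closed is checked), and second the density criterion for $D(BA^{-1})$.

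First I would dispose of $\ker A = 0$. If $Ax = 0$ then $SD_{\lambda}x = -(x \mid \overline{w})e_{1}$; comparing coefficients, writing $x = \sum x_{n}e_{n}$, the right side is supported on $e_{1}$ while $SD_{\lambda}x = \sum \lambda_{n}x_{n}e_{n+1}$ is supported on $\{e_{2}, e_{3}, \dots\}$, so both sides vanish, giving $\lambda_{n}x_{n} = 0$ for all $n$ and $(x \mid \overline{w}) = 0$. One then has to argue, using $w_{n} \neq 0$ for all $n$, that this forces $x = 0$; the only subtlety is the indices $k$ with $\lambda_{k} = 0$, where $x_{k}$ is a priori free, but then the single linear constraint $(x\mid\overline w)=0$ together with $x$ being supported on $\{k : \lambda_k = 0\}$ need not force $x=0$ in general — so I expect the statement implicitly uses that $\lambda$ is injective, hence at most one $\lambda_k$ is zero, and a one-dimensional space killed by a functional with nonzero coefficient is $0$. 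For closedness of $\Im A^{*} + \Im B^{*} = \Im A^{*} + \Im S^{*}$: since $S^{*}$ is the backward shift, $\Im S^{*} = H$ is already all of $H$, so $\Im A^{*} + \Im B^{*} = H$ is trivially closed, and $G(BA^{-1}) = \{(Ax, Bx) : x \in H\}$ is closed. Thus Proposition~\ref{unbounded:transitive} applies and it remains only to show $(H^{\lambda}, f^{\lambda})$ is transitive.

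The heart of the argument is transitivity, i.e. $\mathrm{End}(H^{\lambda}, f^{\lambda}) = \mathbb{C}I$. Let $(S_{0}, T_{0}) \in \mathrm{End}(H^{\lambda}, f^{\lambda})$, so $AS_{0} = T_{0}A$ and $SS_{0} = T_{0}S$. From $SS_{0} = T_{0}S$, together with standard facts about the commutant-type relations of the unilateral shift, I would first pin down the structure of $T_{0}$: intertwining $S$ on the left forces $T_0$ to be analytic/Toeplitz-like, and combined with the rank-one perturbation in $A$ and the relation $AS_0 = T_0 A$ one expects to extract that $T_{0}$ commutes with the weighted shift $SD_{\lambda}$ up to the rank-one term $\theta_{e_1,\overline w}$. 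Then the key device (this is the Harrison--Radjavi--Rosenthal idea) is to exploit the distinctness $\lambda_{i} \neq \lambda_{j}$ and the non-vanishing $w_{n} \neq 0$: the relation $AS_0 - T_0 A = S D_\lambda S_0 - T_0 S D_\lambda + \theta_{e_1,\overline w}S_0 - T_0\theta_{e_1,\overline w} = 0$, read against the eigen-type data of $SD_\lambda$, should force $S_0$ and $T_0$ to be diagonal in the canonical basis, then that the diagonal entries are all equal (using that $\overline w$ has all coordinates nonzero, so no coordinate can be "disconnected"), hence $T_0 = cI$, and finally $S_0 = cI$ from $\ker A = 0$ exactly as in the injectivity argument at the end of Proposition~\ref{Prop,surjective}. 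I expect this diagonalization-and-rigidity step to be the main obstacle: carefully tracking how the rank-one perturbation couples the coordinates and rules out any nonscalar intertwiner is the real content.

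Finally, the density statement. One has $D(BA^{-1}) = \Im(A\vert_{\ker A^{\perp}}) = \Im A$ since $\ker A = 0$. So I would compute $\Im A$ explicitly: $y = Ax = SD_\lambda x + (x\mid\overline w)e_1$ means $y_1 = (x\mid\overline w)$ and $y_{n+1} = \lambda_n x_n$ for $n \geq 1$. If some $\lambda_k = 0$ then $y_{k+1} = 0$ is forced, so $\Im A$ lies in a proper closed subspace and is not dense; hence density requires $\lambda_k \neq 0$ for all $k$. Assuming that, given $y \in H$ one solves $x_n = y_{n+1}/\lambda_n$ and needs the compatibility $(x\mid\overline w) = y_1$, i.e. $\sum_n \overline{w_n}\, y_{n+1}/\lambda_n = y_1$; the point is to characterize when $\overline{\Im A} = H$, which amounts to asking whether the linear functional $x \mapsto (x\mid \overline w)$, restricted appropriately, is "unbounded enough" not to cut $\Im A$ down to a proper dense-or-not subspace. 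A short computation shows $\Im A$ is dense precisely when the vector $(w_k/\lambda_k)_k$ fails to lie in $\ell^2(\mathbb N)$: if it lies in $\ell^2$, the constraint $y_1 = \sum \overline{w_k}y_{k+1}/\lambda_k$ is a nontrivial bounded functional vanishing on $\Im A$ up to one coordinate, so $\Im A$ is not dense; if it does not lie in $\ell^2$, one can perturb any prescribed $y$ on finitely many coordinates to satisfy the constraint arbitrarily closely, giving density. Assembling these gives the stated equivalence.
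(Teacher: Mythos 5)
Your treatment of $\ker A=0$ (the single possible index with $\lambda_k=0$ handled via $w_k\ne 0$), of the closedness of $\Im A^*+\Im B^*$ (via $\Im S^*=H$), and of the passage to transitivity of $BA^{-1}$ through Proposition~\ref{unbounded:transitive} all match the paper. Your density argument also reaches the right place: you work directly with $\Im A$ and the annihilating functional $y\mapsto y_1-\sum_n (w_n/\lambda_n)y_{n+1}$, whereas the paper computes $\ker A^*$ explicitly and uses $\overline{\Im A}=(\ker A^*)^\perp$; these are the same computation in dual form, though the paper's version is cleaner for the ``dense'' direction, where your ``perturb on finitely many coordinates'' step is left loose (finite perturbations of $y$ do not by themselves control the possibly divergent sum $\sum w_n y_{n+1}/\lambda_n$, so you would still want the orthogonal-complement argument to close this).

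The genuine gap is the transitivity of $(H^{\lambda},f^{\lambda})$ itself. You correctly identify it as ``the heart of the argument,'' sketch a strategy (use $SS_0=T_0S$ to constrain $T_0$, then play the distinctness of the $\lambda_i$ and the non-vanishing of the $w_n$ against the rank-one perturbation to force $S_0=T_0=cI$), but you explicitly leave the decisive diagonalization-and-rigidity step as an acknowledged obstacle rather than carrying it out. The paper does not prove this step here at all: it is quoted as \cite[Theorem 3.7]{EW4}, and the entire content of the present proposition beyond that citation is exactly the routine part you did complete. So as a self-contained proof your proposal is incomplete at its central claim; as a reading of the paper, the missing ingredient is simply the external reference. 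If you do want to fill it in yourself, the workable route is close to your sketch: from $T_0SD_\lambda=SS_0D_\lambda$ one gets that $S(D_\lambda S_0-S_0D_\lambda)$ equals the rank-$\le 2$ operator $\theta_{T_0e_1,\overline w}-\theta_{e_1,S_0^*\overline w}$, injectivity of $S$ then makes $[D_\lambda,S_0]$ low rank, and the distinctness of the $\lambda_i$ together with $w_n\ne0$ for all $n$ is what rigidifies $S_0$ to a scalar; but each of these steps needs to be written out.
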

\begin{proof}
By \cite[Theorem 3.7.]{EW4}, the Hilbert representation $(H^{\lambda},f^{\lambda})$
is transitive.

For $x=(x_{n})_{n}\in \ell^2(\mathbb{N})$,
assume that  
$$Ax= (SD_{\lambda}+\theta_{e_{1},\overline{w}})x
=(\sum_{n=1}^{\infty}x_{n}w_{n},\lambda_{1}
x_{1},\lambda_{2}x_{2},\cdots)
=0.$$
If $\lambda_{k}\ne0$
for any $k\in \mathbb{N}$, then
$x_{k}=0$ for any $k\in \mathbb{N}$.
If there exists a $k\in \mathbb{N} $ such that
$\lambda_{k}=0$, then
$\lambda_{i}\ne0$ for $i\ne k$.
Hence $x_{i}=0$ for $i\ne k$.
Since $\sum_{n=1}^{\infty}x_{n}w_{n}=x_{k}w_{k}=0$,
$x_{k}=0$ by $w_{k}\ne0$.
Thus we have that $x=0$ and $\ker A=0$.
We note that $\Im B^{*}=\Im S^{*}=H$
and $\Im A^{*}+\Im B^{*}=H$ is closed in $H$.
Hence $BA^{-1}$ is a closed operator.

Next we shall consider the condition such that $BA^{-1}$ is densely defined.
We note that $\overline{D(BA^{-1})}=\overline{\Im A}=(\ker {A^{*}})^{\perp}$.
We shall show that
$\ker A^{*}\ne 0$ if and only if
(1)$\lambda_{k}=0$ for some $k\in \mathbb{N}$
or (2)$\lambda_{k}\ne0$ for any $k\in \mathbb{N}$ and $ \displaystyle {(\frac{w_{k}}{\lambda_{k}})_{k}}\in \ell^{2}(\mathbb{N})$.
We see that $A^{*}=D_{\lambda}^{*}S^{*}+\theta_{\overline{w},e_{1}}$ and 
 $x=(x_{n})_{n}$ is in $\ker A^{*}$
if and only if 
$$(\overline{\lambda_{1}}x_{2},\overline{\lambda_{2}}x_{3},\cdots)
=(-x_{1}\overline{w_{1}},-x_{1}\overline{w_{2}},\cdots).$$
Assume that (1) $\lambda_{k}=0$ for some $k\in \mathbb{N}$.
We put  $x=(x_{i})$ by 
\[
  x_{i} = \begin{cases}
    0 & (i\ne k+1), \\
    1 & (i= k+1).
  \end{cases}
\]
We have that $x\in \ker A^{*}$ and  $ \ker A^{*}\ne 0$. 

Assume that (2) $\lambda_{k}\ne0$ for any $k\in \mathbb{N}$
and $ \displaystyle {(\frac{w_{k}}{\lambda_{k}})_{k}}\in \ell^{2}(\mathbb{N})$.
Take an element
$ \displaystyle x={(1,-\overline{\left(\frac{w_{1}}{\lambda_{1}}\right)},
-\overline{\left(\frac{w_{2}}{\lambda_{2}}\right)},\cdots)}$.
We have $x\in \ker A^{*}$ and  $\ker A^{*}\ne 0$.
Conversely,
assume that there exists $x(\ne0)\in \ker A^{*}$.
Assume that  $x_{1}\ne 0$ .
Since
$$(\overline{\lambda_{1}}x_{2},\overline{\lambda_{2}}x_{3},\cdots)
=(-x_{1}\overline{w_{1}},-x_{1}\overline{w_{2}},\cdots),$$
and $\overline{w_{k}}\ne 0$ for any $k\in  \mathbb{N}$,
we have $\lambda_{k}\ne0$ for any $k\in \mathbb{N}$.

Since $ \displaystyle \left(-\frac{x_{k+1}}{x_{1}}\right)_{k}
\in \ell^{2}(\mathbb{N})$ and $ \displaystyle \left(-\frac{x_{k+1}}{x_{1}}\right)_{k}=
\overline{\left(\frac{w_{k}}{\lambda_{k}}\right)}_{k}$, we have that
$ \displaystyle 
\overline{\left(\frac{w_{k}}{\lambda_{k}}\right)}_{k}\in \ell^{2}(\mathbb{N})$.
Hence we have (2).
Assume that  $x_{1}=0$.
Since $x\ne0$,
there exists $k\in \mathbb{N}$
such that $x_{k+1}\ne0$.
Hence $\lambda_{k}=0$.
Therefore we have (1).
\end{proof}

\noindent{\bf Remark.}
The operator $BA^{-1}$ is densely defined
for $\lambda_{n}=1/n,w_{n}=1/n$ $(n\in \mathbb{N})$. 
The operator $BA^{-1}$ is not densely defined
for  $(\lambda_{n})_{n}$ by 
\[
\lambda_{n} = \begin{cases}
    0 & (n=1), \\
    1/n & (n\ne 1).
  \end{cases}
\]
The operator $BA^{-1}$ is  not densely defined
for
$\lambda_{n} =1-(1/2^{n}),w_{n}=1/n (n\in \mathbb{N})$.

We refer to \cite{Sh} for weighted shifts.

\begin{prop}
Let $Q$ be the  Kronecker quiver and 
${H} = \ell^{2}(\mathbb{Z})$. 
Let $a=(a(n))_{n \in \mathbb{Z}},
b=(b(n))_{n \in \mathbb{Z}}
\in \ell^{\infty}(\mathbb{Z})$
such that
$a(n)\ne0,b(n)\ne0$ for any $n \in \mathbb{Z}.$
We put $\displaystyle w_{m}=\frac{b(m)}{a(m)},m\in 
\mathbb{Z}$. We put
$$\displaystyle 
M_{k}(m,n):=
\frac{w_{m}w_{m+1}\cdots w_{m+k-1}}
{w_{n}w_{n+1}\cdots w_{n+k-1}}
\text{ for }m,n\in \mathbb{Z},k\geq 1.$$
Assume that for any $m\ne n,
(M_{k}(m,n))_k$ is an unbounded sequence.
Let $D_{a}$  be a diagonal operator with 
$a =(a(n))_n$ as diagonal coefficients 
and 
$D_{b}$ be a diagonal operator with 
$b =(b(n))_n$ as diagonal coefficients. 
Let $U$ be the bilateral  forward shift. 
Put $A = D_{a}$ and $B=UD_{b}$.
Define a Hilbert representation $(H,f)$ of 
the Kronecker quiver $Q$ by 
$H_{1}=H_{2}={H}$, 
$f_{\alpha}=A$ and $f_{\beta}=B$. 
Then the Hilbert representation $(H,f)$ 
is transitive. We also have $\ker A=0$ and $\ker B=0$.
And the operator $BA^{-1}$ is a densely defined transitive  operator.
\label{thm:Kro-example}
\end{prop}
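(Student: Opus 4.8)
The plan is to deduce the statement from Proposition~\ref{unbounded:transitive}, which requires three inputs: $\Ker A=0$, closedness of $\Im A^{*}+\Im B^{*}$, and transitivity of $(H,f)$; granting these it yields at once that $BA^{-1}$ is transitive. The kernel assertions are immediate: $A=D_{a}$ is injective because every $a(n)\ne0$, and $B=UD_{b}$ is injective because $U$ is unitary and $D_{b}$ is injective. For the second input, since $A^{*}=D_{\overline a}$, $B^{*}=D_{\overline b}U^{*}$ and $U^{*}$ is surjective, we have $\Im A^{*}+\Im B^{*}=\Im D_{\overline a}+\Im D_{\overline b}$, and I would verify directly that this subspace is closed (equivalently, by the Remark following Proposition~\ref{Prop,surjective}, that $E_{3}=\{(Ax,Bx);x\in H\}$ is closed, which is also what Theorem~\ref{thm,Kaufman} requires). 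Granting this, $BA^{-1}$ is a closed operator, and it is densely defined because its domain $\Im A$ has closure $(\Ker A^{*})^{\perp}=(\Ker D_{\overline a})^{\perp}=H$. So everything reduces to establishing that $(H,f)$ is transitive.

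The heart of the proof is thus the computation of $End(H,f)$, which I would carry out with matrix coefficients relative to the canonical orthonormal basis $(e_{n})_{n\in\mathbb{Z}}$ of $H=\ell^{2}(\mathbb{Z})$. Let $(S,T)\in End(H,f)$, so $S,T\in B(H)$ with $TA=AS$ and $TB=BS$; set $s_{ij}=\langle Se_{j},e_{i}\rangle$ and $t_{ij}=\langle Te_{j},e_{i}\rangle$. Since $A=D_{a}$ is diagonal, $TA=AS$ reads $a(j)\,t_{ij}=a(i)\,s_{ij}$, hence $s_{ij}=\dfrac{a(j)}{a(i)}\,t_{ij}$. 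Since $Be_{j}=b(j)e_{j+1}$, the relation $TB=BS$ reads $b(j)\,t_{i,j+1}=b(i-1)\,s_{i-1,j}$; substituting the formula for $s_{i-1,j}$, cancelling $a(j)\ne0$, and writing $w_{m}=b(m)/a(m)$, this becomes
\[
w_{j}\,t_{i,j+1}=w_{i-1}\,t_{i-1,j}\qquad(i,j\in\mathbb{Z}),
\]
which is precisely the commutation relation attached to a bilateral weighted shift (compare the weighted shift methods of \cite{Sh}).

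Now read this relation along the $\ell$-th diagonal, with $\gamma^{(\ell)}_{i}:=t_{i,\,i+\ell}$ and $j=i+\ell-1$: it becomes the first order recursion $\gamma^{(\ell)}_{i}=\dfrac{w_{i-1}}{w_{i+\ell-1}}\,\gamma^{(\ell)}_{i-1}$. Since every $w_{n}\ne0$ this recursion is invertible, so $\gamma^{(\ell)}_{i}$ vanishes for one $i$ if and only if it vanishes for all $i$, and iterating from $i=0$ gives $\gamma^{(\ell)}_{k}=M_{k}(0,\ell)\,\gamma^{(\ell)}_{0}$ for all $k\ge1$. If $\ell\ne0$, then $(M_{k}(0,\ell))_{k}$ is unbounded by hypothesis, whereas $|\gamma^{(\ell)}_{k}|=|\langle Te_{k+\ell},e_{k}\rangle|\le\|T\|$ for every $k$; hence $\gamma^{(\ell)}_{0}=0$ and therefore $t_{i,\,i+\ell}=0$ for all $i$, so $T$ is diagonal. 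For $\ell=0$ the recursion reads $\gamma^{(0)}_{i}=\gamma^{(0)}_{i-1}$, so all diagonal entries of $T$ coincide, say with $\lambda\in\mathbb{C}$, i.e.\ $T=\lambda I$; then $s_{ij}=\frac{a(j)}{a(i)}t_{ij}=\lambda\,\delta_{ij}$, so $S=\lambda I$ as well. Hence $End(H,f)=\{\lambda(I\oplus I):\lambda\in\mathbb{C}\}\cong\mathbb{C}I$, i.e.\ $(H,f)$ is transitive, and Proposition~\ref{unbounded:transitive} (using $\Ker A=0$ and the closedness of $\Im A^{*}+\Im B^{*}$) then gives that $BA^{-1}$ is transitive; with the dense definedness noted above this is the assertion.

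The main obstacle is the passage from the purely combinatorial hypothesis on the sequences $(M_{k}(m,n))_{k}$ to the operator statement that the coefficients $t_{ij}$ collapse to a single scalar; once the matrix relation is in hand, the decisive point is the elementary estimate $|t_{ij}|\le\|T\|$ played against the unboundedness of $(M_{k}(0,\ell))_{k}$ for $\ell\ne0$. A secondary point to treat with care is the closedness of $\Im A^{*}+\Im B^{*}=\Im D_{\overline a}+\Im D_{\overline b}$, since without it $BA^{-1}$ need not be a closed operator and hence not transitive in the present sense; concretely this closedness amounts to $\inf_{n}(|a(n)|^{2}+|b(n)|^{2})>0$, which one verifies from the standing assumptions on $a$ and $b$.
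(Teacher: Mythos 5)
Your overall route is the same as the paper's: verify the hypotheses of Proposition \ref{unbounded:transitive} ($\ker A=0$, closedness of $\Im A^{*}+\Im B^{*}$, transitivity of $(H,f)$) and let that proposition deliver the transitivity of $BA^{-1}$. The paper disposes of the transitivity of $(H,f)$ by citing \cite[Theorem 3.8]{EW4}, whereas you carry out the matrix-coefficient computation in full; your computation is correct and is exactly the right argument in this generality: the relations $s_{ij}=\frac{a(j)}{a(i)}t_{ij}$ and $w_{j}t_{i,j+1}=w_{i-1}t_{i-1,j}$, the diagonal recursion $\gamma^{(\ell)}_{k}=M_{k}(0,\ell)\gamma^{(\ell)}_{0}$, and the estimate $|\gamma^{(\ell)}_{k}|\le\|T\|$ played against the unboundedness of $(M_{k}(0,\ell))_{k}$ for $\ell\ne 0$ force $T=\lambda I$ and then $S=\lambda I$. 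The kernel and dense-definedness arguments are also fine.

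The one step that does not go through as written is your closing claim that $\inf_{n}(|a(n)|^{2}+|b(n)|^{2})>0$ ``is verified from the standing assumptions.'' It is not: the hypotheses constrain only the ratios $w_{m}=b(m)/a(m)$ (together with boundedness and nonvanishing of $a$ and $b$), and all of these are invariant under replacing $(a,b)$ by $(ca,cb)$ for a bounded nonvanishing scalar sequence $c$. Taking, say, $c(n)=1/(|n|+1)$ in the paper's own example produces data satisfying every stated hypothesis but with $\inf_{n}(|a(n)|^{2}+|b(n)|^{2})=0$; then $Z:x\mapsto(Ax,Bx)$ is injective but not bounded below, so $E_{3}=G(BA^{-1})=\{(Ax,Bx);x\in H\}$ is not closed, $\Im A^{*}+\Im B^{*}$ is not closed, and $BA^{-1}$ is not a closed operator --- hence not ``transitive'' in the sense of Lemma \ref{transitive operator}, which is defined only for closed operators. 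To be fair, this gap is already present in the Proposition as stated and in the paper's one-line appeal to Proposition \ref{unbounded:transitive}; but since you explicitly isolated the needed condition, you should either add $\inf_{n}(|a(n)|^{2}+|b(n)|^{2})>0$ as a hypothesis (it holds in the motivating example, where $\max(|a(n)|,|b(n)|)=1$ for every $n$) or restrict the final assertion about $BA^{-1}$ accordingly, rather than assert that the condition follows from what is given.
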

\begin{proof}
As in \cite[Theorem 3.8.]{EW4}, we can similarly prove that
the Hilbert representation $(H,f)$ 
is transitive.   By Proposition \ref{unbounded:transitive},
the operator $BA^{-1}$ is transitive.

\end{proof}
\noindent
{\bf Example.}
\cite[Theorem 3.8.]{EW4} 
 Fix a positive constant  $\lambda>1$. 
Consider two sequences 
$a = (a(n))_{n \in \mathbb{Z}}$ and 
$b = (b(n))_{n \in \mathbb{Z}}$ by 
$$
a(n)= 
\begin{cases} e^{-\lambda^{n}}& 
  (n\geq 1, n \text{\ is even }), \\
  1 &  \ \  (otherwise), 
\end{cases}
\ \ \ \ \ 
b(n)=
\begin{cases} e^{-\lambda^{n}}&   (n\geq 1,n \text{\ is odd }),  \\
 1 & \ \  (otherwise).
\end{cases}
$$
These two sequences $a$ and $b$ satisfy the condition of the Proposition.

The concept of transitive operators 
are useful because certain transitive Hilbert 
representations of a quiver are given in terms 
of transitive operators in the next section.

\section{\textbf{Extended Dynkin diagrams 
 and 
transitive Hilbert representations}.}

We consider 
transitive Hilbert representations of quivers 
whose underlying undirected graph is an 
extended Dynkin diagram
 $\widetilde{A_{n}}(n\geq 0)$.
In $\widetilde{A_{0}}$ case, the oriented cyclic quiver is also
called Jordan quiver. 
Trivially we have no infinite-dimensional 
transitive 
Hilbert 
representations of quivers 
whose underlying undirected graph is an 
extended Dynkin diagram
 $\widetilde{A_{0}}$.
Next we consider 
transitive Hilbert representations of quivers 
whose underlying undirected graph is an 
extended Dynkin diagram
 $\widetilde{A_{n}}(n\geq 1)$.
The quiver $C_{n}$ with $n\geq 2$
whose underlying undirected graph is an 
extended Dynkin diagram
 $\widetilde{A_{n-1}}$
is called the oriented cyclic quiver if the quiver has cyclic orientation.
The set $V$ of the vertices of $C_{n}$ is $\{1,2,\cdots,n\}$
and the set $E$ of the arrows of $C_{n}$ is $\{\alpha_{1}, \alpha_{2},\cdots,
\alpha_{n}\}$ such $s(\alpha_{i})=i,r(\alpha_{i})=i+1 
(i=1,\cdots n-1)$ and $s(\alpha_{n})=n,
r(\alpha_{n})=1$.
For  $\tilde{A_{1}}$ case, the quivers are the oriented cyclic quiver $C_{2}$ and 
the Kronecker quiver $Q$.

\begin{thm}
Let $\Gamma $ be a quiver whose underlying undirected graph is
an extended Dynkin diagram $\widetilde{A_{n}},(n\geq 1)$. 
 If 
$\Gamma $ is not an oriented cyclic quiver, then there exists an infinite-dimensional transitive 
Hilbert representation of $\Gamma $.
\label{thm:notcyclic}
\end{thm}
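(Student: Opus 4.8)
The plan is to reduce everything to the Kronecker quiver, using the elementary fact that a non-oriented-cyclic orientation of $\widetilde{A_n}$ always has a source. First I would record that observation: each vertex of $\widetilde{A_n}$ has degree $2$, so its in-degree lies in $\{0,1,2\}$; if no vertex had in-degree $0$, then --- since the in-degrees sum to the number $n+1$ of arrows, which equals the number of vertices --- every vertex would have in-degree exactly $1$ and hence out-degree exactly $1$, forcing $\Gamma$ to be the oriented cyclic quiver $C_{n+1}$, contrary to hypothesis. So $\Gamma$ has a vertex $p$ which is a source, i.e.\ both arrows $\alpha,\beta$ incident to $p$ emanate from $p$. (For $\widetilde{A_1}$ this just says that $\Gamma$ is the Kronecker quiver $Q$.)

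Next I would fix an infinite-dimensional transitive Hilbert representation of $Q$ coming from Proposition~\ref{thm:Kron-example} (or Proposition~\ref{thm:Kro-example}): a Hilbert space $K=\ell^{2}(\mathbb{N})$ (resp.\ $\ell^{2}(\mathbb{Z})$) and operators $A,B\in B(K)$ such that the representation $H_{1}=H_{2}=K$, $f_{\alpha}=A$, $f_{\beta}=B$ has endomorphism algebra $\mathbb{C}I$. Then I would build a Hilbert representation $(H,f)$ of $\Gamma$ by putting $H_{v}=K$ at every vertex $v$, putting $f_{\alpha}=A$ and $f_{\beta}=B$ on the two arrows at the source $p$, and $f_{e}=I_{K}$ on every other arrow $e$ of $\Gamma$.

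To finish I would compute $End(H,f)$. Given $T=(T_{v})_{v}\in End(H,f)$, deleting $\alpha$ and $\beta$ from the cycle isolates $p$ and leaves a path through the remaining $n$ vertices; for every arrow $e$ of that path the relation $T_{r(e)}f_{e}=f_{e}T_{s(e)}$ reads $T_{r(e)}=T_{s(e)}$, irrespective of the orientation of $e$, so all $T_{v}$ with $v\ne p$ equal a common operator $T\in B(K)$, and, writing $S:=T_{p}$, the relations at $\alpha,\beta$ become $TA=AS$ and $TB=BS$. Thus $(S,T)$ is an endomorphism of the Kronecker representation above, so $S=T=\lambda I_{K}$ for some $\lambda\in\mathbb{C}$ by transitivity; hence $T_{v}=\lambda I$ for all $v$ and $End(H,f)=\mathbb{C}I$. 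Since $K$ is infinite-dimensional, $(H,f)$ is the desired infinite-dimensional transitive Hilbert representation of $\Gamma$.

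The genuine content lies entirely in the already-established existence of a transitive --- not merely indecomposable, nor merely a brick --- infinite-dimensional Hilbert representation of $Q$; within the present argument the only subtlety is that the cycle need not be monotonically oriented, but the device of concentrating the operators $A,B$ on the two arrows at a single source while using the identity elsewhere makes the orientations of those other arrows irrelevant, so there is no real obstacle to overcome here. (The degenerate case $\widetilde{A_1}$, where deleting $\alpha,\beta$ leaves two isolated vertices, is handled automatically: there are then no path arrows, and $TA=AS$, $TB=BS$ still force scalars.)
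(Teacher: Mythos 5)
Your proof is correct and follows essentially the same route as the paper: place the transitive Kronecker pair $(A,B)$ from Proposition~\ref{thm:Kro-example} on two arrows of the cycle, put the identity on all other arrows, and observe that any endomorphism collapses to an endomorphism of the Kronecker representation. The only cosmetic difference is that you insist the two arrows share a common source (justified by your in-degree count), whereas the paper merely selects one ``forward'' and one ``backward'' arrow anywhere on the cycle; your write-up actually supplies more of the verification of transitivity than the paper does.
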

\begin{proof}
Assume that $\Gamma $ is not an oriented cyclic quiver.
Then there exist vertices $i$ and $j$ and arrows
$\alpha$ and $\beta$
such that $s(\alpha)=i,r(\alpha)=i+1$
and $s(\beta)=j+1,r(\beta)=j$$(\mod n)$.
There exists a transitive Hilbert representation $(H,f)$ 
of the Kronecker quiver $Q$ 
given by $A,B\in B(H)$
in \cite[Theorem 3.8.]{EW4}.
We construct a Hilbert representation
 $(H^{\prime},f^{\prime})$ of $\Gamma=(V,E)$ such that
$H^{\prime}_{k}=H(k\in V)$
, $f^{\prime}_{\gamma}=I_{H}$ for $\gamma\ne \alpha,\beta\ (\gamma\in E)$, $f^{\prime}_{\alpha}=A$
and $f^{\prime}_{\beta}=B$.
Then the representation $(H^{\prime},f^{\prime})$ of $\Gamma=(V,E)$
is transitive.

\end{proof}
By Theorem \ref{thm:notcyclic}, the remaining case of the  problem for $\widetilde{A_{n}} (n\geq 1)$
is an oriented cyclic quiver.
It is enough to consider the case that
$H_{i}\ne 0$ for any $i$ by the following lemma.
\begin{lemma}
Let $(H,f)$ be a Hilbert 
representation of the oriented cyclic quiver $C_{n}$.
Assume that there exists a vertex $k$
such that 
$H_{k}=0(1\leq k\leq n)$.
Let  $(K,g)$ be a Hilbert representation
of the oriented cyclic quiver $C_{n-1}$
such that $K_{i}=H_{i}(1\leq i \leq k-1)$,
$K_{i}=H_{i+1}
(k\leq i \leq n-1)$,
$g_{\alpha_{i}}=f_{\alpha_{i}}
(1\leq i \leq k-2)$,
$g_{\alpha_{k-1}}=0,$
$g_{\alpha_{i}}=f_{\alpha_{i+1}}
(k\leq i \leq n-1)$.
Then
$End(H,f)$ is isomorphic to
$End(K,g).$
\label{lemma:End=}
\end{lemma}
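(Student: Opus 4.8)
The plan is to write down an explicit algebra isomorphism $\Psi\colon End(H,f)\to End(K,g)$ that simply deletes the component sitting at the vanishing vertex $k$ and re-indexes the rest, and to produce its inverse $\Phi$ by reinserting a zero component at $k$. The first point to record is that for every $T=(T_v)_{v\in V}\in End(H,f)$ one automatically has $T_k=0$, because $T_k\in B(H_k)=B(0)=0$. So an endomorphism of $(H,f)$ is exactly a family $(T_1,\dots,T_{k-1},0,T_{k+1},\dots,T_n)$ satisfying the intertwining relations $T_{r(\alpha_i)}f_{\alpha_i}=f_{\alpha_i}T_{s(\alpha_i)}$ for $i=1,\dots,n$.

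Next I would sort these relations into the ones that carry information and the ones that do not. The two arrows incident with $k$, namely $\alpha_{k-1}\colon k-1\to k$ and $\alpha_k\colon k\to k+1$ (indices read mod $n$), are the zero maps in $(H,f)$ since they have source or range $H_k=0$; hence both sides of their intertwining relations vanish identically and those relations are vacuous. For every other arrow $\alpha_i$, the substitution $K_i=H_i$ for $i<k$ and $K_i=H_{i+1}$ for $i\ge k$ turns the relation for $\alpha_i$ in $(H,f)$ literally into the relation for the corresponding arrow of $(K,g)$, using $g_{\alpha_i}=f_{\alpha_i}$ or $g_{\alpha_i}=f_{\alpha_{i+1}}$ according to the position of $i$, together with the fact that $g_{\alpha_{k-1}}=0$ makes the $i=k-1$ relation of $(K,g)$ vacuous as well. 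Consequently $\Psi(T):=(T_1,\dots,T_{k-1},T_{k+1},\dots,T_n)$ is a well-defined element of $End(K,g)$, and conversely $\Phi(S):=(S_1,\dots,S_{k-1},0,S_k,\dots,S_{n-1})$ is a well-defined element of $End(H,f)$: the only relations to be checked for $\Phi(S)$ are those of the two arrows incident with $k$, and these hold trivially because $H_k=0$. Finally I would observe that $\Phi$ and $\Psi$ are mutually inverse, and that both are clearly linear, unital, and multiplicative (composition in $End$ is computed vertex by vertex and no data lives at vertex $k$); hence $\Psi$ is an algebra isomorphism and $End(H,f)\cong End(K,g)$.

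The argument is pure bookkeeping, so there is no genuine obstacle; the one place that needs a little care is the cyclic index convention. When $k=1$ or $k=n$ one of the arrows "incident with $k$" is the wrap-around arrow $\alpha_n\colon n\to 1$, and the displayed formulas for $g$ degenerate at the endpoints of the index ranges. I would therefore state the re-indexing (of both the operators $T_v$ and the maps $g_{\alpha_i}$) with all vertex and arrow labels read modulo $n$, so that the boundary cases are handled uniformly and the identification of the informative intertwining relations of $(H,f)$ with those of $(K,g)$ holds for every $k$.
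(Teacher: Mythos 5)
Your proposal is correct and follows essentially the same route as the paper's proof: since $B(H_k)=0$ forces $T_k=0$, one deletes the $k$-th component and re-indexes, which is exactly the correspondence the paper writes down (your version just spells out the vacuity of the intertwining relations at the arrows incident to $k$, the inverse map, and the mod-$n$ boundary cases, which the paper leaves implicit).
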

\begin{proof}
Take $T=(T_{i})_{i}\in End(H,f)$
for $i=1,\cdots,n$.
Since 
$H_{k}=0$,
$B(H_{k})=0$.
Hence we can associate
$T=(T_{i})_{i}\in End(H,f)$
with
$T^{\prime}=(T^{\prime}_{i})_{i}\in End(K,g)$,
by putting
$T_{i}=T^{\prime}_{i}$
for
$1\leq i\leq (k-1)$
and
$T_{i+1}=T^{\prime}_{i}$
for $k\leq i\leq (n-1)$.
By this correspondence
we have that
$End(H,f)$ is isomorphic to
$End(K,g)$.
\end{proof}

For the case that $H_{i}=\mathbb{C}$ or $0$, we
introduce a concept of an equivalence relation 
for vertices in terms of a Hilbert representation.

\noindent
{\bf Definition.}
Let $(H,f)$ be a Hilbert representation of the oriented cyclic quiver
$C_{n}=(V,E)$ such that $H_{i}=\mathbb{C}$ or $0$.
We give an equivalence relation for the set of vertices 
$\{i\in V;H_{i}\ne 0\}$ as follows:
Take vertices $i,j$
such that $H_{i}\ne 0$ and $H_{j}\ne 0$.
We say that 
vertices $i$ and $j$ are $(H,f)$-connected if
(1) $i=j$ 
or
(2) $i<j$ and $f_{\alpha_{j-1}}\ne0,\cdots, f_{\alpha_{i+1}}\ne0,f_{\alpha_{i}}\ne 0$
or
(3)
$i>j$ and $f_{\alpha_{i-1}}\ne0,\cdots, f_{\alpha_{j+1}}\ne0,f_{\alpha_{j}}\ne 0$.
 
\begin{lemma}
Let $(H,f)$ be a Hilbert representation of the oriented cyclic quiver
$C_{n}$ such that $H_{i}=\mathbb{C}$
or 0 ($i=1,2,\cdots,n$).
Then $(H,f)$ is transitive 
if and only if  
there exists only one
$(H,f)$-connected component.
\label{lemma:component}
\end{lemma}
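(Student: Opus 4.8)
The plan is to compute the endomorphism algebra $End(H,f)$ outright and then read the transitivity criterion off its dimension. Since each $H_i$ is $\mathbb{C}$ or $0$, we have $B(H_i)=\mathbb{C}$ when $H_i=\mathbb{C}$ and $B(H_i)=0$ when $H_i=0$, and each structure map $f_{\alpha_k}$ is a scalar $c_k\in\mathbb{C}$ (necessarily $c_k=0$ as soon as $H_{s(\alpha_k)}=0$ or $H_{r(\alpha_k)}=0$). Consequently an endomorphism $T=(T_i)_i$ is exactly a tuple $(t_i)_i$ of scalars subject to $t_i=0$ whenever $H_i=0$, and the intertwining relation $T_{r(\alpha_k)}f_{\alpha_k}=f_{\alpha_k}T_{s(\alpha_k)}$ turns into $c_k\,(t_{r(\alpha_k)}-t_{s(\alpha_k)})=0$: this is empty if $f_{\alpha_k}=0$ and forces $t_{s(\alpha_k)}=t_{r(\alpha_k)}$ if $f_{\alpha_k}\ne 0$. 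Thus $End(H,f)$ is isomorphic to the algebra of all $\mathbb{C}$-valued functions $i\mapsto t_i$ on $\{i:H_i\ne 0\}$ that are constant along every arrow $\alpha_k$ (with $k$ read modulo $n$, so that $\alpha_n$ is included) for which $f_{\alpha_k}\ne 0$.

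Next I would match this with the $(H,f)$-connected components. On $\{i:H_i\ne 0\}$ form the graph whose edges join $i$ to $i+1 \pmod n$ precisely when $f_{\alpha_i}\ne 0$; unravelling conditions (1)--(3) of the definition, two vertices are $(H,f)$-connected exactly when they are joined by a path in this graph, i.e.\ by a chain of nonvanishing structure maps, and the vertices with $H_i=0$ are irrelevant since both arrows meeting such a vertex vanish. A function as in the previous paragraph is then forced to be constant precisely on each connected component, so $\dim_{\mathbb{C}} End(H,f)$ equals the number of $(H,f)$-connected components.

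To finish: the identity endomorphism corresponds to the constant function $1$, so $\mathbb{C}I\subseteq End(H,f)$ with $\dim_{\mathbb{C}}\mathbb{C}I=1$; hence $(H,f)$ is transitive, i.e.\ $End(H,f)=\mathbb{C}I$, if and only if $\dim_{\mathbb{C}} End(H,f)=1$, if and only if there is exactly one $(H,f)$-connected component. For the ``only if'' part one may argue more concretely: if there are two distinct components $C$ and $C'$, then putting $t_i=1$ for $i\in C$ and $t_i=0$ for every other vertex (including the $H_i=0$ ones) defines an endomorphism — each nontrivial intertwining relation only asks for equality inside one component — and it is plainly not a scalar multiple of $I$, so $(H,f)$ is not transitive.

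The step that needs the most care is the second one, namely the identification $\dim_{\mathbb{C}} End(H,f)=\#\{(H,f)\text{-connected components}\}$. Besides the routine remark that the $H_i=0$ vertices drop out, the genuine point is to track the cyclic arrow $\alpha_n$ correctly: the equivalence relation defining the components must register chains of nonzero maps that close up all the way around the cycle through $\alpha_n$. It is exactly this feature that is used in the endomorphism exhibited for the ``only if'' direction — a chain running through $\alpha_n$ must already have merged the components it links, for otherwise that endomorphism would violate the $\alpha_n$-relation.
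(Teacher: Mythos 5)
Your proposal is correct and follows essentially the same route as the paper: both arguments reduce an endomorphism to a tuple of scalars that must agree across every nonvanishing arrow (including the wrap-around arrow $\alpha_n$) and vanish at the zero vertices, so that $End(H,f)$ is the algebra of functions constant on each $(H,f)$-connected component, and both exhibit the same component-indicator endomorphism to rule out transitivity when there are two or more components. Your packaging of this as the dimension count $\dim_{\mathbb{C}} End(H,f)=\#\{\text{components}\}$ is only a cosmetic difference from the paper's two-implication presentation.
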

\begin{proof}
Assume that $(H,f)$ is transitive.
Assume that  there exist two $(H,f)$-connected components $D_{1}$ and
$D_{2}$
in the set
$\{i\in V;H_{i}\ne 0\}$.
Let $\lambda_{1}\in \mathbb{C}$ 
, $\lambda_{2}\in \mathbb{C}$
such that $\lambda_{1}\ne\lambda_{2}$.
We define
$T=(T_{i})_{i\in V}$
by
$T_{i}=\lambda_{1}$ for $i\in D_{1}$
and
$T_{j}=\lambda_{2}$ for $j\in D_{2}$
and
$T_{k}=0$ for $k$(otherwise).
Then $T=(T_{i})_{i\in V}$ is in $End(H,f)$.
This is a contradiction.
Conversely assume that
there exists only one
$(H,f)$-connected component.
Hence there exist decomposition of $V$ by $D_{3}$ and
$D_{4}$ such that
 $$D_{3}\cup D_{4}=V,D_{3}\cap D_{4}=\emptyset,
D_{3}=\{i;H_{i}\ne 0\},
D_{4}=\{j;H_{j}=0\}$$
and $D_{3}$ is the $(H,f)$-connected component.

Let $T=(T_{i})_{i\in V}\in End(H,f).$
Then
$T_{i}=T_{j}$ for $i,j\in D_{3}$.
In fact if $i<j$,
then
$f_{\alpha_{i}}\ne 0,
f_{\alpha_{i+1}}\ne 0,\cdots,
f_{\alpha_{j-1}}\ne 0
$
and
$f_{\alpha_{i}}T_{i}=
T_{i+1}f_{\alpha_{i}},
\cdots
,
f_{\alpha_{j-1}}T_{j-1}=
T_{j}f_{\alpha_{j-1}}$.
Since $f_{\alpha_{i}}\ne 0$ for $i\in D_{3}$,
$T_{i}=T_{i+1}=\cdots=T_{j}$.
Hence
$T_{i}=T_{j}$ for all $i,j\in D_{3}$.
And  $T_{i}=T_{j}=0$ for $i,j\in D_{4}$.
Thus $End(H,f)$ is isomorphic to $\mathbb{C}$.
Hence $(H,f)$ is transitive.

\end{proof}

Next lemma guarantees that we
may assume that
$H_{i}\subset H_{j}$ 
if $\dim H_{i}\leq \dim H_{j}$.
\begin{lemma}
Let $(H_{i})_{i=1}^{n}$
be a family of nonzero Hilbert spaces.
Then there exists a family $(K(i))_{i=1}^{n}$
of subspaces
in a Hilbert space $V$,
such that for any $i(1\leq i\leq n)$,
there exists a number $m(i)(1\leq m(i)\leq n)$
such that $H_{i}$
is isomorphic to $\oplus_{j=1}^{m(i)}K(j)$.
\label{lemma:spacedecomposition}
\end{lemma}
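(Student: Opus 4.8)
The plan is to reorder the given spaces by nondecreasing dimension and then realize the successive dimensions by a nested chain of subspaces. First I would fix a permutation $\sigma$ of $\{1,\dots,n\}$ with $\dim H_{\sigma(1)}\le\dim H_{\sigma(2)}\le\cdots\le\dim H_{\sigma(n)}$, writing $d_k=\dim H_{\sigma(k)}$ for the orthogonal (Hilbert-space) dimension, so that $d_1\le d_2\le\cdots\le d_n$. Since two Hilbert spaces are isomorphic (indeed isometrically isomorphic) exactly when they have the same orthogonal dimension, the task reduces to producing mutually orthogonal subspaces $K(1),\dots,K(n)$ of one Hilbert space $V$ with $\dim\big(K(1)\oplus\cdots\oplus K(k)\big)=d_k$ for every $k$; one then sets $m(i):=\sigma^{-1}(i)\in\{1,\dots,n\}$ and reads off $H_i=H_{\sigma(m(i))}\cong\bigoplus_{j=1}^{m(i)}K(j)$.

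To construct the $K(k)$, I would take $V$ to be a Hilbert space of orthogonal dimension $d_n$ with a fixed orthonormal basis $\{e_t\}_{t\in I}$, $|I|=d_n$, and choose a chain of subsets $\emptyset=I_0\subseteq I_1\subseteq\cdots\subseteq I_n=I$ with $|I_k|=d_k$. Such a chain is obtained by descending induction: $I_n=I$, and given $I_k$ one picks $I_{k-1}\subseteq I_k$ of cardinality $d_{k-1}$, which is possible because $d_{k-1}\le d_k=|I_k|$ and every cardinal not exceeding $|I_k|$ is realized as the size of a subset of $I_k$. Setting $J_k:=I_k\setminus I_{k-1}$ (so that $I=\bigsqcup_{k=1}^n J_k$) and letting $K(k)$ be the closed subspace of $V$ spanned by $\{e_t:t\in J_k\}$, the $K(k)$ are mutually orthogonal, $K(1)\oplus\cdots\oplus K(k)$ equals the closed span of $\{e_t:t\in I_k\}$, and hence has orthogonal dimension $|I_k|=d_k$, as required. (If $d_{k-1}=d_k$ is finite one is forced to take $I_k=I_{k-1}$, i.e.\ $K(k)=0$; this is harmless, as the statement allows zero subspaces.)

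Finally, for each $i$ the space $K(1)\oplus\cdots\oplus K(\sigma^{-1}(i))$ has dimension $d_{\sigma^{-1}(i)}=\dim H_{\sigma(\sigma^{-1}(i))}=\dim H_i$ and is therefore isomorphic to $H_i$, which with $m(i)=\sigma^{-1}(i)$ is precisely the desired conclusion, and clearly $1\le m(i)\le n$. I do not expect a serious obstacle here; the only point to watch is the elementary cardinal arithmetic used to cut $I$ into the blocks $J_k$ when some of the $H_i$ are infinite-dimensional (for finite-dimensional spaces the construction is just the telescoping of the increasing dimensions $d_1\le\cdots\le d_n$), together with keeping in mind that the isomorphisms involved are isomorphisms of Hilbert spaces and thus depend only on the orthogonal dimension.
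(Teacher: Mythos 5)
Your proposal is correct and follows essentially the same route as the paper: both arguments reorder the spaces by nondecreasing dimension, realize them as a nested chain inside one ambient Hilbert space, and take the successive orthogonal differences of the chain as the subspaces $K(j)$, with $m(i)$ the position of $H_i$ in the sorted order. Your version merely makes the paper's construction more explicit by working with orthonormal bases and the cardinal arithmetic of the index sets.
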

\begin{proof}
We arrange a family of Hilbert spaces  $(H_{i})_{i}$
in increasing order of dimension
and as a result,we have 
$(H_{\ell(1)})$, 
$(H_{\ell(2)})$, 
$\cdots $,
$(H_{\ell(n)})$ in increasing order of dimension.
Construct an ambient space $V$ and its increasing subspaces
$H_{i}^{\prime}\cong H_{i}$ such that 
$(H_{\ell(1)}^{\prime})$ 
$\subset (H_{\ell(2)}^{\prime})$ 
$\subset \cdots $
$\subset (H_{\ell(n)}^{\prime})\subset V$.
Put
$K_{1}=H_{\ell(1)}^{\prime},
K_{2}=H_{\ell(2)}^{\prime}\cap (H_{\ell(1)}^{\prime})^{\perp},
\cdots,
K_{n}=H_{\ell(n)}^{\prime}\cap (H_{\ell(n-1)}^{\prime})^{\perp}.$
Hence there exists a number $m(i)$ such that 
$H_{i}^{\prime}=
 K(1)\oplus K(2)\oplus \cdots \oplus K(m(i))$.
Thus we have that
$H_{i}$
is isomorphic to
$K(1)\oplus K(2)\oplus \cdots \oplus K(m(i))$.
\end{proof}
Firstly we investigate transitive Hilbert representations of oriented cyclic quivers
 $C_{2}$ and $C_{3}$ .
Let $(H,f)$ be a Hilbert representation of $C_{2}$.
In the below we denote  
$f_{\alpha_{1}},f_{\alpha_{2}}$ by $A_{1},A_{2}$ for short.

\begin{lemma}
Let $(H,f)$ be a  transitive Hilbert representation  of $C_{2}$.
Assume that $H_{1}=H_{2}=K\ne 0$,
$A_{1}\in \mathbb{C}$ and $A_{2}\in \mathbb{C}$.
If $A_{1}\ne 0$ or $A_{2}\ne 0$,
then $K=\mathbb{C}$.
\label{lemma 2C} 
\end{lemma}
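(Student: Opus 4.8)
The plan is to analyze an arbitrary endomorphism $T=(T_1,T_2)$ of $(H,f)$ and show that if $K$ has dimension at least $2$ then $End(H,f)$ is strictly larger than $\mathbb{C}I$, contradicting transitivity. Since $A_1,A_2$ are scalars, write $A_1=a_1 I$ and $A_2=a_2 I$ with $(a_1,a_2)\neq(0,0)$. The endomorphism conditions for the quiver $C_2$ read $A_1 T_1 = T_2 A_1$ and $A_2 T_2 = T_1 A_2$, i.e. $a_1 T_1 = a_1 T_2$ and $a_2 T_2 = a_2 T_1$.

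First I would treat the case $a_1\neq 0$: then the first relation forces $T_1 = T_2$, and the second relation $a_2 T_2 = a_2 T_1$ is automatically satisfied (whether or not $a_2=0$). Hence $End(H,f) = \{(T,T) : T\in B(K)\}$, which is isomorphic to $B(K)$. The symmetric argument handles $a_2\neq 0$, again giving $End(H,f)\cong B(K)$. In either case, if $\dim K\geq 2$ then $B(K)$ is not equal to $\mathbb{C}I$ (for instance it contains a nonscalar diagonal operator with respect to any orthonormal basis), so $(H,f)$ would not be transitive. Therefore transitivity forces $\dim K \leq 1$, and since $K\neq 0$ we get $K=\mathbb{C}$.

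I expect no serious obstacle here; the only point requiring a little care is to make sure that when $a_1\neq 0$ but $a_2=0$ (or vice versa) the pair $(T,T)$ genuinely lies in $End(H,f)$ for every $T\in B(K)$ — which it does, because the relation involving $a_2$ degenerates to $0=0$. One should also note explicitly that the hypotheses only assume $A_1,A_2$ are scalar multiples of the identity on the single space $K$, so the reduction to the two scalar equations above is immediate and no continuity or domain subtleties intervene (all operators here are bounded). The conclusion $K=\mathbb{C}$ then follows at once.
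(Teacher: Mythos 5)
Your proposal is correct and follows essentially the same route as the paper: both arguments observe that, since $A_1$ and $A_2$ are scalars, every pair $(T,T)$ with $T\in B(K)$ lies in $End(H,f)$, so transitivity forces $\dim K=1$. The extra case analysis you perform (computing $End(H,f)$ exactly depending on which of $a_1,a_2$ is nonzero) is harmless but not needed, since exhibiting the subalgebra $\{(T,T):T\in B(K)\}\subset End(H,f)$ already suffices.
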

\begin{proof}
Let $T\in B(K)$.
Then $(T,T)\in End(H,f)$.
In fact $A_{1}T=TA_{1}$
and $A_{2}T=TA_{2}$.
If $\dim K>1$,
$B(K)\ne \mathbb{C}I$.
Since $(H,f)$ is transitive,this is a contradiction.
Thus $\dim K=1$.
\end{proof}

\begin{lemma}
Let $(H,f)$ be a  Hilbert representation of $C_{2}$.
Then $(H,f)$ is transitive if and only if 
one of the following conditions holds.
\begin{enumerate}
\item
$H_1=\mathbb{C},H_{2}=0,A_{1}=0$ and $A_{2}=0,$
\item
 $H_1=0,H_{2}=\mathbb{C}, A_{1}=0$ and \ $A_{2}=0,$
\item
$H_1=\mathbb{C}$ and 
$H_{2}=\mathbb{C}$ \
and ($A_{1}\ne 0$ or
$A_{2}\ne 0$).
\end{enumerate}
\end{lemma}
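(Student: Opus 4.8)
The plan is to prove both implications, the reverse one by direct inspection and the forward one by a case analysis on the product $A_{2}A_{1}$. Throughout write $A_{1}=f_{\alpha_{1}}\colon H_{1}\to H_{2}$ and $A_{2}=f_{\alpha_{2}}\colon H_{2}\to H_{1}$, and recall that an endomorphism of $(H,f)$ is a pair $(T_{1},T_{2})\in B(H_{1})\times B(H_{2})$ with $T_{2}A_{1}=A_{1}T_{1}$ and $T_{1}A_{2}=A_{2}T_{2}$. For ``(1), (2) or (3) $\Rightarrow$ transitive'': in (1) and (2) one of the two Hilbert spaces is $0$, so $End(H,f)\cong B(\mathbb{C})=\mathbb{C}I$; in (3) an endomorphism is a pair of scalars $(t_{1},t_{2})$ and the relations $(t_{2}-t_{1})A_{1}=0=(t_{1}-t_{2})A_{2}$ force $t_{1}=t_{2}$ as soon as $A_{1}\ne 0$ or $A_{2}\ne 0$, so again $End(H,f)=\mathbb{C}I$.

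For the converse, suppose $(H,f)$ is transitive, hence nonzero. If $H_{2}=0$, then $A_{1}$ and $A_{2}$ are forced to be $0$ and $End(H,f)\cong B(H_{1})$, so transitivity gives $\dim H_{1}=1$, which is case (1); symmetrically $H_{1}=0$ gives (2). Assume now $H_{1}\ne 0\ne H_{2}$. If $A_{1}=A_{2}=0$, then $(H,f)$ is the direct sum of the two nonzero representations built from $H_{1}$ and from $H_{2}$, contradicting indecomposability, so $A_{1}\ne 0$ or $A_{2}\ne 0$. Now observe that $(A_{2}A_{1},A_{1}A_{2})$ is always an endomorphism of $(H,f)$ (both intertwining identities are mere associativity), so transitivity forces $A_{2}A_{1}=\lambda I_{H_{1}}$ and $A_{1}A_{2}=\lambda I_{H_{2}}$ for one common scalar $\lambda$. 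If $\lambda\ne 0$, then $A_{1}$ is invertible with inverse $\lambda^{-1}A_{2}$; the relation $T_{2}A_{1}=A_{1}T_{1}$ gives $T_{2}=A_{1}T_{1}A_{1}^{-1}$ and makes the second intertwiner automatic, so $End(H,f)\cong B(H_{1})$, whence $\dim H_{1}=\dim H_{2}=1$, and with $A_{1}\ne 0$ this is case (3).

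There remains the case $\lambda=0$, that is $A_{2}A_{1}=0=A_{1}A_{2}$, which is the heart of the proof; relabelling the two vertices interchanges $A_{1}$ and $A_{2}$ and permutes cases (1) and (2), so we may assume $A_{1}\ne 0$. First I would show $\ker A_{1}=0$: since $A_{1}\ne 0$, the subspace $\ker A_{1}$ is proper and closed in $H_{1}$ and contains $\Im A_{2}$ (because $A_{1}A_{2}=0$); if $\ker A_{1}\ne 0$, choose $0\ne v\in\ker A_{1}$ and $0\ne w$ in the (nonzero) orthogonal complement of $\overline{\Im A_{2}}$, and then $(\theta_{v,w},0)$ is a nonzero, non-scalar endomorphism, a contradiction. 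From $\ker A_{1}=0$ and $A_{1}A_{2}=0$ it follows that $A_{2}=0$. Next, $A_{1}$ has dense range, for otherwise $(0,\theta_{\eta,\eta})$ with $0\ne\eta\perp\Im A_{1}$ is a non-scalar endomorphism. The one genuinely delicate step is to upgrade ``dense range'' to ``surjective'': take the polar decomposition $A_{1}=UP$ with $P=|A_{1}|\ge 0$ injective and $U\colon H_{1}\to H_{2}$ unitary (legitimate because $\ker A_{1}=0$ and $\overline{\Im A_{1}}=H_{2}$); if $A_{1}$ were not onto, then $P$ would fail to be invertible, hence not be a scalar, and $(P,UPU^{*})$ would be a non-scalar endomorphism, contradicting transitivity. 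Therefore $A_{1}$ is invertible, $End(H,f)\cong B(H_{1})$, forcing $\dim H_{1}=\dim H_{2}=1$ with $A_{1}\ne 0$, which is case (3). I expect this surjectivity step — excluding an injective operator with dense but non-closed range — to be the only real obstacle; everything else is routine manipulation of the intertwining relations and of rank-one operators $\theta_{x,y}$.
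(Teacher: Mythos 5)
Your proof is correct, but it takes a genuinely different route from the paper's. The paper first invokes Lemma \ref{lemma:spacedecomposition} to realize (after reordering) $H_{1}$ as a subspace of $H_{2}$, and then runs its signature substitution argument: starting from the endomorphism $(A_{2}A_{1},A_{1}A_{2})$ it successively replaces the $A_{i}$ by the inclusion $E_{1}$ and the orthogonal projection $E_{2}$, obtaining endomorphisms $(A_{2}E_{1},E_{1}A_{2})$, $(E_{2}A_{1},A_{1}E_{2})$ and $(E_{2}E_{1},E_{1}E_{2})$; transitivity forces each to be a scalar, which yields $H_{1}=H_{2}$, $E_{1}=E_{2}=I$, and finally that $A_{1},A_{2}$ are themselves scalars, so Lemma \ref{lemma 2C} gives $\dim H_{i}=1$. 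You share only the first step (that $(A_{2}A_{1},A_{1}A_{2})$ is an endomorphism, hence $A_{2}A_{1}=\lambda I$, $A_{1}A_{2}=\lambda I$) and then branch on $\lambda$: for $\lambda\neq 0$ the operator $A_{1}$ is invertible and $End(H,f)\cong B(H_{1})$ at once; for $\lambda=0$ you use rank-one endomorphisms $(\theta_{v,w},0)$ and $(0,\theta_{\eta,\eta})$ to force $\ker A_{1}=0$, $A_{2}=0$ and $\overline{\Im A_{1}}=H_{2}$, and then the polar decomposition $A_{1}=UP$ to rule out dense but non-closed range, since $(P,UPU^{*})$ would otherwise be a non-scalar endomorphism. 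Your argument is more self-contained operator theory for $C_{2}$ — it needs neither the ambient-space Lemma \ref{lemma:spacedecomposition} nor Lemma \ref{lemma 2C} — and the polar-decomposition step is a nice touch the paper never needs; the price is that the paper's $E_{i}$-substitution scheme is exactly the template that it then iterates verbatim for $C_{3}$ and general $C_{n}$ in Lemma \ref{lemma:cyclictransitive}, whereas your case split on $\lambda$ and the polar-decomposition trick do not scale as transparently to longer cycles.
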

\begin{proof}
If  (1),(2) or (3) holds, then $(H,f)$ is clearly transitive.
Conversely
assume that $(H,f)$ is transitive.
Assume that $\dim H_{1}\ne 0$ and $\dim H_{2}=0$.
If $\dim H_{1}>1$,
then there exists a non-scalar operator in $B(H_{1})$.
Since $B(H_{1})=End(H,f)$,
this contradicts the transitivity of $(H,f)$. 
Hence $\dim H_{1}=1$. This is the case (1).
Similarly
we have the case (2).
Therefore it is sufficient to assume 
that $\dim H_{1}\ne 0$ and $\dim H_{2}\ne 0$.
By Lemma \ref{lemma:spacedecomposition} we may assume that $\dim H_{1}\leq \dim H_{2}$ and
$H_{1}$ is a subspace of $H_{2}$.
We define 
$$T
=(T_{1},T_{2})
=(A_{2}A_{1},A_{1}A_{2}).$$
Then 
$T\in End(H,f)$.
In fact
$$A_{1}T_{1}=A_{1}(A_{2}A_{1})=
(A_{1}A_{2})A_{1}=T_{2}A_{1}$$
and
$$T_{1}A_{2}=(A_{2}A_{1})A_{2}=
A_{2}(A_{1}A_{2})=A_{2}T_{2}.$$
By the assumption of transitivity for $(H,f)$,
$$(T_{1},T_{2})\in \{(\mu I_{H_{1}},
\mu I_{H_{2}})\vert \mu\in \mathbb{C}\}.$$
Hence
$$T_{1}=A_{2}A_{1}=\mu I_{H_{1}}
,T_{2}=A_{1}A_{2}=\mu I_{H_{2}}
\text{ for some } \mu\in \mathbb{C}.$$
We denote by $E_{1}\in B(H_{1},H_{2})$
the embedding map of $H_{1}$ into $H_{2}$
and
$E_{2}\in B(H_{2},H_{1})$ 
the projection map of $H_{2}$ onto $H_{1}$.
We define
$$T^{\{1\}}
=(T^{\{1\}}_{1},T^{\{1\}}_{2})
=(A_{2}E_{1},E_{1}A_{2}).$$
Then 
$T^{\{1\}}\in End(H,f)$.
In fact
\begin{align*}
A_{1}T_{1}^{\{1\}}
&=
A_{1}(A_{2}E_{1})=(A_{1}A_{2})E_{1}=
\mu I_{H_{2}}E_{1}
\\
&=\mu E_{1}=
E_{1}\mu I_{H_{1}}
=(E_{1}A_{2})A_{1}
=T^{\{1\}}_{2}A_{1}.
\end{align*}

$$T_{1}^{\{1\}}A_{2}=
(A_{2}E_{1})A_{2}=A_{2}(E_{1}A_{2})
=A_{2}T^{\{1\}}_{2}.$$
Thus $T^{\{1\}}\in End(H,f)$.
Since $(H,f)$ is transitive,
there exists a constant
$\mu^{\{1\}}\in \mathbb{C}$
such that
$$A_{2}E_{1}=\mu^{\{1\}}I_{H_{1}}
\text { and }
E_{1}A_{2}=\mu^{\{1\}}I_{H_{2}}.$$
We define $$T^{\{2\}}
=(T^{\{2\}}_{1},T^{\{2\}}_{2})
=(E_{2}A_{1},A_{1}E_{2}).$$
Then 
$T^{\{2\}}\in End(H,f)$.
In fact,
$$A_{1}T_{1}^{\{2\}}=A_{1}(E_{2}A_{1})
=(A_{1}E_{2})A_{1}=T^{\{2\}}_{2}A_{1}.$$
\begin{align*}
T_{1}^{\{2\}}A_{2}
&=(E_{2}A_{1})A_{2}
=E_{2}(\mu I_{H_{2}})
=\mu E_{2}\\
&=\mu I_{H_{1}}E_{2}
=A_{2}(A_{1}E_{2})=A_{2}T^{\{2\}}_{2}.
\end{align*}

Since $(H,f)$ is transitive,
there exists a constant
$\mu^{\{2\}}\in \mathbb{C}$
such that
$E_{2}A_{1}=\mu^{\{2\}}I_{H_{1}}$ and
$A_{1}E_{2}=\mu^{\{2\}}I_{H_{2}}$. 

We define
 $$T^{\{1,2\}}
=(T^{\{1,2\}}_{1},T^{\{1,2\}}_{2})
=(E_{2}E_{1},E_{1}E_{2}).$$
Then 
$T^{\{1,2\}}\in End(H,f)$.
In fact,
\begin{align*}
&A_{1}T_{1}^{\{1,2\}}
=
A_{1}(E_{2}E_{1})
=(A_{1}E_{2})E_{1}
=\mu^{\{2\}}I_{H_{2}}E_{1}
=\mu^{\{2\}}E_{1}\\
&=\mu^{\{2\}}E_{1}=E_{1}(\mu^{\{2\}}I_{H_{1}})
=E_{1}(E_{2}A_{1})
=
T^{\{1,2\}}_{2}A_{1},\\
&T_{1}^{\{1,2\}}A_{2}
=(E_{2}E_{1})A_{2}
=
E_{2}(\mu^{\{1\}}I_{H_{2}})
=\mu^{\{1\}}E_{2}
\\
&=\mu^{\{1\}}E_{2}=(\mu^{\{1\}}I_{H_{1}})E_{2}
=A_{2}(E_{1}E_{2})
=A_{2}T^{\{1,2\}}_{2}.
\end{align*}
Since $(H,f)$ is transitive,
there exists a constant
$\mu^{\{1,2\}}\in \mathbb{C}$
such that
$$E_{2}E_{1}=\mu^{\{1,2\}}I_{H_{1}}
\text{ and }
E_{1}E_{2}=\mu^{\{1,2\}}I_{H_{2}}.$$
For $x(\ne 0)\in H_{1}$, we have  $x=E_{2}E_{1}x=\mu^{\{1,2\}}I_{H_{1}}x=\mu^{\{1,2\}}x$.
Hence $\mu^{\{1,2\}}=1$.
If $H_{1}\ne H_{2}$,then $H_{1}^{\perp}\cap H_{2}\ne 0$.
Take $x(\ne 0)\in H_{1}^{\perp}\cap H_{2}$.
Then $E_{1}E_{2}x=\mu^{\{1,2\}}I_{H_{2}}x$.
Hence $0=x$. This is a contradiction.
Thus $H_{1}=H_{2}$ and 
$E_{1}=E_{2}$.
Since 
$A_{1}E_{2}=\mu^{\{2\}}I_{H_{2}}$,
$A_{1}=\mu^{\{2\}}I_{H_{1}}$.
And we also have 
$E_{1}A_{2}=A_{2}=\mu^{\{1\}}I_{H_{1}}$.
Since  
$(H,f)$ is transitive,
$A_{1}\ne0$ or $A_{2}\ne0$.
By Lemma \ref{lemma 2C}, we have  $H_{1}=H_{2}=\mathbb{C}$.
Thus $(H,f)$ is  in the case (3).
 
\end{proof}

Let $(H,f)$ be a Hilbert representation of the oriented cyclic quiver $C_{3}$.
In the below we denote 
$f_{\alpha_{1}}$,
$f_{\alpha_{2}}$,
$f_{\alpha_{3}}$ 
by $A_{1},A_{2},A_{3}$ for short.

\begin{lemma}
Let $(H,f)$ be a transitive Hilbert representation of $C_{3}$.
Assume that $H_{i}=\mathbb{C}(i=1,2,3)$ .
Then $A_{i}A_{j}\ne 0$ for some $i\ne j$.
\label{lemma-component}
\end{lemma}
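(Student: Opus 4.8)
The plan is to argue by contradiction: assume that $A_iA_j=0$ for every pair $i\ne j$. Since $H_i=\mathbb{C}$ for $i=1,2,3$, each $A_i=f_{\alpha_i}$ is a scalar, and scalars commute, so this assumption is equivalent to the statement that at most one of $A_1,A_2,A_3$ is nonzero. In particular at least one of $f_{\alpha_1}$ and $f_{\alpha_2}$ vanishes: if $A_1\ne 0$ then $A_2=A_3=0$, if $A_2\ne 0$ then $A_1=A_3=0$, and otherwise all three are $0$; in every case $f_{\alpha_1}=0$ or $f_{\alpha_2}=0$.

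Next I would invoke Lemma \ref{lemma:component}, which applies because $H_i=\mathbb{C}$ for every $i$. By its defining relation, the $(H,f)$-connected components of $\{1,2,3\}$ are the maximal runs of consecutive vertices linked by nonvanishing arrows among $f_{\alpha_1},f_{\alpha_2}$ (the arrow $f_{\alpha_3}$ never enters the definition for $C_3$). If $f_{\alpha_1}=0$, then vertex $1$ is connected neither to vertex $2$ nor to vertex $3$ (the latter would require both $f_{\alpha_1}\ne0$ and $f_{\alpha_2}\ne0$), so $\{1\}$ is a component by itself; symmetrically, if $f_{\alpha_2}=0$ then $\{3\}$ is a component by itself. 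Either way there are at least two distinct $(H,f)$-connected components, so Lemma \ref{lemma:component} says $(H,f)$ is not transitive, contradicting the hypothesis. Hence $A_iA_j\ne 0$ for some $i\ne j$.

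A self-contained alternative, avoiding Lemma \ref{lemma:component}, is to produce a non-scalar endomorphism directly: under the assumption that at most one $A_i$ is nonzero, the intertwining relations $f_{\alpha_i}T_{s(\alpha_i)}=T_{r(\alpha_i)}f_{\alpha_i}$ collapse to at most one nontrivial constraint of the form $T_k=T_{k+1}$, so $\dim End(H,f)\ge 2$ and $End(H,f)$ contains a non-scalar triple. I do not expect a genuine obstacle here; the only point needing attention is the short case analysis over which (if any) of $A_1,A_2,A_3$ is nonzero --- the cyclic symmetry of $C_3$ does not quite reduce this to one case because the definition of $(H,f)$-connectedness singles out $\alpha_3$ --- but each of the four cases is verified in one line.
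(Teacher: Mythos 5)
Your proposal is correct, and your primary argument takes a genuinely different route from the paper's. The paper proves the lemma directly: after noting that the negation of the conclusion means $A_i=A_j=0$ for some $i\ne j$ and reducing by cyclic symmetry to $A_1=A_2=0$, it exhibits the explicit non-scalar endomorphism $T=(T_1,T_2,T_3)$ with $T_1=T_3\ne 0$ and $T_2\ne T_1$, checks the three intertwining relations (the first two are trivially $0=0$, the third holds because $T_1=T_3$ are scalars), and contradicts transitivity. That is exactly your ``self-contained alternative'': your dimension count $\dim End(H,f)\ge 2$ is the same observation. Your main route instead quotes Lemma \ref{lemma:component}, which is legitimate since that lemma precedes this one and its proof does not depend on the present statement; what this buys is uniformity (the same component count works for any $C_n$), at the cost of leaning on the exact wording of the $(H,f)$-connectedness definition. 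One caveat there: you correctly observe that, as literally written, the definition never consults $f_{\alpha_3}$, but that literal reading would make Lemma \ref{lemma:component} itself false (e.g.\ $A_1=A_3=1$, $A_2=0$ gives a transitive representation that nevertheless splits into the ``components'' $\{1,2\}$ and $\{3\}$); the definition has to be read so that the wrap-around arrow $\alpha_n$ also links $n$ to $1$. Under that corrected reading, your claim that $f_{\alpha_1}=0$ forces $\{1\}$ to be a singleton component can fail when $f_{\alpha_3}\ne 0$; however, your standing hypothesis that at most one of $A_1,A_2,A_3$ is nonzero still leaves at least two components in every case, since a single nonzero arrow cannot join three vertices, so the argument survives with a one-line patch.
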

\begin{proof}
Assume that $A_{i}=A_{j}=0$ for some $i\ne j$.
We may and do assume $i=1,j=2$.
Let $T=(T_{1},T_{2},T_{3})$
such that $T_{1}=T_{3}$,$T_{2}\ne T_{1}$,
$T_{1}\ne 0$ and $T_{2}\ne 0$.
Then $T=(T_{1},T_{2},T_{3})$ is in $End(H,f)$.
Since $(H,f)$ is transitive,
 $T_{1}=T_{2}=T_{3}\in \mathbb{C}$.
This is a contradiction.
Hence this lemma holds.
\end{proof}


\begin{lemma}

Let $(H,f)$ be a Hilbert representation of $C_{3}$.
Then $(H,f)$ is transitive if and only if 
one of the following holds.

\begin{enumerate}

\item  
$H_{1}=
\mathbb{C}\text{ and }H_{i}=0(i=2,3)
$.
\item
$H_{2}=\mathbb{C}\text{ and }H_{i}=0(i=1,3)$.
\item
$H_{3}=
\mathbb{C}\text{ and }H_{i}=0(i=1,2)$.
\item
$H_{i}=\mathbb{C}(i=1,2),$$ H_{3}=0$ and
$A_{1}\ne 0
$.
\item
$H_{i}=\mathbb{C}(i=2,3),$$H_{1}=0$ and
$A_{2}\ne 0$.
\item
$H_{i}=\mathbb{C}(i=1,3),$$ H_{2}=0$ and
$A_{3}\ne 0$.
\item
$H_{i}=\mathbb{C}(i=1,2,3)$ and
$A_{i}A_{j}\ne 0$ for some $i\ne j(i,j=1,2,3)$.

\end{enumerate}

\end{lemma}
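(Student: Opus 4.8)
The two implications of the equivalence are proved separately. For sufficiency, observe that in each of (1)--(7) every space $H_i$ is $\mathbb C$ or $0$. In cases (1)--(3) we have $End(H,f)=B(\mathbb C)=\mathbb CI$. In case (4) --- and symmetrically in (5) and (6) --- the only non-zero arrow among the $A_k$ is a non-zero scalar joining the two vertices that carry $\mathbb C$, the component of an endomorphism at the zero vertex is forced to vanish, and the intertwining relation for that scalar arrow forces the remaining two components to coincide, so $End(H,f)\cong\mathbb C$. In case (7) the hypothesis that $A_iA_j\ne0$ for some $i\ne j$ means that two consecutive arrows are non-zero, so all three vertices lie in a single $(H,f)$-connected component and Lemma \ref{lemma:component} gives transitivity.

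For necessity, assume $(H,f)$ is transitive and suppose first that $H_k=0$ for some $k$. By Lemma \ref{lemma:End=} there is a Hilbert representation $(K,g)$ of $C_2$ with $End(K,g)\cong End(H,f)=\mathbb CI$, so $(K,g)$ is transitive, and its data $(K_i,g_{\alpha_i})$ are explicit functions of $(H_i,A_i)$. Applying to $(K,g)$ the preceding classification of transitive representations of $C_2$ and translating back shows, when $k=3$, that $(H,f)$ is of type (1), (2) or (4); the cyclic symmetry of $C_3$ disposes of $k=1,2$, so the case of a zero space produces exactly (1)--(6).

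It remains to show that a transitive $(H,f)$ with all $H_i\ne0$ has $\dim H_i=1$ for every $i$; granting this, Lemma \ref{lemma-component} forces $A_iA_j\ne0$ for some $i\ne j$, which is case (7). By associativity the triple $(A_3A_2A_1,\ A_1A_3A_2,\ A_2A_1A_3)$ lies in $End(H,f)$, so transitivity yields a single scalar $\mu$ with $A_3A_2A_1=\mu I_{H_1}$, $A_1A_3A_2=\mu I_{H_2}$ and $A_2A_1A_3=\mu I_{H_3}$. If $\mu\ne0$, then each $A_k$ admits two-sided inverses built from the other two, so $A_1,A_2,A_3$ are all invertible; applying the isomorphism $(I_{H_1},A_1^{-1},(A_2A_1)^{-1})$ makes $(H,f)$ isomorphic to the representation on $H_1\oplus H_1\oplus H_1$ with arrows $I,I,\mu I$, whose endomorphism algebra is $\{(T,T,T):T\in B(H_1)\}\cong B(H_1)$, whence transitivity forces $\dim H_1=1$.

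The case $\mu=0$, i.e.\ $A_3A_2A_1=A_1A_3A_2=A_2A_1A_3=0$, is the main obstacle, and I would treat it in the spirit of the (already intricate) proof of the $C_2$ classification. A convenient starting device is that, for \emph{any} bounded operator $E\colon H_3\to H_1$, the triple $(EA_2A_1,\ A_1EA_2,\ A_2A_1E)$ lies in $End(H,f)$ --- its only non-trivial intertwining relation reduces to $A_3A_2A_1=0$ and $A_2A_1A_3=0$ --- and the two cyclic rotations of this construction give further endomorphisms. Choosing $E$ of rank one and invoking transitivity forces each of these rank-$\le1$ operators to be scalar, hence $0$ whenever the space it acts on has dimension $\ge2$; tracing these identities shows that if every $\dim H_i\ge2$ then sufficiently many of the $A_k$ vanish that $End(H,f)$ contains a full $B(H_j)$, contradicting transitivity, so some $H_k$ must be one-dimensional. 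One then re-runs the argument on the resulting partly one-dimensional configuration --- now also invoking Lemma \ref{lemma:spacedecomposition} to realize the $H_i$ as nested subspaces and building endomorphisms from the arrows and the associated inclusion and orthogonal-projection maps, exactly as in the $C_2$ lemma, feeding each step's scalarity into the verification that the next triple is an endomorphism --- to push all three spaces down to $\mathbb C$. Carrying this bookkeeping through, namely choosing the right endomorphisms and the order in which their scalarity is extracted so that everything closes up, is the delicate part.
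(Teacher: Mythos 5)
Your sufficiency argument, your reduction to $C_2$ when some $H_k=0$, and your treatment of the sub-case $\mu\ne 0$ (where $A_3A_2A_1=\mu I_{H_1}$ etc.\ makes every $A_k$ invertible and the representation isomorphic to one with arrows $I,I,\mu I$, forcing $\dim H_1=1$) are all correct, and the last of these is a genuinely different and cleaner route than the paper's. The paper never splits on $\mu$: it realizes the $H_i$ as nested subspaces via Lemma \ref{lemma:spacedecomposition} and runs through all eight endomorphisms $T^S$, $S\subseteq\{1,2,3\}$, obtained by substituting inclusion/projection maps $E_i$ for the arrows $A_i$ with $i\in S$, extracting a scalar at each stage until $E_3E_2E_1=I_{H_1}$ forces $H_1=H_2=H_3=\mathbb{C}$.

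The genuine gap is the sub-case $\mu=0$, which you yourself flag as ``the main obstacle'' and then only sketch, ending with the admission that ``carrying this bookkeeping through \dots is the delicate part.'' As written, you only claim a contradiction when \emph{every} $\dim H_i\ge 2$, and you defer the mixed configurations to an unexecuted ``re-run'' involving nested subspaces and projections; that deferral is exactly where a complete proof is required, so the proof is not finished. For what it is worth, the gap is fillable within your own framework, and more easily than you suggest: if \emph{any one} of the three nonzero spaces has dimension $\ge 2$, then for every rank-one $E\colon H_3\to H_1$ the scalar $\lambda_E$ with $(EA_2A_1,A_1EA_2,A_2A_1E)=\lambda_E I$ must vanish (one of the three identities equates a rank-$\le 1$ operator with $\lambda_E$ times the identity of a space of dimension $\ge 2$), and letting $E$ range over all rank-one maps yields $A_1=0$ or $A_2=0$; the two cyclic rotations give $A_2=0$ or $A_3=0$ and $A_3=0$ or $A_1=0$, so at least two of the three arrows vanish, and then a tuple such as $(I_{H_1},0,0)$ (placed at a vertex not touched by the surviving arrow's two intertwining relations) is a manifestly non-scalar endomorphism, contradicting transitivity. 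Hence all $\dim H_i=1$ with no further case analysis. You need to actually write out this (or some) completion of the $\mu=0$ case before the necessity direction can be considered proved.
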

\begin{proof}

If a Hilbert representations $(H,f)$ satisfies 
(1),(2),$\cdots$ or (7), then 
the Hilbert representation is obviously transitive.
Conversely assume that $(H,f)$ is transitive.
At first we assume that 
all Hilbert spaces $H_{i}\ne 0 (1\leq i\leq 3)$
and by Lemma \ref{lemma:spacedecomposition}
a totally ordered set by inclusion order and
$H_{1}\subset H_{i}(i=2,3)$.
We define
$$ T_{1}= A_{3}A_{2}A_{1},
T_{2}= A_{1}A_{3}A_{2},
T_{3}= A_{2}A_{1}A_{3}\text{ and }
T=
(T_{1},
T_{2},T_{3}).$$
We define a mapping 
$E_{i}\in B(H_{i},H_{i+1})$ by
$$
E_{i}= 
\begin{cases}\text{ the inclusion map of } H_{i} \text{ into } H_{i+1}
&  \  \text{if }  H_{i}\subset  H_{i+1},  \\
 \text{ the projection map of } H_{i} \text{ onto } H_{i+1} 
&  \   \text{ if }H_{i+1}\subset H_{i}.
\end{cases}
$$
For a subset $S$ of $\{1,2,3\},$
we define $B_{i}\in B(H_{i},H_{i+1})$ by

$$
B_{i}= 
\begin{cases} A_{i}&  \ \ \text{ if }i\notin S,  \\
  E_{i}&  \ \  \text{ if }i\in S.
\end{cases}
$$
We also define
$$T_{1}^{S}=B_{3}B_{2} B_{1},
T_{2}^{S}=B_{1}B_{3} B_{2},
T_{3}^{S}=B_{2}B_{1} B_{3}
\text{ and } 
T^{S}=(T_{1}^{S},T_{2}^{S},T_{3}^{S}).$$
We note that $T^{S}=(T_{1}^{S},T_{2}^{S},T_{3}^{S})$ 
is obtained by replacing each word $A_{i}$
in $T=(T_{1},T_{2},T_{3})$
with
$E_{i}$ for all $i\in S$.
We regard $T=(T_{1},T_{2},T_{3})$
as $T^{\emptyset}=
(T^{\emptyset}_{1},T^{\emptyset}_{2},T^{\emptyset}_{3})$.
Since
$$A_{1}T_{1}
=A_{1}(A_{3}A_{2}A_{1})
=T_{2}A_{1},$$
$$A_{2}T_{2}
=A_{2}(A_{1}A_{3}A_{2})
=T_{3}A_{2},$$
$$A_{3}T_{3}
=A_{3}(A_{2}A_{1}A_{3})
=
T_{1}A_{3},$$
we have that
$T$ is in   $End(H,f)$.
Since $(H,f)$ is transitive,
there exists a constant $\mu\in \mathbb{C}$
such that 
$$A_{3}A_{2}A_{1}=\mu I_{H_{1}},
A_{1}A_{3}A_{2}=\mu I_{H_{2}},
A_{2}A_{1}A_{3}=\mu I_{H_{3}}.$$
For $S=\{1\}$, we define
$T^{S}=T^{\{1\}}=(T^{\{1\}}_{1},T^{\{1\}}_{2},T^{\{1\}}_{3})$ by
$$T^{\{1\}}_{1}= A_{3}A_{2}E_{1},
T^{\{1\}}_{2}= E_{1}A_{3}A_{2},
T^{\{1\}}_{3}= A_{2}E_{1}A_{3}.$$
It follows that
\begin{align*}
&A_{1}T_{1}^{\{1\}}
=A_{1}A_{3}A_{2}E_{1}
=\mu I_{H_{2}}E_{1}=\mu E_{1} \\
&=E_{1}\mu I_{H_{1}}
=E_{1}(A_{3}A_{2})A_{1}=T_{2}^{\{1\}}A_{1},\\
&A_{2}T_{2}^{\{1\}}=A_{2}E_{1}(A_{3}A_{2})
=T_{3}^{\{1\}}A_{2},\\
&A_{3}T_{3}^{\{1\}}=A_{3}A_{2}E_{1}A_{3}
=T_{1}^{\{1\}}A_{3}.
\end{align*}
Thus $T^{\{1\}}$ is in $ End(H,f)$.
Since $(H,f)$ is transitive,
there exists a constant $\mu^{\{1\}}\in \mathbb{C}$
such that
$$A_{3}A_{2}E_{1}=\mu^{\{1\}} I_{H_{1}},
E_{1}A_{3}A_{2}=\mu^{\{1\}} I_{H_{2}},
,A_{2}E_{1}A_{3}=\mu^{\{1\}} I_{H_{3}}.$$
For $S=\{2\}$,
we define
$T^{S}=T^{\{2\}}=(T^{\{2\}}_{1},T^{\{2\}}_{2},T^{\{2\}}_{3})$ by
$$T^{\{2\}}_{1}= A_{3}E_{2}A_{1},
T^{\{2\}}_{2}= A_{1}A_{3}E_{2},
T^{\{2\}}_{3}= E_{2}A_{1}A_{3}.$$
It follows that
$$A_{1}T_{1}^{\{2\}}=A_{1}A_{3}E_{2}A_{1}
=T_{2}^{\{2\}}A_{1},$$

\begin{align*}
A_{2}T_{2}^{\{2\}}
&=A_{2}A_{1}A_{3}E_{2}
=\mu I_{H_{3}} E_{2}
=\mu E_{2}\\
&=E_{2}\mu I_{H_{2}}
=E_{2}A_{1}A_{3}A_{2}=T_{3}^{\{2\}}A_{2},
\end{align*}
$$A_{3}T_{3}^{\{2\}}=A_{3}E_{2}A_{1}A_{3}
=T_{1}^{\{2\}}A_{3}.$$
Thus 
$T^{\{2\}}$ is in  $End(H,f)$.
Since $(H,f)$ is transitive,
there exists a constant
$\mu^{\{2\}}\in \mathbb{C}$
such that
$$A_{3}E_{2}A_{1}=\mu^{\{2\}} I_{H_{1}},
A_{1}A_{3}E_{2}=\mu^{\{2\}} I_{H_{2}},
,(E_{2}A_{1})A_{3}=\mu^{\{2\}} I_{H_{3}}.$$
For
$S=\{3\}$, we define
$T^{S}=T^{\{3\}}=(T^{\{3\}}_{1},T^{\{3\}}_{2},T^{\{3\}}_{3})$ by
$$T^{\{3\}}_{1}= E_{3}A_{2}A_{1},
T^{\{3\}}_{2}= A_{1}E_{3}A_{2},
T^{\{3\}}_{3}= A_{2}A_{1}E_{3}.$$
It follows that
$$A_{1}T_{1}^{\{3\}}
=A_{1}E_{3}A_{2}A_{1}
=T_{2}^{\{3\}}A_{1},$$
$$A_{2}T_{2}^{\{3\}}
=A_{2}A_{1}E_{3}A_{2}
=T_{3}^{\{3\}}A_{2},$$

\begin{align*}
A_{3}T_{3}^{\{3\}}
&=A_{3}A_{2}A_{1}E_{3}
=\mu I_{H_{1}}E_{3}
=\mu E_{3} \\
&=E_{3}\mu I_{H_{3}}
=E_{3}A_{2}A_{1}A_{3}
=T_{1}^{\{3\}}A_{3}.
\end{align*}
Thus 
$T^{\{3\}}$ is in  $End(H,f)$.
Since $(H,f)$ is transitive,
there exists a constant
$\mu^{\{3\}}\in \mathbb{C}$
such that
$$E_{3}A_{2}A_{1}=\mu^{\{3\}} I_{H_{1}},
A_{1}E_{3}A_{2}=\mu^{\{3\}} I_{H_{2}},
,A_{2}A_{1}E_{3}=\mu^{\{3\}} I_{H_{3}}.$$
For $S=\{1,2\}$, we have
$$T^{\{1,2\}}=(T^{\{1,2\}}_{1},
T^{\{1,2\}}_{2},
T^{\{1,2\}}_{3})
=(A_{3}E_{2}E_{1},
E_{1}A_{3}E_{2},
E_{2}E_{1}A_{3}).$$
It follows that 
\begin{align*}
A_{1}T_{1}^{\{1,2\}}
&=A_{1}A_{3}E_{2}E_{1}
=\mu^{\{2\}}I_{H_{2}}E_{1}=\mu^{\{2\}}E_{1}\\
&=E_{1}\mu^{\{2\}}I_{H_{1}}
=E_{1}A_{3}E_{2}A_{1}
=T_{2}^{\{1,2\}}A_{1}
,
\end{align*}
\begin{align*}
A_{2}T_{2}^{\{1,2\}}
&=A_{2}E_{1}A_{3}E_{2}
=\mu^{\{1\}}I_{H_{3}}E_{2}
=\mu^{\{1\}}E_{2}\\
&=E_{2}\mu^{\{1\}}I_{H_{2}}
=E_{2}E_{1}A_{3}A_{2}
=T_{3}^{\{1,2\}}A_{2}
,
\end{align*}

$$A_{3}T_{3}^{\{1,2\}}
=A_{3}E_{2}E_{1}A_{3}
=T_{1}^{\{1,2\}}A_{3}
.$$
Thus 
$T^{\{1,2\}}$ is in  $End(H,f)$.
Since $(H,f)$ is transitive,
there exists a constant
$\mu^{\{1,2\}}\in \mathbb{C}$
such that
$$A_{3}E_{2}E_{1}=\mu^{\{1,2\}}I_{H_{1}},
E_{1}A_{3}E_{2}
=\mu^{\{1,2\}}I_{H_{2}},
E_{2}E_{1}A_{3}
=\mu^{\{1,2\}}I_{H_{3}}.$$
For $S=\{1,3\}$, we have
$$T^{\{1,3\}}=(T^{\{1,3\}}_{1},T^{\{1,3\}}_{2},T^{\{1,3\}}_{3})=
(E_{3}A_{2}E_{1},
E_{1}E_{3}A_{2},
A_{2}E_{1}E_{3}).$$
It follows that
\begin{align*}
A_{1}T_{1}^{\{1,3\}}
&=A_{1}E_{3}A_{2}E_{1}
=\mu^{\{3\}}I_{H_{2}}E_{1}
=\mu^{\{3\}}E_{1}\\
&=E_{1}\mu^{\{3\}}I_{H_{1}}
=E_{1}E_{3}A_{2}A_{1}
=T_{2}^{\{1,3\}}A_{1}.
\end{align*}
$$A_{2}T_{2}^{\{1,3\}}
=A_{2}E_{1}E_{3}A_{2}
=T_{3}^{\{1,3\}}A_{2}
,$$
\begin{align*}
A_{3}T_{3}^{\{1,3\}}
&=A_{3}A_{2}E_{1}E_{3}
=\mu^{\{1\}}I_{H_{1}}E_{3}
=\mu^{\{1\}}E_{3}\\
&=E_{3}\mu^{\{1\}}I_{H_{3}}
=E_{3}A_{2}E_{1}A_{3}
=T_{1}^{\{1,3\}}A_{3}
.
\end{align*}
Thus 
$T^{\{1,3\}}$ is in $End(H,f)$.
Since $(H,f)$ is transitive,
there exists a constant
$\mu^{\{1,3\}}\in \mathbb{C}$
such that
$$E_{3}A_{2}E_{1}=\mu^{\{1,3\}}I_{H_{1}}
,
E_{1}E_{3}A_{2}
=\mu^{\{1,3\}}I_{H_{2}},
A_{2}E_{1}E_{3}
=\mu^{\{1,3\}}I_{H_{3}}
.$$
For $S=\{2,3\}$,we have
$$T^{\{2,3\}}=
(T^{\{2,3\}}_{1},T^{\{2,3\}}_{2},T^{\{2,3\}}_{3})
=(E_{3}E_{2}A_{1},A_{1}E_{3}E_{2},E_{2}A_{1}E_{3}).$$
It follows that
$$A_{1}T_{1}^{\{2,3\}}
=A_{1}E_{3}E_{2}A_{1}
=T_{2}^{\{2,3\}}A_{1}.$$
\begin{align*}
A_{2}T_{2}^{\{2,3\}}
&=A_{2}A_{1}E_{3}E_{2}
=\mu^{\{3\}}I_{H_{3}}E_{2}=\mu^{\{3\}}E_{2}\\
&=E_{2}\mu^{\{3\}}I_{H_{2}}
=E_{2}A_{1}E_{3}A_{2}
=T_{3}^{\{2,3\}}A_{2}
,
\end{align*}
\begin{align*}
A_{3}T_{3}^{\{2,3\}}
&=A_{3}E_{2}A_{1}E_{3}
=\mu^{\{2\}}I_{H_{1}}E_{3}=\mu^{\{2\}}E_{3}\\
&=E_{3}\mu^{\{2\}}I_{H_{3}}
=E_{3}E_{2}A_{1}A_{3}
=T_{1}^{\{2,3\}}A_{3}
.
\end{align*}
Thus 
$T^{\{2,3\}}$ is in $End(H,f)$.
Since $(H,f)$ is transitive,
there exists a constant
$\mu^{\{2,3\}}\in \mathbb{C}$
such that
$$E_{3}E_{2}A_{1}=\mu^{\{2,3\}}I_{H_{1}}
,
A_{1}E_{3}E_{2}
=\mu^{\{2,3\}}I_{H_{2}},
E_{2}A_{1}E_{3}
=\mu^{\{2,3\}}I_{H_{3}}
.$$
For $S=\{1,2,3\}$,we have
$$T^{\{1,2,3\}}=
(T^{\{1,2,3\}}_{1},T^{\{1,2,3\}}_{2},T^{\{1,2,3\}}_{3})
=(E_{3}E_{2}E_{1},E_{1}E_{3}E_{2},E_{2}E_{1}E_{3}).$$
It follows that
\begin{align*}
A_{1}T_{1}^{\{1,2,3\}}
&=A_{1}E_{3}E_{2}E_{1}
=\mu^{\{2,3\}}I_{H_{1}}E_{1}=\mu^{\{2,3\}}E_{1}\\
&=E_{1}\mu^{\{2,3\}}I_{H_{1}}
=E_{1}E_{3}E_{2}A_{1}
=T_{2}^{\{1,2,3\}}A_{1}.
\end{align*}
\begin{align*}
A_{2}T_{2}^{\{1,2,3\}}
&=A_{2}E_{1}E_{3}E_{2}
=\mu^{\{1,3\}}I_{H_{3}}E_{2}=\mu^{\{1,3\}}E_{2}\\
&=E_{2}\mu^{\{1,3\}}I_{H_{2}}
=E_{2}E_{1}E_{3}A_{2}
=T_{3}^{\{1,2,3\}}A_{2}
,
\end{align*}
\begin{align*}
A_{3}T_{3}^{\{1,2,3\}}
&=A_{3}E_{2}E_{1}E_{3}
=\mu^{\{1,2\}}I_{H_{1}}E_{3}=\mu^{\{1,2\}}E_{3}\\
&=E_{3}\mu^{\{1,2\}}I_{H_{3}}
=E_{3}E_{2}E_{1}A_{3}
=T_{1}^{\{1,2,3\}}A_{3}
.
\end{align*}
Thus 
$T^{\{1,2,3\}}$ is in  $End(H,f)$.
Since $(H,f)$ is transitive,
there exists a constant
$\mu^{\{1,2,3\}}\in \mathbb{C}$
such that
$$E_{3}E_{2}E_{1}=\mu^{\{1,2,3\}}I_{H_{1}}
,
E_{1}E_{3}E_{2}
=\mu^{\{1,2,3\}}I_{H_{2}},
E_{2}E_{1}E_{3}
=\mu^{\{1,2,3\}}I_{H_{3}}
.$$
Take $x(\ne 0)\in H_{1}$.
Since $H_{1}\subset H_{i}(1\leq i\leq 3)$,

$E_{3}E_{2}E_{1}x=x=\mu^{\{1,2,3\}}I_{H_{1}}x
.$
 Hence $\mu^{\{1,2,3\}}=1$.
By Lemma \ref{lemma:spacedecomposition},
we can represent $H_{i}$ by
$H_{i}=K_{1}\oplus K_{2}\oplus \cdots \oplus K_{m(i)}(1\leq i\leq 3)$.
We shall show that
$$H_{1}=H_{2}=H_{3}.$$
Now, $m(1)=1$ and
assume that $m(2)\ne 1$.
We compare $m(3)$ with $m(2)$.
Assume that
$m(3)<m(2)$.
Take $x(\ne 0)\in K_{m(2)}\subset H_{2}$.
Then
$$E_{2}x=0.$$
This contradicts that
$$E_{1}E_{3}E_{2}
=I_{H_{2}}.$$
Assume that
$m(3)\geq m(2)$.
Take $x(\ne 0)\in K_{m(2)}\subset H_{2}$.
Then
$$E_{2}x=x 
\text{  and  }
E_{3}x=0.$$
This contradicts that
$$E_{1}E_{3}E_{2}
=I_{H_{2}}.$$

Hence $m(1)=m(2)$ and $H_{1}=H_{2}$.
Next assume that
$H_{3}\ne H_{1}$ (hence $m(3)\ne 1$).
Take $x(\ne 0)\in K_{m(3)}\subset H_{3}$.
Then
$$E_{3}x=0.$$
This contradicts that
$$E_{2}E_{1}E_{3}
=I_{H_{3}}.$$
Hence we have 
that
$$H_{1}=H_{2}=H_{3}:=M.$$
Therefore
$$E_{1}=E_{2}=E_{3}=I_{M} \text{ and }
T_{i}^{\{1,2,3\}}=I_{M}\in \mathbb{C}.$$
Since 
$$E_{3}E_{2}A_{1}
=\mu^{\{2,3\}} I_{H_{1}},
E_{1}E_{3}A_{2}
=\mu^{\{1,3\}} I_{H_{2}},
\text{ and }
E_{2}E_{1}A_{3}
=\mu^{\{1,2\}}I_{H_{3}},$$
$$A_{1}=\mu^{\{2,3\}} I_{M},
A_{2}=\mu^{\{1,3\}} I_{M},
\text{ and }
A_{3}=\mu^{\{1,2\}} I_{M}.$$
If $\dim M>1$,
there is a non-scalar operator $B\in B(M)$.
Since $A_{1},A_{2},A_{3}$ are scalar operators,
$(B,B,B)\in End(H,f)$.
This contradicts that $(H,f)$ is transitive.
Hence we have $\dim M=1$.
By Lemma \ref{lemma-component},$A_{i}A_{j}\ne 0$ for some $i\ne j(i,j=1,2,3)$.
Thus $(H,f)$ is in the case (7).

Next we consider other cases.
Assume that there exists  $H_{i}= 0$ for some $i$.
Since $(H,f)$ is transitive, the number $\vert\{i;H_{i}\ne0\}\vert$
is 1 or 2.
If  $\vert\{i;H_{i}\ne0\}\vert=1=\vert\{k\}\vert$,then
$\dim H_{k}=1$ because $(H,f)$ is transitive.
Hence these are in the cases (1),(2),(3).
If  $\vert\{i;H_{i}\ne0\}\vert=2=\vert\{k,\ell\}\vert, (k<\ell \mod 3)$,
then we consider the reduction $C_{2}$ of the quiver $C_{3}$
as it is shown in Lemma
\ref{lemma:End=}.
Let $(K,g)$ be the reduced Hilbert representation of $C_{2}$
from the Hilbert representation $(H,f)$ of $C_{2}$ by Lemma
\ref{lemma:End=}.
We have $End(H,f)\cong End(K,g)$.
Hence $End(K,g)$ is transitive.
By the same argument in the case (7),
we have $\dim H_{k}=\dim H_{\ell}=1$.
Since $(H,f)$ is transitive, $A_{k}\ne0$.
Thus these are in the cases (4),(5),(6).
All these cases are  summarized  as the existence of 
unique $(H,f)$-connected component by Lemma 
\ref{lemma:component}.
\end{proof}
Let $(H,f)$ be a Hilbert representation of $C_{n}$.
In the below we denote 
$f_{\alpha_{1}},
f_{\alpha_{2}},\cdots,
f_{\alpha_{n}}$ by $A_{1},A_{2},\cdots,A_{n}$
for short.

\begin{lemma}
Let $(H,f)$
be a Hilbert representation of the oriented cyclic quiver $C_{n}$.
Then $(H,f)$ is transitive if and only if 
$H_{i}=\mathbb{C}$ or 0 and there  exists  only one $(H,f)$-connected 
component in $\{i \in V ; H_i \not= 0\}$. 
\label{lemma:cyclictransitive}
\end{lemma}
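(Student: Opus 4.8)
The plan is to reduce everything to Lemma~\ref{lemma:component}, so that the only substantive point is to show that a transitive Hilbert representation $(H,f)$ of $C_n$ automatically satisfies $\dim H_i\le 1$ for every vertex $i$. Indeed, if $H_i=\mathbb{C}$ or $0$ for all $i$ and there is a unique $(H,f)$-connected component, then $(H,f)$ is transitive by Lemma~\ref{lemma:component}; conversely, once a transitive $(H,f)$ is known to have every $H_i$ equal to $\mathbb{C}$ or $0$, Lemma~\ref{lemma:component} supplies the unique component. So I would first dispose of zero spaces: if some $H_k=0$, apply Lemma~\ref{lemma:End=} repeatedly to pass to a Hilbert representation $(K,g)$ of $C_m$ with $1\le m\le n$, all $K_i\ne 0$, and $End(K,g)\cong End(H,f)$ (the case $m=0$ would be the zero representation, which is excluded). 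Since the nonzero spaces of $(H,f)$ are exactly the spaces of $(K,g)$, it suffices to treat the case where every $H_i\ne 0$.

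Assume now all $H_i\ne 0$. By Lemma~\ref{lemma:spacedecomposition} I may replace $(H,f)$ by an isomorphic representation in which the $H_i$ are nested subspaces of a common space, totally ordered by inclusion; fix a minimal one $H_{i_0}$, and for each $i$ (indices mod $n$, with $H_{n+1}:=H_1$) let $E_i\colon H_i\to H_{i+1}$ be the inclusion if $H_i\subset H_{i+1}$ and the projection of $H_i$ onto $H_{i+1}$ if $H_{i+1}\subset H_i$. For a subset $S$ of $V=\{1,\dots,n\}$ put $B_i=A_i$ for $i\notin S$ and $B_i=E_i$ for $i\in S$, and define $T^S=(T^S_i)_i$ by letting $T^S_i\colon H_i\to H_i$ be the length-$n$ cyclic word $B_{i-1}B_{i-2}\cdots B_{i+1}B_i$ read around the cycle starting at vertex $i$. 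The heart of the argument is the claim, proved by induction on $|S|$, that each $T^S$ lies in $End(H,f)$ and hence, by transitivity, equals $\mu^S$ times the identity for some scalar $\mu^S$. The base case $S=\emptyset$ is the identity $A_iT^\emptyset_i=A_i(A_{i-1}\cdots A_i)=T^\emptyset_{i+1}A_i$. For the inductive step one checks the intertwining relation $A_iT^S_i=T^S_{i+1}A_i$ vertex by vertex: at a vertex $i\notin S$ the same cancellation applies directly, while at a vertex $i\in S$ one uses $A_iT^S_i=(A_iB_{i-1}\cdots B_{i+1})E_i=T^{S\setminus\{i\}}_{i+1}E_i=\mu^{S\setminus\{i\}}E_i$ and, symmetrically, $T^S_{i+1}A_i=E_i(B_{i-1}\cdots B_{i+1}A_i)=E_iT^{S\setminus\{i\}}_i=\mu^{S\setminus\{i\}}E_i$. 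I expect the cyclic index bookkeeping in this inductive step to be the main obstacle; it is the uniform substitute for the explicit case-by-case computations carried out for $C_2$ and $C_3$.

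With the claim in hand, take $S=V$: every $B_i=E_i$, so $T^V=\mu^V I$. A nonzero vector $x\in H_{i_0}$ lies in every $H_j$ and is fixed by every $E_j$ (an inclusion fixes it, and a projection fixes a vector already in its range), so $T^V_{i_0}x=x$, forcing $\mu^V=1$ and $T^V_i=I_{H_i}$ for all $i$. In particular each $E_i$ is injective (it is the first factor of $T^V_i=I_{H_i}$), hence $H_i\subset H_{i+1}$ for every $i$ mod $n$; running around the cycle gives $H_1=\cdots=H_n=:M$ and all $E_i=I_M$. Next, for each $j$ the endomorphism $T^{V\setminus\{j\}}$ has every $B_i=I_M$ except $B_j=A_j$, so its $j$-th component equals $A_j$; transitivity makes it scalar, so every $A_j$ is a scalar operator on $M$. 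Then $(B,\dots,B)\in End(H,f)$ for every $B\in B(M)$, and transitivity forces $B(M)=\mathbb{C}I_M$, i.e.\ $\dim M=1$. Tracing back through the reduction, the original $(H,f)$ has $H_i=\mathbb{C}$ or $0$ for all $i$, and Lemma~\ref{lemma:component} completes the proof.
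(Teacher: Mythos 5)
Your proposal follows essentially the same route as the paper's own proof: the ``if'' direction via Lemma~\ref{lemma:component}, reduction of zero vertices via Lemma~\ref{lemma:End=}, nesting of the $H_i$ via Lemma~\ref{lemma:spacedecomposition}, the family $T^{S}$ of candidate endomorphisms shown to lie in $End(H,f)$ by induction on $\vert S\vert$, and the specializations $S=V$ and $S=V\setminus\{j\}$ to force $H_1=\cdots=H_n$ with each $A_j$ scalar and $\dim M=1$. The only deviation is a local (and correct) streamlining: you deduce $H_i\subset H_{i+1}$ from the injectivity of $E_i$ as the rightmost factor of $T^{V}_i=I_{H_i}$, in place of the paper's case analysis on the components $K_{m(i)}$, which does not change the overall argument.
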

\begin{proof}
Assume that 
$H_{i}=\mathbb{C}$ or 0 and there  exists  only one $(H,f)$-connected 
component in $\{i \in V ; H_i \not= 0\}$.
Then $(H,f)$ is transitive by Lemma \ref{lemma:component}. 

Conversely assume that $(H,f)$ is transitive.
At first we consider the case that $H_{i}\ne 0$ for any i.
By lemma \ref{lemma:spacedecomposition}, 
we may  and do assume that the family $(H_{i})$
of Hilbert spaces are totally ordered under the inclusion order . 
We also assume that $\dim H_{1}$
is the smallest dimension among $\{\dim H_{i};i=1,\cdots,n\}$.

We define
$T=
(T_{1},
T_{2},\cdots,T_{n})$ by
$$T_{1}=A_{n} \cdots A_{3}A_{2}A_{1},
T_{2}= A_{1}A_{n}\cdots A_{4}A_{3}A_{2},
\cdots,
T_{n}= A_{n-1}\cdots A_{3}A_{2}A_{1}A_{n}.$$
Then $T=(T_{1},T_{2},\cdots, T_{n})$
is clearly in $End(H,f)$.


We denote by $E_{i}$ 
the following operator $E_{i}: H_i \rightarrow H_{i+1}$:   
$$
E_{i}= 
\begin{cases}\text{ the inclusion map from } H_{i} \text{ into } H_{i+1}
&  \ \ (\text{if }  H_{i}\subset  H_{i+1} ), \\
\text{ the projection map from} H_{i} \text{ onto } H_{i+1} 
&  \ \ (\text{ if }H_{i+1}\subset H_{i}).
\end{cases}
$$

For $S\subseteq \{1,2,\cdots ,n\}$,
we define 
$B_{i}\in B(H_{i},H_{i+1})$, which depends on $S$,  by

$$
B_{i}= 
\begin{cases} A_{i},&  \ \ \text{ if }i\notin S  \\
 E_{i}, &  \ \  \text{ if }i\in S
\end{cases}
$$

We also define $T_{i}^{S} \in B(H_i)$ and $T^{S} 
\in B(H_1 \oplus ... \oplus H_n)$ by  
$$T_{i}^{S}=B_{i-1}B_{i-2}\cdots B_{2}B_{1}B_{n}B_{n-1}\cdots B_{i+1}B_{
i}
\ \ \ \text{ for } 1\leq i\leq n ,$$
and $T^{S}=(T_{1}^{S},T_{2}^{S},\cdots, T_{n}^{S})$.
That is, $T^{S}=(T_{1}^{S},T_{2}^{S},\cdots, T_{n}^{S})$
is obtained by replacing each word $A_{i}$
in $T=(T_{1},T_{2},\cdots, T_{n})$
with
$E_{i}$ for all $i\in S$.

For example, $T^{\{1\}}=(T^{\{1\}}_{1},
T^{\{1\}}_{2},$$\cdots, T^{\{1\}}_{n})$ 
is given by 
$$T^{\{1\}}_{1}=A_{n}A_{n-1}\cdots A_{2}E_{1},T^{\{1\}}_{2}=
E_{1}A_{n}A_{n-1}\cdots A_{3}A_{2},$$
$$T^{\{1\}}_{3}=
A_{2}E_{1}A_{n}A_{n-1}\cdots A_{4}A_{3},
\cdots ,T^{\{1\}}_{n}=
A_{n-1}A_{n-2}\cdots A_{2}E_{1}A_{n}.$$
We regard $T$ as $T^{\emptyset}$.

In the following we shall show that 
$T^{S}=(T_{1}^{S},T_{2}^{S},\cdots, T_{n}^{S})$
is in $End(H,f)$ for any $S \subseteq \{1,2,\cdots ,n\}$.
We shall prove it by the induction  on the number $k = \vert S\vert$.
First consider the case $k = \vert S\vert = 0$, that is, 
$S = \emptyset$. Then $T^{\emptyset} = T=(T_{1},T_{2},\cdots, T_{n})$
is clearly in $End(H,f)$.

Next, we assume that $T^S$ is in $End(H,f)$ for $\vert S\vert=k$.
Since $(H,f)$ is transitive, there exists a constant 
${\mu}^S \in \mathbb{C}$ such that $T_i^{S} = {\mu}^S I_{H_i}$ 
for any $i = 1,\dots,n$. 
Take $S$ such that $\vert S\vert=k+1$.
We shall show that $T^S$ is in $End(H,f)$. 
It is enough to show that, for any $i = 1,\dots,n$,  
$$
A_iT_i^{S} = T_{i+1}^SA_i
$$
First we consider the case that $i = 1$. 
We need to show  the validity of the relation
$$
A_1T_1^{S} = T_2^SA_1, \ \ \text{that is,} \ 
A_{1}B_{n}\cdots B_{2}B_{1}=B_{1}B_{n}\cdots B_{2}A_{1}.
$$
Assume that  $1$ is in $S$. 
Then $B_{1}=E_{1}$ and $T_i^{S \setminus \{1\}}$ is in $End(H,f)$ 
by the assumption of the induction.  
Since 
$A_{1}B_{n}B_{n-1}\cdots B_{2}$ and 
$B_{n}B_{n-1}\cdots B_{2}A_{1}$ have $k$ changed letters, 
we have
$$T_2^{S \setminus \{1\}} = A_{1}B_{n}B_{n-1}\cdots B_{2}
=\mu^{S\setminus\{1\}}I_{H_{2}}$$
and
$$T_1^{S \setminus \{1\}}= B_{n}B_{n-1}\cdots B_{2}A_{1}
=\mu^{S\setminus\{1\}}I_{H_{1}}.$$
Therefore we have 
$$A_1T_1^{S} = A_{1}B_{n}\cdots B_{1}=\mu^{S\setminus\{1\}}I_{H_{2}}B_{1}
=\mu^{S\setminus\{1\}}E_{1}$$
and
$$T_2^SA_1 = B_{1}B_{n}B_{n-1}\cdots B_{2}A_{1}
=B_{1}\mu^{S\setminus\{1\}}I_{H_{1}}=\mu^{S\setminus\{1\}}E_{1}.$$
Thus
$A_1T_1^{S} = T_2^SA_1$.

Assume that  $1$ is not in $S$. 
Then $B_{1}=A_{1}$.  Hence 
$$
A_1T_1^{S}
= A_{1}B_{n}\cdots B_{1}=A_{1}B_{n}\cdots B_{2}A_{1}$$
and
$$
T_2^SA_1
= B_{1}B_{n}\cdots B_{2}A_{1}=A_{1}B_{n}\cdots B_{2}A_{1}.$$
Thus $A_1T_1^{S} = T_2^SA_1$. 

For other cases that $i = 2,3,\dots n$, we also have that 
$$
A_iT_i^{S} = T_{i+1}^SA_i
$$ 
Hence, by induction, 
we have that 
$T^S$ is in $End(H,f)$ for any $S\subset \{1,2,\cdots n\}$.

In particular, put $S = \{1,2,\cdots, n\}$. 
Since 
$$
T^{\{1,2,\cdots, n\}}=(T_{1}^{\{1,2,\cdots, n\}}
,T_{2}^{\{1,2,\cdots, n\}},
\cdots
,T_{n}^{\{1,2,\cdots, n\}})
$$
is in $End(H,f)$ and $(H,f)$ is transitive, there exits a 
constant $\mu^{\{1,2,\cdots, n\}} \in \mathbb{C}$ such that 
$$
T_{i}^{\{1,2,\cdots, n\}}
= E_{i-1}\cdots E_{1}E_{n}E_{n-1}\cdots E_{i+1}E_{i}
= \mu^{\{1,2,\cdots, n\}}I_{H_i}. 
$$

Take $x(\ne 0)\in H_{1}$. Since
$H_{1}\subset H_{j}$ for any $1\leq j\leq n$ and 
$E_{n}\cdots E_{1}=\mu^{\{1,2,\cdots, n\}}I_{H_{1}}$, we have that 
$x=\mu^{\{1,2,\cdots, n\}}x$.
Hence $\mu^{\{1,2,\cdots, n\}}=1.$

We shall show that
$$H_{1}=H_{2}=\cdots=H_{n}.$$
On the contrary we assume that $H_k \not= H_{\ell}$ for 
some $k \not= \ell$.

Using Lemma \ref{lemma:spacedecomposition},
we can represent $H_{i}$ as 
$$
H_{i}=K_{1}\oplus K_{2}\oplus \cdots \oplus K_{m(i)}
$$
and $m(1)=1$. 
Then there exists the smallest $i$ such that $m(i) > 1.$ 
We compare $m(j)$ and $m(i)$.
If there exists $m(j)$ such that 
$m(j)<m(i)(i\leq j\leq n)$.
Take $x(\ne 0)\in K_{m(i)}\subset H_{i}$.
Then
$$E_{j-1}E_{j-2}\cdots E_{i+1}E_{i}x=0.$$
This contradicts that
$$E_{i-1}\cdots E_{1}E_{n}E_{n-1}\cdots E_{i+1}E_{i}
= I_{H_i}.
$$ 

If there exists no  $m(j)$ such that 
$m(j)<m(i)(i\leq j\leq n)$.
Take $x(\ne 0)\in K_{m(i)}\subset H_{i}$.
Then
$E_{n-1}E_{n-2}\cdots E_{i+1}E_{i}x=x,$ and $E_{n}x=0.$ 
This also contradicts that
$$E_{i-1}\cdots E_{1}E_{n}E_{n-1}\cdots E_{i+1}E_{i}
= I_{H_i}.
$$ 
Therefore we have 
that
$$H_{1}=H_{2}=\cdots=H_{n}=:M.$$
Moreover we also have that 
$$E_{1}=E_{2}=\cdots=E_{n}=I_M.$$
In particular, 
$T_{i}^{\{1,2,\cdots, n\}}=I_M$ for any $i$ and 
$$
A_i = E_{i-1}\cdots E_{1}E_{n}E_{n-1}\cdots E_{i+1}A_i 
= T_{i}^{\{1,2,\cdots, n\}\setminus k} 
= \mu^{\{1,2,\cdots, n\}\setminus k}I_{H_k}.
$$

We shall show that  $\dim M= 1$. On the contrary, 
assume that $\dim M \geq 2$. Then there exists a non-scalar 
operator $B \in B(M)$. Since each $A_k$ is a scalar operator 
for any $k$, $(B, \dots, B)$ is in $End(H,f)$. This contradicts 
to that $(H,f)$ is transitive. Therefore $\dim M= 1$.  
Hence we may assume that 
$H_{i}=\mathbb{C}$ for any $i$. 
Since $(H,f)$ is transitive,
there  exists only one $(H,f)$-connected component on 
$V = \{1,2,\cdots, n\}$ by  Lemma \ref{lemma:component}.

Next we consider the case 
that there exists $H_{i}=0$ for some $i$.
If there exists only one vertex $i$ such that
$H_{i}\ne 0$, then 
$\dim H_{i}=1$ because $(H,f)$ is transitive.
Therefore we may assume that there exists 
 more than two vertices $i$ such that $H_{i}\ne 0$.

We consider the reduction of the quiver $C_{n}$
to the set of vertices $i$ with $H_{i}\ne 0$
to get another quiver $C_{m}(2\leq m\leq n)$.

Let $(K,g)$ be the reduced Hilbert representation 
of $C_{m}$ 
from the Hilbert representation $(H,f)$ of $C_{n}$ 
by Lemma \ref{lemma:End=}. 
Then 
$End(H,f)$ is isomorphic to $End(K,g)$.
Since $(H,f)$ is transitive, $(K,g)$ is also transitive.

Since we can adapt the above consideration to $(K,g)$, 
we have that $H_{i}=\mathbb{C}$ for all
$i$ such that $H_{i}\ne 0$.
Therefore in $(H,f)$,
we may and do have that $H_{i}=\mathbb{C}$ or 0 for $1\leq i\leq n$.
Since $(H,f)$ is transitive, 
by Lemma \ref{lemma:component},
there exists only one $(H,f)$-connected component 
$\{i \in V ; H_i \not= 0\}$.

\end{proof}


\begin{thm}
Let $\Gamma $ be a quiver whose underlying undirected graph is
an extended Dynkin diagram $\widetilde{A_{n}},(n\geq 0)$. 
Then there exists an infinite-dimensional transitive 
Hilbert representation of $\Gamma $ if and only if 
$\Gamma $ is not an oriented cyclic quiver.
\label{thm:An-transitive}
\end{thm}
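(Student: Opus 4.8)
The plan is to split the argument into the cases $n=0$ and $n\geq 1$ and, within the latter, to separate the oriented cyclic case from the non-cyclic case; in each case the substantial work has already been carried out above, so the theorem reduces to a short case analysis. For $n=0$ the extended Dynkin diagram $\widetilde{A_{0}}$ underlies only the Jordan quiver $L=C_{1}$, which is an oriented cyclic quiver, and as observed at the beginning of Section 4 it admits no infinite-dimensional transitive Hilbert representation; thus for $n=0$ both sides of the claimed equivalence fail and there is nothing to prove.

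Next, suppose $n\geq 1$ and $\Gamma$ is not an oriented cyclic quiver. Then Theorem \ref{thm:notcyclic} directly yields an infinite-dimensional transitive Hilbert representation of $\Gamma$: since $\Gamma$ is not cyclically oriented one finds vertices $i,j$ and arrows $\alpha:i\to i+1$, $\beta:j+1\to j$ $(\mod n)$, and one transplants a transitive Hilbert representation of the Kronecker quiver given by operators $A,B$ from \cite[Theorem 3.8.]{EW4} onto the pair $(\alpha,\beta)$, assigning the identity operator to every remaining arrow.

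Finally, suppose $n\geq 1$ and $\Gamma$ is an oriented cyclic quiver; then $\Gamma$ is $C_{n+1}$ with cyclic orientation. Assume toward a contradiction that $(H,f)$ is an infinite-dimensional transitive Hilbert representation of $C_{n+1}$. By Lemma \ref{lemma:cyclictransitive}, transitivity forces $H_{i}=\mathbb{C}$ or $0$ for every vertex $i$, so every $H_{i}$ is finite-dimensional, contradicting the infinite-dimensionality of $(H,f)$. Hence $C_{n+1}$ admits no infinite-dimensional transitive Hilbert representation, and combining the three cases gives the stated equivalence.

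The genuine difficulty has already been absorbed into Lemma \ref{lemma:cyclictransitive} and Theorem \ref{thm:notcyclic}: the hard part is the $T^{S}$-endomorphism argument showing that any transitive representation of $C_{n}$ must have one-dimensional or zero vertex spaces, together with the explicit construction in the non-cyclic case. Given those inputs, the only points here requiring care are the degenerate $\widetilde{A_{0}}$ case and the bookkeeping that the oriented cyclic quiver on $\widetilde{A_{n}}$ is precisely $C_{n+1}$ (and $C_{1}=L$ when $n=0$), so that Lemma \ref{lemma:cyclictransitive} applies verbatim.
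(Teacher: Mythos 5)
Your proposal is correct and follows essentially the same route as the paper: the non-cyclic case is delegated to Theorem \ref{thm:notcyclic} and the oriented cyclic case to Lemma \ref{lemma:cyclictransitive}, which forces all vertex spaces to be $\mathbb{C}$ or $0$ and hence rules out infinite dimensionality. Your explicit treatment of the degenerate $\widetilde{A_{0}}$ case and of the indexing $C_{n+1}\leftrightarrow\widetilde{A_{n}}$ is slightly more careful than the paper's, which disposes of $\widetilde{A_{0}}$ in a remark at the start of the section, but the substance is identical.
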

\begin{proof}
Assume that $\Gamma $ is not an oriented cyclic quiver.
Then,
by Theorem \ref{thm:notcyclic}, there exists an infinite-dimensional transitive 
Hilbert representation of $\Gamma $.
Conversely assume that $\Gamma $ is an oriented cyclic quiver.
Then transitive Hilbert representations of $\Gamma $
are finite-dimensional
 by the above lemma \ref{lemma:cyclictransitive}.
Hence there exist no infinite-dimensional transitive 
Hilbert representations of $\Gamma $.

\end{proof}


Gabriel's theorem states that a  finite, connected quiver has 
only finitely many indecomposable 
representations if and only if 
the underlying undirected graph is one of Dynkin diagrams 
$A_n, D_n, E_6, E_7,E_8$. 
In \cite{EW3}, we 
constructed some examples of indecomposable, 
infinite-dimensional
representations of quivers  with the 
underlying  undirected graphs extended Dynkin diagrams 
$\tilde{D_n} \  (n \geq 4), \tilde{E_6},\tilde{E_7}$ and $\tilde{E_8}$. 
We used
the quivers whose
vertices are represented by a family of subspaces and 
whose arrows are  represented by natural inclusion maps.
Replacing the unilateral shift $S$ with
a transitive operator
in the construction of examples of indecomposable, 
infinite-dimensional
representations of quivers  in
\cite{EW3}, we shall give
 some examples of  
infinite-dimensional transitive
representations of quivers  with the 
underlying  undirected graphs extended Dynkin diagrams 
$\tilde{D_n} \  (n \geq 4), \tilde{E_6},\tilde{E_7}$ and $\tilde{E_8}$.
Our construction of examples is considered as a modification
of an unbounded operator used by 
Harrison,Radjavi and Rosenthal \cite{HRR}
to provide a transitive lattice.

\begin{lemma}
Let $\Gamma=(V,E,s,r)$ be the following quiver with the 
underlying  undirected graph an extended Dynkin diagram 
$\tilde{D_n}$ for $n \geq 4$ \rm{:}

\begin{picture}(150,60)(-75,5)
\put(0,25){\thicklines\circle{2}}
\put(0,20){${}_{{}_{1}}$}

\put(5,25){\vector(1,0){20}}

\put(10,20){${}_{\alpha_{1}}$}

\put(30,25){\thicklines\circle{2}}

\put(30,20){${}_{{}_{5}}$}

\put(30,40){\vector(0,-1){10}}

\put(30,45){\thicklines\circle{2}}

\put(30,50){${}_{{}_{2}}$}

\put(35,38){${}_{\alpha_{2}}$}

\put(35,25){\vector(1,0){10}}

\put(50,25){\thicklines\circle{2}}

\put(50,20){${}_{{}_{6}}$}

\put(55,25){\vector(1,0){10}}

\put(70,25){$\cdots$}

\put(85,25){\vector(1,0){10}}

\put(100,25){\thicklines\circle{2}}

\put(100,20){${}_{{}_{n}}$}

\put(105,25){\vector(1,0){10}}

\put(120,25){\thicklines\circle{2}}

\put(115,20){${}_{{}_{n+1}}$}

\put(145,25){\vector(-1,0){20}}

\put(134,20){${}_{\alpha_{3}}$}

\put(150,25){\thicklines\circle{2}}

\put(150,20){${}_{{}_{3}}$}

\put(120,40){\vector(0,-1){10}}

\put(120,45){\thicklines\circle{2}}

\put(120,50){${}_{{}_{4}}$}

\put(125,38){${}_{\alpha_{4}}$}

\end{picture}

\noindent
Then there exists an infinite-dimensional, 
transitive Hilbert representation $(H,f)$ of $\Gamma$.  
\label{lemma:Dn}
\end{lemma}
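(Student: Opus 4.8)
The plan is to spread the transitive four-subspace system of Proposition~\ref{Prop,surjective} over the two branch vertices $5$ and $n+1$ of $\Gamma$ while letting the spine $5\to 6\to\cdots\to n\to n+1$ carry identity maps; this is the $\widetilde{D_{n}}$-analogue of the construction used for $\widetilde{A_{n}}$ in Theorem~\ref{thm:notcyclic}, with the unilateral shift of \cite{EW3} replaced by a transitive Kronecker pair. First I would fix an infinite-dimensional Hilbert space $K$ and operators $A,B\in B(K)$ such that the Hilbert representation of the Kronecker quiver given by $(A,B)$ is transitive, $\ker A\cap\ker B=0$, and $\Im A^{*}+\Im B^{*}$ is closed in $K$; such a pair is supplied by \cite[Theorem 3.8.]{EW4} (or by Proposition~\ref{thm:Kron-example}, where moreover $\ker A=0$ and $\Im A^{*}+\Im B^{*}=K$). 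By the Remark following Proposition~\ref{Prop,surjective} the subspace $E_{3}:=\{(Ax,Bx):x\in K\}$ of $K\oplus K$ is then closed. Now define $(H,f)$ by $H_{1}=H_{2}=H_{3}=H_{4}=K$, $H_{5}=H_{6}=\cdots=H_{n+1}=K\oplus K$, by letting every spine arrow $i\to i+1$ $(5\le i\le n)$ act as the identity of $K\oplus K$, and by setting
\[
f_{\alpha_{1}}(x)=(x,0),\quad f_{\alpha_{2}}(x)=(0,x),\quad f_{\alpha_{3}}(x)=(Ax,Bx),\quad f_{\alpha_{4}}(x)=(x,x)\qquad(x\in K).
\]
This is an infinite-dimensional Hilbert representation of $\Gamma$ with the prescribed orientation.

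Next I would determine $End(H,f)$. Let $T=(T_{v})_{v\in V}\in End(H,f)$. The identity spine maps force $T_{5}=T_{6}=\cdots=T_{n+1}=:C$. The intertwining relations for $\alpha_{1},\alpha_{2}$ show that $C$ maps $E_{1}=K\oplus 0$ into itself and $E_{2}=0\oplus K$ into itself; those for $\alpha_{3},\alpha_{4}$ show that $C$ maps $E_{3}$ into itself and $E_{4}=\{(x,x):x\in K\}$ into itself. Hence $C\in End(\mathcal{S})$ for the four-subspace system $\mathcal{S}=(K\oplus K;E_{1},E_{2},E_{3},E_{4})$, which is exactly the system of Proposition~\ref{Prop,surjective}. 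By that proposition the algebra homomorphism $\Phi$ from the endomorphism algebra of the Kronecker representation onto $End(\mathcal{S})$ is surjective; since the Kronecker representation is transitive, its endomorphism algebra is $\mathbb{C}I$ and $\Phi(\lambda I)=\lambda I_{K\oplus K}$, so $End(\mathcal{S})=\mathbb{C}I_{K\oplus K}$ and $C=\lambda I$ for some $\lambda\in\mathbb{C}$. Finally each leaf map $f_{\alpha_{k}}$ $(k=1,2,3,4)$ is injective — the first two obviously, and $f_{\alpha_{3}}$ because $\ker A\cap\ker B=0$ — so from $Cf_{\alpha_{k}}=f_{\alpha_{k}}T_{k}$ and $C=\lambda I$ we get $f_{\alpha_{k}}(\lambda x-T_{k}x)=0$ for all $x$, hence $T_{k}=\lambda I$. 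Therefore $T=\lambda I$ and $(H,f)$ is transitive.

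I expect no deep obstacle; the construction is essentially forced once the machinery of Proposition~\ref{Prop,surjective} is in place. The two points needing care are: (i) choosing the Kronecker pair $(A,B)$ so that transitivity, injectivity of $f_{\alpha_{3}}$ (i.e. $\ker A\cap\ker B=0$), and closedness of $E_{3}$ (i.e. closedness of $\Im A^{*}+\Im B^{*}$) hold simultaneously, which is precisely what \cite[Theorem 3.8.]{EW4} provides; and (ii) checking that the arbitrarily long spine does not enlarge the endomorphism algebra, which it does not, since the identity spine maps collapse the spine components of any endomorphism to a single $C\in End(\mathcal{S})$ and the injective leaf maps then propagate the scalar $\lambda$ out to the four leaves.
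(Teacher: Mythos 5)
Your proof is correct and is essentially the paper's own argument: the paper also places the four-subspace system $\bigl(K\oplus K;\,K\oplus 0,\;0\oplus K,\;\{(Ax,Bx):x\in K\},\;\{(x,x):x\in K\}\bigr)$ of a transitive (unbounded) operator $S=BA^{-1}$ at the branch vertices, runs identities along the spine, and concludes that the common spine component of an endomorphism preserves all four subspaces and is therefore scalar. The only cosmetic difference is that the paper takes the leaf spaces to be the subspaces themselves with inclusion maps (so the leaf components of an endomorphism are automatically restrictions of the spine component), whereas you keep $H_k=K$ with the parametrizing maps $f_{\alpha_k}$ and recover the scalar at the leaves from their injectivity; your appeal to Proposition~\ref{Prop,surjective} in place of the paper's direct use of Lemma~\ref{transitive operator} is an equivalent route that the paper itself establishes in Proposition~\ref{unbounded:transitive}.
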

\begin{proof}
Let $K = \ell^2(\mathbb N)$ and $S$ a transitive operator on $K$ with the domain $D(S)$. 
We define a Hilbert representation
$(H,f):=((H_v)_{v\in V}$
$,(f_{\alpha})_{\alpha \in E})$ 
of $\Gamma$ as follows: \\
Define 
 $H_1 = K \oplus 0$, 
$H_2 = 0 \oplus K$, 
$H_3 = \{(x,Sx)\in K \oplus K ; x \in D(S)\}$, \\
$H_4 = \{(x,x)\in K \oplus K ; x \in K\}$,  
$H_5 = H_6 = \dots =H_{n+1} = K \oplus K$. \\
Let $f_{\alpha_k}  
: H_{s(\alpha_k)}  \rightarrow  H_{r(\alpha_k)} $ be 
the inclusion map for any  $\alpha_k \in E$ for 
$k = 1,2,3,4$,  and $f_{\beta} = id$ for other arrows 
$\beta \in E$. 

Take $T = (T_v)_{v \in V} \in End(H,f)$. 
Since $T \in End (H,f)$ and any arrow is represented 
by the inclusion map, we have $T_i=T_j(i=5,\cdots,n+1),
T_5x = T_vx$ for any 
$v \in \{1,2,4\}$,
any  $x \in H_v$.
In particular, 
$T_5 H_v \subset H_v$($v \in \{1,2,4\})$,.  
Hence 
$T_{5}$ is written as
$T_{5}=A\oplus A$
as in \cite[Lemma 6.1,Example 3]{EW3}.
Moreover 
$H_3$ is also invariant under $T_5$. 
Since $S$ is transitive ,
 we have that
$A$ is a scalar by Lemma \ref{transitive operator}. 
Thus $T$ is a scalar, that is, 
$End(H,f)=\mathbb{C}.$
Therefore $(H,f)$ is transitive.

\end{proof}
\begin{lemma}
Let $\Gamma=(V,E,s,r)$ be the following quiver with the 
underlying  undirected graph an extended Dynkin diagram 
$\tilde{E_6}$ \rm{:}


\begin{picture}(130,65)(-80,0)
\put(80,15){\thicklines\circle{2}}
\put(80,10){${}_{{}_{0}}$}

\put(80,30){\vector(0,-1){10}}
\put(80,35){\thicklines\circle{2}}
\put(85,35){${}_{1^{{\prime}{\prime}} }$}

\put(80,50){\vector(0,-1){10}}
\put(80,55){\thicklines\circle{2}}
\put(85,55){${}_{ 2^{{\prime}{\prime}}  }$}

\put(95,15){\vector(-1,0){10}}
\put(100,15){\thicklines\circle{2}}
\put(100,10){${}_{1^{{\prime}} }$}

\put(115,15){\vector(-1,0){10}}
\put(120,15){\thicklines\circle{2}}
\put(120,10){${}_{ 2^{{\prime}}}$}

\put(65,15){\vector(1,0){10}}
\put(60,15){\thicklines\circle{2}}
\put(60,10){${}_{1}$}

\put(45,15){\vector(1,0){10}}
\put(40,15){\thicklines\circle{2}}
\put(40,10){${}_{ 2}  $}

\end{picture}

\noindent
Then there exists an infinite-dimensional, 
transitive Hilbert representation $(H,f)$ of $\Gamma$.

\label{lemma:E6}
\end{lemma}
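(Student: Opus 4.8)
The plan is to imitate the construction and the argument used for $\tilde{D_n}$ in Lemma \ref{lemma:Dn}, now taking a Hilbert space of the form $K\oplus K\oplus K$ as the ambient space sitting at the branch vertex $0$ of $\tilde{E_6}$. Concretely, I would set $K=\ell^2(\mathbb{N})$ and fix a transitive operator $S$ on $K$ with domain $D(S)$; existence of such an $S$ is guaranteed by Proposition \ref{thm:Kron-example} or Proposition \ref{thm:Kro-example}. Following the construction of \cite{EW3} for indecomposable representations of $\tilde{E_6}$, with the unilateral shift there replaced by $S$, I would put $H_0=K\oplus K\oplus K$ at the central vertex, attach to the three intermediate vertices $1^{\prime\prime},1^{\prime},1$ the three coordinate ``planes'' $K\oplus K\oplus 0$, $K\oplus 0\oplus K$, $0\oplus K\oplus K$, and attach to the three end vertices $2^{\prime\prime},2^{\prime},2$ three ``lines'': the two untwisted lines $\{(x,x,0);x\in K\}$ and $\{(x,0,x);x\in K\}$ sitting inside the first two planes, and the twisted line $H_{2^{\prime\prime}}=\{(0,y,Sy);y\in D(S)\}$, which is a closed subspace since $S$ is closed, sitting inside the third plane. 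Every arrow $f_\beta$ is the corresponding inclusion map.

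Given $T=(T_v)_{v\in V}\in End(H,f)$, the fact that all arrows are inclusions shows that each $T_v$ is the restriction of $T_0$ to $H_v$, so the whole of $T$ is governed by the single bounded operator $T_0$ on $K\oplus K\oplus K$, which must leave all six subspaces above invariant. Invariance of the three coordinate planes forces invariance of their pairwise intersections, i.e.\ of the three coordinate copies of $K$ in $K\oplus K\oplus K$; hence, exactly as in \cite[Lemma 6.1]{EW3}, $T_0$ has the block-diagonal form $A_1\oplus A_2\oplus A_3$ with each $A_i$ a bounded operator on $K$. Invariance of $\{(x,x,0);x\in K\}$ gives $A_1=A_2$ and invariance of $\{(x,0,x);x\in K\}$ gives $A_1=A_3$, so $T_0=A\oplus A\oplus A$ with $A:=A_1$. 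Finally, invariance of $\{(0,y,Sy);y\in D(S)\}$ under $A\oplus A$ says precisely that $(A\oplus A)(G(S))\subset G(S)$, so $A$ lies in the bounded commutant $\{S\}^{\prime}$; since $S$ is transitive, Lemma \ref{transitive operator} makes $A$ a scalar. Therefore $T_0$, and with it $T$, is a scalar, so $End(H,f)=\mathbb{C}I$ and $(H,f)$ is transitive, which also gives the claimed infinite-dimensionality.

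The point needing the most care is the reduction to $T_0=A\oplus A\oplus A$: one has to be sure the three chosen ``planes'' are in sufficiently general position --- the key being that their pairwise intersections are three distinct copies of $K$ that together span the ambient space --- and that the two untwisted ``lines'' are transverse enough inside their planes to force the three diagonal blocks to coincide. A secondary subtlety is that $S$ is unbounded: one must check that $H_{2^{\prime\prime}}=\{(0,y,Sy);y\in D(S)\}$ is a genuine Hilbert space (it is, being the graph of the closed operator $S$) and that the invariance condition unwinds exactly to $A\,D(S)\subset D(S)$ together with $SA=AS$ on $D(S)$, which is the meaning of $A\in\{S\}^{\prime}$ in Lemma \ref{transitive operator}. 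The remaining checks --- that each tuple appearing above actually lies in $End(H,f)$ --- are routine diagram chases of the same kind as in the proof of Lemma \ref{lemma:Dn}.
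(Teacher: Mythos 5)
Your construction and argument coincide with the paper's proof: the same six subspaces of $K\oplus K\oplus K$ (three coordinate planes, two untwisted diagonals, one graph of a transitive operator $S$), all arrows inclusions, reduction of $T_0$ to $A\oplus A\oplus A$ via invariance of the planes and untwisted lines, and then Lemma \ref{transitive operator} applied to the invariance of $G(S)$; you merely spell out the block-diagonalization step that the paper delegates to \cite[Example 4]{EW3}. The only blemish is a labeling slip — the twisted line sits inside the plane you attached to vertex $1$, so it should be $H_{2}$ rather than $H_{2''}$ — which is harmless since the three wings of this orientation of $\tilde{E_6}$ are interchangeable.
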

\begin{proof}
Let $(H,f)= ((H_v)_{v\in V},(f_{\alpha})_{\alpha \in E})$
be the following Hilbert representation  
of $\Gamma$: Let $K = \ell^2(\mathbb N)$ and 
$S$ a transitive operator on $K$ with the domain $D(S)$. 
Define $H_{0}=K \oplus K \oplus K,$
$H_{1}=0\oplus K \oplus K,$ \\
$H_{2}=0\oplus\{(y,Sy)\in K^2 ; \ y\in D(S)\},$
$H_{1^{^{\prime}}}=K \oplus K \oplus0,$ \\
$H_{2^{^{\prime}}}=\{(x,x)\in K^2 ; \ x\in K\}\oplus0,$
$H_{1^{^{\prime\prime}}}=K\oplus0\oplus K,$ \\
$H_{2^{^{\prime\prime}}}=\{(x,0,x) \in K^3 ; \ x\in K\}.$
For any arrow $\alpha \in E$, let  
$f_{\alpha} : H_{s(\alpha)} \rightarrow H_{r(\alpha)}$ be 
the canonical inclusion map. 
Take $T = (T_v)_{v \in V} \in End(H,f)$. 
Since any arrow is represented 
by the inclusion map, we have $T_0 x = T_vx$ for any 
$v \in \{1,1',2',1^{\prime\prime},2^{\prime\prime} \}$ and 
any  $x \in H_v$. In particular, 
$T_0 H_v \subset H_v$.  
Hence 
$T_{0}$ is written as
$T_{0}=A\oplus A\oplus A.$
Moreover 
$H_2
= \{(0,x,Sx) \in K^3 ; \ x \in D(S) \}$
is also invariant under $T_0$. 
Hence for any $x \in D(S)$, there exists $y \in D(S)$ such that 
$(0,Ax,ASx) = (0,y,Sy)$ as in \cite[Example 4]{EW3}. 
Since $S$ is transitive ,
 we have that
$A$ is a scalar by Lemma \ref{transitive operator}.  Thus $T$ is a scalar, that is, 
$End(H,f)=\mathbb{C}.$
Therefore $(H,f)$ is transitive.

\end{proof}
\begin{lemma}
Let $\Gamma=(V,E,s,r)$ be the following quiver with the 
underlying  undirected graph an extended Dynkin diagram 
$\tilde{E_7}$ \rm{:}

\begin{picture}(140,45)(-80,0)

\put(10,15){\thicklines\circle{2}}

\put(10,10){${}_{{}_{3}}$}

\put(15,15){\vector(1,0){10}}

\put(30,15){\thicklines\circle{2}}

\put(30,10){${}_{{}_{2}}$}

\put(35,15){\vector(1,0){10}}

\put(50,15){\thicklines\circle{2}}

\put(50,10){${}_{{}_{1}}$}

\put(55,15){\vector(1,0){10}}

\put(70,15){\thicklines\circle{2}}

\put(70,10){${}_{{}_{0}}$}

\put(85,15){\vector(-1,0){10}}

\put(90,15){\thicklines\circle{2}}

\put(90,10){${}_{{}_{1^{\prime}}   }$}

\put(105,15){\vector(-1,0){10}}

\put(110,15){\thicklines\circle{2}}

\put(110,10){${}_{{}_{2^{\prime} }  }$}

\put(125,15){\vector(-1,0){10}}

\put(130,15){\thicklines\circle{2}}

\put(130,10){${}_{{}_{3^{\prime} }  }$}

\put(70,30){\vector(0,-1){10}}

\put(70,35){\thicklines\circle{2}}

\put(75,35){${}_{{}_{ 1^{{\prime}{\prime}} }  }$}

\end{picture}

\noindent
Then there exists an infinite-dimensional, 
transitive Hilbert representation $(H,f)$ of $\Gamma$.  
\label{lemma:E7}
\end{lemma}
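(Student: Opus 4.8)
The plan is to follow the pattern of Lemmas~\ref{lemma:Dn} and~\ref{lemma:E6}: spread the four–subspace system $\mathcal S_S$ of a transitive closed operator $S$ over the two long arms of $\widetilde{E_7}$ so that every endomorphism is forced to be a scalar. First I would fix $K=\ell^2(\mathbb N)$ and, by Proposition~\ref{thm:Kron-example} (or Proposition~\ref{thm:Kro-example}), a transitive closed operator $S$ on $K$ with domain $D(S)$; since $S$ is closed, $G(S)=\{(y,Sy):y\in D(S)\}$ is a closed subspace of $K\oplus K$. Using the two length-three arms $3\to 2\to 1\to 0$ and $3'\to 2'\to 1'\to 0$ and the length-one arm $1''\to 0$, I would put $H_0=K\oplus K\oplus K\oplus K$ and
$$H_1=K\oplus K\oplus K\oplus 0,\quad H_2=K\oplus K\oplus 0\oplus 0,\quad H_3=\{(x,x,0,0):x\in K\},$$
$$H_{1'}=0\oplus K\oplus K\oplus K,\quad H_{2'}=0\oplus 0\oplus K\oplus K,\quad H_{3'}=\{(0,0,y,Sy):y\in D(S)\},$$
$$H_{1''}=\{(x,y,x,y):x,y\in K\},$$
with every $f_\alpha$ the canonical inclusion map. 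Each of these is a closed subspace of the relevant ambient space, the number of copies of $K$ (namely $1,2,3,4,3,2,1,2$) matches the coefficients of the null root of $\widetilde{E_7}$, and the inclusions are bounded, so $(H,f)$ is a well-defined, infinite-dimensional Hilbert representation of $\Gamma$.

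Then I would compute $End(H,f)$. Take $T=(T_v)_{v\in V}\in End(H,f)$. Since $\Gamma$ is connected and every arrow is an inclusion, each $T_v$ is the restriction of $T_0$ to $H_v$ and each $H_v$ is invariant under $T_0$; write $T_0=(T_{ij})_{i,j=1}^{4}$ as an operator matrix over $B(K)$. Invariance of the two coordinate $3$-planes $H_1,H_{1'}$ together with $H_2,H_{2'}$ kills all off-diagonal $T_{ij}$ except possibly $T_{21}$ and $T_{34}$; invariance of the diagonal plane $H_{1''}$ then kills $T_{21},T_{34}$ and forces $T_{11}=T_{33}$ and $T_{22}=T_{44}$; and invariance of the diagonal line $H_3$ forces $T_{11}=T_{22}$. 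Hence $T_0=A\oplus A\oplus A\oplus A$ for a single $A\in B(K)$, which is the analogue of the block-diagonalization in \cite[Lemma~6.1]{EW3}. Finally, invariance of $H_{3'}$, which carries $G(S)$ in the last two coordinates, reads $(A\oplus A)G(S)\subset G(S)$, i.e.\ $A\in\{S\}^{\prime}$; since $S$ is transitive, $A$ is a scalar by Lemma~\ref{transitive operator}. Therefore $T_0$, hence every $T_v$, is a scalar, $End(H,f)=\mathbb C I$, and $(H,f)$ is transitive and infinite-dimensional.

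The only point that needs genuine care is the choice of the seven arm subspaces above: they must descend along the arms compatibly with the inward orientations, respect the labels of the null root (so $H_0=K^{\oplus 4}$, etc.), and have common stabilizer in $B(K^{\oplus 4})$ exactly equal to $\{A\oplus A\oplus A\oplus A : A\in B(K)\}$, while still encoding $G(S)$ at one tip --- this is precisely the combinatorial content of the $\widetilde{E_7}$ example in \cite{EW3}, with the unilateral shift replaced by a transitive operator. Verifying the invariance claims above is a routine $4\times 4$ block computation; the only analytic input is that $S$ is closed (so that $G(S)$ is a Hilbert subspace and the relevant inclusion is bounded) and that $\{S\}^{\prime}=\mathbb C I$, which is exactly the transitivity of $S$.
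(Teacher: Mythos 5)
Your construction is correct and follows essentially the same strategy as the paper's proof: attach nested coordinate subspaces of $K^{\oplus 4}$ along the arms so that invariance under every endomorphism forces $T_0=A\oplus A\oplus A\oplus A$, and encode the graph $G(S)$ of a transitive closed operator at one vertex so that $A\in\{S\}^{\prime}=\mathbb{C}I$ by Lemma \ref{transitive operator}. The only difference is cosmetic --- the paper puts $G(S)$ at vertex $2'$ (as $0\oplus K\oplus G(S)$, extracting it via $H_1\cap H_{2'}$) and the diagonal $\{(x,x)\}$ inside $H_2$, whereas you put $G(S)$ directly at the tip $3'$ and the diagonal at the tip $3$ --- and your $4\times 4$ block computation checks out.
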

\begin{proof}
Let $K = \ell^2(\mathbb N)$ and $S$ 
a transitive operator on $K$ with the domain $D(S)$.
Define a Hilbert representation
$(H,f) := ((H_v)_{v\in V},(f_{\alpha})_{\alpha \in E})$ 
of $\Gamma$ as follows: \\
Let 
$H_{0}=K\oplus K\oplus K\oplus K,$
$H_{1}=K\oplus0\oplus K\oplus K,$ \\
$H_{2}=K\oplus0\oplus\{(x,x);x\in K\},$
$H_{3}=K\oplus0\oplus0\oplus0,$ \\
$H_{1^{^{\prime}}}=0\oplus K\oplus K\oplus K,$
$H_{2^{^{\prime}}}
=0\oplus K\oplus\{(y,Sy)\in K^2 ; y\in D(S)\},$ \\
$H_{3^{^{\prime}}}=0\oplus K\oplus0\oplus0$ and 
$H_{1^{^{\prime\prime}}}=\{(x,y,x,y)\in K^4 ; \ x,y \in K\}.$
For any arrow $\alpha \in E$, let  
$f_{\alpha} : H_{s(\alpha)} \rightarrow H_{r(\alpha)}$ be 
the canonical inclusion map.   
Take $T = (T_v)_{v \in V} \in End(H,f)$. 
Since any arrow is represented 
by the inclusion map, we have $T_0 x = T_vx$ for any 
$v \in \{1,2,3,1',2',3',1'' \}$ and 
any  $x \in H_v$. In particular, 
$T_0 H_v \subset H_v$.  
Hence 
$T_{0}$ is written as
$T_{0}=A\oplus A\oplus A\oplus A.$
Moreover 
$H_1 \cap H_{2^{^{\prime}}}
= \{(0,0,x,Sx) \in K^4 ; \ x \in D(S) \}$
is also invariant under $T_0$. 
Hence for any $x \in D(S)$, 
there exists $y \in D(S)$ such that 
$(0,0,Ax,ASx) = (0,0,y,Sy)$ as in \cite[Lemma 6.2]{EW3}. 
Since $S$ is transitive ,
 we have that
$A$ is a scalar by Lemma \ref{transitive operator}. Thus $T$ is a scalar, that is, 
$End(H,f)=\mathbb{C}.$
Therefore $(H,f)$ is transitive.
\end{proof}

\begin{lemma}

Let $\Gamma=(V,E,s,r)$ be the following quiver with the 
underlying  undirected graph an extended Dynkin diagram 
$\tilde{E_8}$ \rm{:}

\begin{picture}(150,45)(-70,0)

\put(10,15){\thicklines\circle{2}}

\put(10,10){${}_{{}_{5}}$}

\put(15,15){\vector(1,0){10}}

\put(30,15){\thicklines\circle{2}}

\put(30,10){${}_{{}_{4}}$}

\put(35,15){\vector(1,0){10}}

\put(50,15){\thicklines\circle{2}}

\put(50,10){${}_{{}_{3}}$}

\put(55,15){\vector(1,0){10}}

\put(70,15){\thicklines\circle{2}}

\put(70,10){${}_{{}_{2}}$}

\put(75,15){\vector(1,0){10}}

\put(90,15){\thicklines\circle{2}}

\put(90,10){${}_{{}_{1} }$}

\put(95,15){\vector(1,0){10}}

\put(110,15){\thicklines\circle{2}}

\put(110,10){${}_{{}_{0} }$}

\put(125,15){\vector(-1,0){10}}

\put(130,15){\thicklines\circle{2}}

\put(130,10){${}_{{}_{1^{\prime} }  }$}

\put(145,15){\vector(-1,0){10}}

\put(150,15){\thicklines\circle{2}}

\put(150,10){${}_{{}_{2^{\prime} }  }$}

\put(110,35){\thicklines\circle{2}}
\put(115,35){${}_{{}_{ 1^{{\prime}{\prime}} }  }$}
\put(110,30){\vector(0,-1){10}}

\end{picture}

\noindent
Then there exists an infinite-dimensional, transitive Hilbert representation $(H,f)$ of $\Gamma$.  

\label{lemma:E8}
\end{lemma}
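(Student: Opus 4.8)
The plan is to handle $\tilde{E_8}$ exactly as $\tilde{D_n}$, $\tilde{E_6}$ and $\tilde{E_7}$ were handled in Lemmas \ref{lemma:Dn}, \ref{lemma:E6} and \ref{lemma:E7}: start from the indecomposable, infinite-dimensional Hilbert representation of $\tilde{E_8}$ constructed in \cite{EW3}, in which every vertex carries a subspace of a fixed ambient Hilbert space and every arrow is the natural inclusion, and replace the unilateral shift used there by a transitive operator $S$ on $K=\ell^{2}(\mathbb{N})$ with domain $D(S)$. Concretely, one takes $K=\ell^{2}(\mathbb{N})$, puts $H_{0}=K^{6}$ at the trivalent vertex $0$ (the coefficient of $0$ in the minimal imaginary root of $\tilde{E_8}$ being $6$), assigns to the vertices $1,2,3,4,5$ of the long arm a flag of coordinate subspaces $H_{5}\subset H_{4}\subset H_{3}\subset H_{2}\subset H_{1}\subset H_{0}$ of dimensions $1,2,3,4,5,6$, assigns to $1'$ a suitable $4$-dimensional coordinate subspace and to $1''$ a suitable $3$-dimensional ``diagonal'' subspace, and assigns to the leaf $2'$ the closed subspace built from the graph $\{(x,Sx);x\in D(S)\}$ of $S$ (closed because $S$ is a closed operator), in precisely the pattern in which the bounded model of \cite{EW3} is assembled; every $f_{\alpha}$ is the canonical inclusion, and $(H,f)$ is infinite-dimensional because $K$ is.

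The verification that $(H,f)$ is transitive then follows the previous three lemmas. Let $T=(T_{v})_{v\in V}\in End(H,f)$. Since every arrow is an inclusion map, $T_{0}H_{v}\subset H_{v}$ and $T_{0}|_{H_{v}}=T_{v}$ for every vertex $v$; in particular $T_{0}$ leaves invariant the flag $H_{5}\subset\dots\subset H_{1}\subset H_{0}$, the subspace at $1'$ and the diagonal subspace at $1''$, and the same finite computation over $B(K)$ that established indecomposability of the bounded model in \cite{EW3} shows that $T_{0}=A\oplus A\oplus A\oplus A\oplus A\oplus A$ for a single $A\in B(K)$. Now invariance of the subspace at $2'$ under $T_{0}$ says exactly that $(A\oplus A)(\{(x,Sx);x\in D(S)\})\subset\{(x,Sx);x\in D(S)\}$, that is, $A(D(S))\subset D(S)$ and $ASx=SAx$ for all $x\in D(S)$. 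Since $S$ is transitive, $A$ is a scalar operator by Lemma \ref{transitive operator}; hence $T_{0}$, and therefore each $T_{v}=T_{0}|_{H_{v}}$, is a scalar. Thus $End(H,f)=\mathbb{C}I$ and $(H,f)$ is transitive.

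The step that will require the most care is bookkeeping rather than a new idea: one must exhibit the seven $S$-free subspaces (those at the vertices $1,2,3,4,5,1',1''$) of $K^{6}$ in an explicit normal form, for instance by splitting the six coordinates so that the long arm, the length-two arm and the length-one arm each see a prescribed block of coordinates with the length-one arm identifying one block with another, and then checking that the only bounded operator on $K^{6}$ leaving all seven invariant has the block form $A\oplus\dots\oplus A$. This is a finite computation of exactly the type carried out in the proof of the $\tilde{E_7}$ case, and it is already contained in \cite{EW3}, where the corresponding representation was shown indecomposable; the only genuinely new ingredient here is the concluding appeal to the transitivity of $S$ through Lemma \ref{transitive operator}, which upgrades ``indecomposable'' to ``$End=\mathbb{C}I$''. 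Accordingly I would write out the nine subspaces explicitly, invoke \cite{EW3} for the block-scalar form of $T_{0}$, and conclude with Lemma \ref{transitive operator}.
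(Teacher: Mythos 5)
Your proposal is correct and follows essentially the same route as the paper: take the $\tilde{E_8}$ configuration of subspaces of $K^{6}$ from \cite[Lemma 6.3]{EW3} with all arrows inclusions, replace the unilateral shift by a transitive operator $S$, use invariance of the $S$-free subspaces to force $T_{0}=A\oplus\cdots\oplus A$, and then use invariance of the graph-type subspace together with Lemma \ref{transitive operator} to conclude that $A$ is scalar. The only discrepancy is in the bookkeeping you explicitly defer to \cite{EW3}: in the paper the $S$-dependent subspace sits at vertex $4$ on the long arm (namely $H_{4}=0\oplus0\oplus0\oplus K\oplus\{(y,Sy);y\in D(S)\}$), not at the leaf $2'$, and the long arm is not a pure coordinate flag since $H_{1}=\{(x,x)\}\oplus K^{4}$ contains a diagonal; these diagonals at $1$, $1'$ and $1''$ are what force the six diagonal blocks of $T_{0}$ to coincide.
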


\begin{proof}
Let $K = \ell^2(\mathbb N)$ and $S$ 
a transitive operator on $K$ with the domain $D(S)$. 
We define a Hilbert representation
$(H,f) := ((H_v)_{v\in V},(f_{\alpha})_{\alpha \in E})$ 
of $\Gamma$ as follows: \\
Let 
$H_{0}=K\oplus K\oplus K\oplus K\oplus K\oplus K,$ \\
$H_{1}=\{(x,x)\in K^2 ; \ x\in K\} 
\oplus K\oplus K\oplus K\oplus K,$ \\
$H_{2}=0\oplus0\oplus K\oplus K\oplus K\oplus K,$
$H_{3}=0\oplus0\oplus0\oplus K\oplus K\oplus K,$ \\
$H_{4}=0\oplus0\oplus0\oplus K\oplus\{(y,Sy)\in K^2 ; \ y\in D(S)\},$
$H_{5}=0\oplus0\oplus0\oplus K\oplus0\oplus0,$ \\
$H_{1^{^{\prime}}}=K\oplus K\oplus\{(x,y,x,y)\in K^4 ; \ x,y\in K\},$
$H_{2^{^{\prime}}}=K\oplus K\oplus0\oplus0\oplus0\oplus0,$ \\
$H_{1^{^{\prime\prime}}}=\{(y,z,x,0,y,z)\in K^6 ; \ x,y,z\in K\}.$\\
For any arrow $\alpha \in E$, let  
$f_{\alpha} : H_{s(\alpha)} \rightarrow H_{r(\alpha)}$ be 
the canonical inclusion map. 

Take $T = (T_v)_{v \in V} \in End(H,f)$. 
Since any arrow is represented 
by the inclusion map, we have $T_0 x = T_vx$ for any 
$v \in V $ and 
any  $x \in H_v$. In particular, 
$T_0 H_v \subset H_v$.  
Since $T_{0}$ preserves 
subspaces 
$H_{v},v=1,1',1",2,2',3,5$,
$T_{0}$ is written as  
\[
T_0 = A \oplus A \oplus A \oplus A \oplus A 
      \oplus A \oplus A \oplus A .
\]

Finally $\ H_{4}
=0\oplus0\oplus0\oplus K\oplus\{(y,Sy)\in K^2 ; \ y\in K\}$ 
is invariant under $T_0$. 
Then for any $x\in K $ and $y\in D(S)$,
 there exist $x'\in K$ and $y' \in D(S)$ such that 
\[
T_{0}(0,0,0,x,y,Sy)
=(0,0,0,Ax,Ay,ASy)=(0,0,0,x',y',Sy').
\]
Hence $ASy = Sy' = SAy$ as in \cite[Lemma 6.3]{EW3}. 
Since $S$ is transitive ,
 we have that
$A$ is a scalar by Lemma \ref{transitive operator}.
 
Thus $T = (T_v)_{v \in V}$ is a scalar, that is, 
$End(H,f)=\mathbb{C}.$
Therefore $(H,f)$ is transitive.

\end{proof}


Next,
we shall investigate the endomorphism algebras of 
Hilbert representations.
At first we recall some facts about reflection functors from \cite{EW3}.

Reflection functors are crucially used 
in the proof of the classification of finite-dimensional, 
indecomposable representations of tame quivers
(cf.\cite{As},\cite{BGP},\cite{DR},\cite{DF},\cite{GR},
\cite{GP}). As a matter of fact many 
indecomposable representations of tame quivers can be 
reconstructed by iterating  reflection functors on 
simple indecomposable representations. 
We can not expect such a best position in infinite-dimensional 
Hilbert representations. But reflection functors are still 
valuable to show that 
some property of representations of 
quivers on infinite-dimensional Hilbert spaces 
does not depend on the choice of orientations 
and 
does depend on the fact underlying undirected graphs 
are (extended) Dynkin diagrams or not. 

Let $\Gamma=(V,E,s,r)$ be a finite quiver. 
We say that a vertex $v \in V$ is a {\it sink} if $v \not= s(\alpha)$ 
for any $\alpha \in E$. Put 
$E^v = \{ \alpha \in E ; \ r(\alpha) = v \}$. 
We denote by $\overline{E}$ the set of all 
formally reversed new arrows $\overline{\alpha}$ for 
$\alpha \in E$. 
In this way if $\alpha : x \rightarrow y$ is an arrow, then 
$\overline{\alpha }: x \leftarrow y $. 

\smallskip
\noindent
{\bf Definition.}\cite{EW3}
Let $\Gamma=(V,E,s,r)$ be a finite quiver.  
For a sink $v \in V$, we construct  a new quiver 
$\sigma_v^+(\Gamma) = (\sigma_v^+(V), \sigma_v^+(E), s,r)$ 
as follows: All the arrows of $\Gamma$ having $v$ as range 
are reversed and all the other arrows remain unchanged.  
That is,\[
 \sigma_v^+(V) = V \ \ \ 
\sigma_v^+(E) = (E \setminus E^v) \cup \overline{E^v} , 
\]
where $\overline{E^v} = \{ \overline{\alpha } ;
\ \alpha \in E^v \}$.  

\smallskip
\noindent
{\bf Definition.} \cite{EW3} (reflection functor $\Phi_v^+$.) 
Let $\Gamma=(V,E,s,r)$ be a finite quiver.  
For a sink $v \in V$, we define a {\it reflection functor} at  $v$ 
\[
\Phi_v^+ :  HRep (\Gamma) \rightarrow  HRep(\sigma_v^+(\Gamma))
\]  
between the  categories  of Hilbert representations of $\Gamma$  
and $\sigma_v^+(\Gamma)$ as follows: 
For a Hilbert representation $(H,f)$ of $\Gamma$, 
we define a Hilbert representation 
$(K,g) = \Phi_v^+(H,f)$ of $\sigma_v^+(\Gamma)$.  
Let
\[
h_v: \oplus_{\alpha \in E^v} H_{s(\alpha)} \rightarrow H_v 
\]
be a bounded linear operator defined by 
\[
h_v((x_{\alpha})_{\alpha \in E^v}) 
= \sum_{\alpha \in E^v} f_{\alpha}(x_{\alpha}).
\]
We shall define 
\[
K_v := \Ker h_v = \{ (x_{\alpha})_{\alpha \in E^v} \in 
\oplus_{\alpha \in E^v} H_{s(\alpha)}
; \ 
      \sum_{\alpha \in E^v} f_{\alpha}(x_{\alpha}) = 0 \}. 
\]
We also consider the canonical inclusion map 
$i_v : K_v \rightarrow \oplus_{\alpha \in E^v} H_{s(\alpha)}$. 
For $u \in V$ with $u \not=v$, put $K_u = H_u$. 

For $\beta \in E^v$, let 
\[
P_{\beta} : \oplus_{\alpha \in E^v} H_{s(\alpha)} 
\rightarrow H_{s(\beta)}
\]
be the canonical projection. Then we shall define 
\[
g_{\overline{\beta}} :  K_{s(\overline{\beta})} = K_v \rightarrow 
K_{r(\overline{\beta})} = H_{s(\beta)} \ \ 
\text{ by } \  g_{\overline{\beta}} = P_{\beta} \circ i_v
\]
that is, 
$g_{\overline{\beta}}((x_{\alpha})_{\alpha \in E^v}) = x_{\beta}$. 

For $\beta \not\in E^v$, let $g_{\beta} = f_{\beta}$.  

For a homomorphism $T : (H,f) \rightarrow (H',f')$, 
we define a homomorphism 
\[
S = (S_u)_{u \in V}= \Phi_v^+(T) : (K,g) 
= \Phi_v^+(H,f) \rightarrow 
(K',g') = \Phi_v^+(H',f')
\] If  $u = v$, a bounded operator 
$S_v : K_v \rightarrow K_v'$ is given by 
\[
S_v((x_{\alpha})_{\alpha \in E^v}) 
= (T_{s(\alpha)}(x_{\alpha}))_{\alpha \in E^v}.
\]

It is easily seen that $S_v$ is well-defined and we have the 
following commutative diagram: 
\[
\begin{CD}
0 @>>> K_v @>i_v>> \oplus_{\alpha \in E^v} H_{s(\alpha)} 
 @>h_v>> H_v \\
@.  @V S_v VV  @V 
(T_{s(\alpha)})_{\alpha \in E^v} VV @V T_v VV \\
0 @>>> {K'}_v  @>{i'}_v>> \oplus_{\alpha \in E^v} {H'}_{s(\alpha)}  
@>h_v'>>{H'}_v
\end{CD}
\]

For other $u \in V$ with $u \not= v$,  put 
\[
S_u = T_u : K_u = H_u \rightarrow K_u' = H_u'. 
\] 

\smallskip

We also consider a dual of the above construction.  
We say that a vertex $v \in V$ is a {\it source } 
if $v \not= r(\alpha)$ 
for any $\alpha \in E$. Put 
$E_v = \{ \alpha \in E ; \ s(\alpha) = v \}$. 

\smallskip
\noindent
{\bf Definition.}\cite{EW3}
Let $\Gamma=(V,E,s,r)$ be a finite quiver.  
For a source $v \in V$, we shall construct  a new quiver 
$\sigma_v^-(\Gamma) = (\sigma_v^-(V), \sigma_v^-(E), s,r)$ 
as follows: All the arrows of $\Gamma$ having $v$ as source 
are reversed and all the other arrows remain unchanged.  
That is, 
\[
 \sigma_v^-(V) = V \ \ \ 
\sigma_v^-(E) = (E \setminus E_v) \cup \overline{E_v} , 
\]
where $\overline{E_v} = \{ \overline{\alpha } ; 
\ \alpha \in E_v \}$.

\smallskip
\noindent
{\bf Definition.}\cite{EW3} (reflection functor $\Phi_v^-$.)
Let $\Gamma=(V,E,s,r)$ be a finite quiver.  
For a source $v \in V$, we shall define a {\it reflection functor} at $v$ 
\[
\Phi_v^- :  HRep (\Gamma) \rightarrow  HRep(\sigma_v^-(\Gamma))
\]  
between the  categories  of Hilbert representations of $\Gamma$  
and $\sigma_v^-(\Gamma)$ as follows: 
For a Hilbert representation $(H,f)$ of $\Gamma$, 
we define a Hilbert representation 
$(K,g) = \Phi_v^-(H,f)$ of $\sigma_v^-(\Gamma)$.  
Let
\[
\hat{h}_v: H_v  \rightarrow \oplus_{\alpha \in E_v} H_{r(\alpha)}   
\]
be a bounded linear operator defined by 
\[
\hat{h}_v(x) 
= (f_{\alpha}(x))_{\alpha \in E_v} \ \text{ for } x \in H_v.
\]
We shall define
\[
K_v := (\Im \hat{h}_v)^{\perp} = \Ker \hat{h}_v^* \subset 
\oplus_{\alpha \in E_v} H_{r(\alpha)},  
\]
where $\hat{h}_v^* : \oplus_{\alpha \in E_v} H_{r(\alpha)} 
\rightarrow H_v$ is given 
$\hat{h}_v^*((x_{\alpha})_{\alpha \in E_v}) 
= \sum f_{\alpha}^*(x_{\alpha})$. 
For $u \in V$ with $u \not=v$, put $K_u = H_u$. 

Let $Q_v : \oplus_{\alpha \in E_v} H_{r(\alpha)} \rightarrow K_v$ 
be the canonical projection.  
For $\beta \in E_v$, let 
\[
j_{\beta} : H_{r(\beta)} 
\rightarrow \oplus_{\alpha \in E_v} H_{r(\alpha)}
\]
be the canonical inclusion. 
We shall define 
\[
g_{\overline{\beta}} :  K_{s(\overline{\beta})} = 
H_{r(\beta)} \rightarrow K_{r(\overline{\beta})} = K_v \ \ 
\text{ by }  g_{\overline{\beta}} = Q_v \circ j_{\beta} .
\]

For $\beta \not\in E_v$, let $g_{\beta} = f_{\beta}$.  

For a homomorphism $T : (H,f) \rightarrow (H',f')$, 
we shall define a homomorphism \[
S = (S_u)_{u \in V}= \Phi_v^-(T) : (K,g) 
= \Phi_v^-(H,f) \rightarrow 
(K',g') = \Phi_v^-(H',f').
\] 
For $u = v$, a bounded operator 
$S_v : K_v \rightarrow K_v'$ is given by 
\[
S_v((x_{\alpha})_{\alpha \in E_v}) 
= Q_v'((T_{r(\alpha)}(x_{\alpha}))_{\alpha \in E_v}),
\]
where $Q_v' : \oplus_{\alpha \in E_v} H'_{r(\alpha)} 
\rightarrow K'_v$ 
be the canonical projection. 

We have the 
following commutative diagram: 

\[
\begin{CD}
H_v @>\hat{h}_v>> \oplus_{\alpha \in E_v} H_{r(\alpha)} 
@>Q_v>> K_v @>>> 0 \\
@V T_v VV  @V \oplus_{\alpha \in E_v} T_{r(\alpha)} VV  @V S_v VV @.\\
{H'}_v @>{{\hat{h}}'}_v>> \oplus_{\alpha \in E_v} {H'}_{r(\alpha)} 
 @>{Q'}_v>> {K'}_v
@>>>0
\end{CD}
\]
For other $u \in V$ with $u \not= v$,  put 
\[
S_u = T_u : K_u = H_u \rightarrow K_u' = H_u'. 
\] 

We shall describe a relation between two (covariant) functors 
$\Phi_v^+$ and $\Phi_v^-$. We shall define another 
(contravariant) functor $\Phi^*$ at the beginning.

Let $\Gamma=(V,E,s,r)$ be a finite quiver.  We shall define 
the opposite quiver 
$\overline{\Gamma}=(\overline{V},\overline{E},s,r)$ 
by reversing all the arrows, more precisely, 
\[
 \overline{V} = V \ \ \text{ and  } \ \ 
\overline{E} = \{ \overline{\alpha} ; \ \alpha \in E \}.
\]
\smallskip
\noindent

{\bf Definition.}\cite{EW3} Let $\Gamma=(V,E,s,r)$ be a finite quiver 
and $\overline{\Gamma}=(\overline{V},\overline{E},s,r)$ 
its opposite quiver. 
We shall define a contravariant functor   
\[
\Phi^*  :  HRep (\Gamma) \rightarrow  HRep(\overline{\Gamma})
\]  
between the  categories  of Hilbert representations of $\Gamma$  
and $\overline{\Gamma}$ as follows: 
For a Hilbert representation $(H,f)$ of $\Gamma$, 
we define a Hilbert representation 
$(K,g) = \Phi^*(H,f)$ of $\overline{\Gamma}$ by 
\[
K_u = H_u  \text{ for } u \in V \ \ \ \text{ and } 
g_{\overline{\alpha}} = f_{\alpha}^* \text{ for } 
\alpha  \in E.
\]
For a homomorphism $T : (H,f) \rightarrow (H',f')$, 
we define a homomorphism 
\[
S = (S_u)_{u \in V} = \Phi^*(T) : (K',g') 
= \Phi^*(H',f') \rightarrow 
(K,g) = \Phi^*(H,f),
\]  by bounded operators 
$S_u : K_u'= H_u' \rightarrow K_u = H_u$ given by 
$S_u = T_u^*$. 

\begin{prop}\cite[Proposition 4.2.]{EW3}
Let $\Gamma=(V,E,s,r)$ be a finite quiver. 
If $v \in V$ is a source of $\Gamma$, then 
$v$ is a sink of $\overline{\Gamma}$, 
$\sigma_v^{-}(\Gamma)
=\overline{\sigma_v^{+}(\overline{\Gamma})}$ 
and  the following assertions hold: 
\begin{itemize}
\item [(1)] 
For a  Hilbert representation $(H,f)$ of $\Gamma$, 
\[
\Phi_v^{-}(H,f)=\Phi^* (\Phi_v^{+}(\Phi^*(H,f))).
\]
\item[(2)]
For a homomorphism $T : (H,f) \rightarrow (H',f')$, 
\[
\Phi_v^{-}(T)=\Phi^* (\Phi_v^{+}(\Phi^*(T))).
\]
\end{itemize}
\label{prop:+=*-*}
\end{prop}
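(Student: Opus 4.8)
The plan is to handle the two combinatorial identities first and then to verify the functorial assertions (1) and (2) by a diagram chase that exhibits $\Phi^*$ as the operation turning the defining data of $\Phi_v^{-}$ into that of $\Phi_v^{+}$ by passing to Hilbert space adjoints.

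First I would check the quiver-level statements. That $v$ is a source of $\Gamma$ means $v\neq r(\alpha)$ for every $\alpha\in E$; since $s(\overline{\alpha})=r(\alpha)$ for the reversed arrow, this is exactly the statement that $v\neq s(\overline{\alpha})$ for every $\overline{\alpha}\in\overline{E}$, i.e.\ $v$ is a sink of $\overline{\Gamma}$. For the identity $\sigma_v^{-}(\Gamma)=\overline{\sigma_v^{+}(\overline{\Gamma})}$ I would compute arrow sets directly: inside $\overline{\Gamma}$ the set of arrows with range $v$ is $\{\overline{\alpha};\ r(\overline{\alpha})=v\}=\{\overline{\alpha};\ s(\alpha)=v\}=\overline{E_v}$, so $\sigma_v^{+}(\overline{\Gamma})$ has arrow set $(\overline{E}\setminus\overline{E_v})\cup\overline{\overline{E_v}}$, which, using $\overline{\overline{\alpha}}=\alpha$, equals $\overline{E\setminus E_v}\cup E_v$; reversing all arrows once more gives $(E\setminus E_v)\cup\overline{E_v}=\sigma_v^{-}(E)$. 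The support/range maps match under the same bookkeeping, so $\sigma_v^{-}(\Gamma)=\overline{\sigma_v^{+}(\overline{\Gamma})}$.

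Next I would prove (1) by unwinding the right-hand side. The representation $\Phi^*(H,f)$ of $\overline{\Gamma}$ has the same spaces $H_u$ and structure maps $f_\alpha^*$. Applying $\Phi_v^{+}$ at the sink $v$ of $\overline{\Gamma}$, the relevant operator $h_v:\oplus_{\overline{\alpha}\in\overline{E_v}}H_{r(\alpha)}\to H_v$ sends $(x_\alpha)_\alpha$ to $\sum_{\alpha\in E_v}f_\alpha^*(x_\alpha)$, i.e.\ under the identification of the index set $\overline{E_v}$ with $E_v$ it is precisely $\hat{h}_v^*$. Hence the new space at $v$ is $\Ker h_v=\Ker\hat{h}_v^*=(\Im\hat{h}_v)^{\perp}$, which is exactly the space assigned to $v$ by $\Phi_v^{-}$; no subtlety about closures arises, because the two definitions produce kernels of the same operator. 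For $u\neq v$ all three functors leave $H_u$ unchanged, so the underlying families of spaces of $\Phi^*(\Phi_v^{+}(\Phi^*(H,f)))$ and $\Phi_v^{-}(H,f)$ agree. For the structure maps: an arrow $\overline{\beta}$ with $\beta\notin E_v$ carries $(f_\beta^*)^*=f_\beta$ on both sides (using $f^{**}=f$ for a bounded operator between Hilbert spaces); a new arrow $\overline{\beta}$ with $\beta\in E_v$ carries $g_{\overline{\beta}}=Q_v\circ j_\beta$ in $\Phi_v^{-}(H,f)$, while in $\Phi^*(\Phi_v^{+}(\Phi^*(H,f)))$ it carries $(P_{\overline\beta}\circ i_v)^*=i_v^*\circ P_{\overline\beta}^*$, and I would invoke the elementary facts that the adjoint of the isometric inclusion $i_v:K_v\hookrightarrow\oplus H_{r(\alpha)}$ is the orthogonal projection $Q_v$ and that the adjoint of a coordinate projection $P_{\overline\beta}$ is the corresponding coordinate inclusion $j_\beta$; this gives $i_v^* P_{\overline\beta}^*=Q_v j_\beta$, proving (1).

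Finally I would prove (2). First one checks directions: since $\Phi^*$ is contravariant and $\Phi_v^{\pm}$ covariant, $\Phi^*(\Phi_v^{+}(\Phi^*(T)))$ is a homomorphism $\Phi^*(\Phi_v^{+}(\Phi^*(H,f)))\to\Phi^*(\Phi_v^{+}(\Phi^*(H',f')))$, which by (1) is a homomorphism $\Phi_v^{-}(H,f)\to\Phi_v^{-}(H',f')$, the same source and target as $\Phi_v^{-}(T)$. Componentwise, for $u\neq v$ one gets $(T_u^*)^*=T_u=\Phi_v^{-}(T)_u$. For $u=v$, $\Phi_v^{+}(\Phi^*(T))_v$ is the map induced by $\oplus_\alpha T_{r(\alpha)}^*$ between the subspaces $K_v'$ and $K_v$, i.e.\ the unique $\psi$ with $i\circ\psi=(\oplus_\alpha T_{r(\alpha)}^*)\circ i'$, so $\psi=i^*\circ(\oplus_\alpha T_{r(\alpha)}^*)\circ i'$; taking adjoints gives $\psi^*=(i')^*\circ(\oplus_\alpha T_{r(\alpha)})\circ i=Q_v'\circ(\oplus_\alpha T_{r(\alpha)})\circ i$, which is precisely the formula defining $\Phi_v^{-}(T)_v$. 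Hence (2) holds. The hard part is not any single computation but the bookkeeping: keeping straight the three incarnations of an arrow ($\alpha$, $\overline{\alpha}$, $\overline{\overline{\alpha}}=\alpha$), the identification of index sets $\overline{E_v}\leftrightarrow E_v$, and the directions of homomorphisms under the contravariant $\Phi^*$; I would present the argument as the single diagram chase that applying $\Phi^*$ carries the defining commutative diagram of $\Phi_v^{-}$ (built from $\hat{h}_v$, $Q_v$, $j_\beta$) to that of $\Phi_v^{+}$ (built from $h_v$, $i_v$, $P_\beta$), via $i_v^*=Q_v$, $j_\beta^*=P_\beta$ and $f^{**}=f$, rather than writing out every component.
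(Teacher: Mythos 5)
Your verification is correct and complete: the combinatorial identities, the identification of $h_v$ (for $\Phi_v^{+}$ on $\Phi^*(H,f)$) with $\hat{h}_v^{*}$ so that $\Ker h_v=(\Im\hat{h}_v)^{\perp}$, the adjoint identities $i_v^{*}=Q_v$, $P_{\overline{\beta}}^{*}=j_{\beta}$, $f^{**}=f$, and the computation $\psi^{*}=Q_v'\circ(\oplus_{\alpha}T_{r(\alpha)})\circ i_v$ for the $v$-component in (2) are all exactly what is needed. The paper itself imports this proposition from \cite{EW3} without reproducing a proof, and your direct diagram chase is the standard argument for it, so there is nothing to flag.
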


\begin{prop}\cite[Proposition 4.3.]{EW3}
Let $\Gamma=(V,E,s,r)$ be a finite quiver. 
If $v \in V$ is a sink of $\Gamma$, then 
$v$ is a source of $\overline{\Gamma}$, 
$\sigma_v^{+}(\Gamma)
=\overline{\sigma_v^{-}(\overline{\Gamma})}$ 
and the following assertions hold:
\begin{itemize}
\item [(1)] 
For a  Hilbert representation $(H,f)$ of $\Gamma$, 
\[
\Phi_v^{+}(H,f)=\Phi^* (\Phi_v^{-}(\Phi^*(H,f))). 
\]
\item[(2)]
For a homomorphism $T : (H,f) \rightarrow (H',f')$, 
\[
\Phi_v^{+}(T)=\Phi^* (\Phi_v^{-}(\Phi^*(T))).
\]
\end{itemize}
\end{prop}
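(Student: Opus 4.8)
The plan is to derive this proposition from Proposition \ref{prop:+=*-*} by dualizing, that is, by applying that proposition to the opposite quiver $\overline{\Gamma}$ and exploiting the fact that the contravariant functor $\Phi^*$ is an involution. First I would record the two elementary observations on which everything rests. One: under the identification $\overline{\overline{\alpha}}=\alpha$ for $\alpha\in E$ (so that $\overline{\overline{\Gamma}}=\Gamma$), the composite $\Phi^*\circ\Phi^*$ is literally the identity functor on the category of Hilbert representations of $\Gamma$, since $(f_{\alpha}^*)^*=f_{\alpha}$ on representations and $(T_u^*)^*=T_u$ on homomorphisms. Two: if $v$ is a sink of $\Gamma$ then $v$ is a source of $\overline{\Gamma}$, because the arrows $E^v$ with range $v$ become exactly the arrows $\overline{E^v}$ with source $v$ in $\overline{\Gamma}$.

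Next I would verify the combinatorial identity $\sigma_v^{+}(\Gamma)=\overline{\sigma_v^{-}(\overline{\Gamma})}$. Starting from $\overline{\Gamma}=(V,\overline{E},s,r)$ with source $v$, the operation $\sigma_v^{-}$ reverses precisely the arrows in $\overline{E^v}$, turning each $\overline{\alpha}$ back into $\alpha$ (again using $\overline{\overline{\alpha}}=\alpha$), while leaving $\{\overline{\beta};\ \beta\in E\setminus E^v\}$ unchanged; thus $\sigma_v^{-}(\overline{\Gamma})$ has arrow set $E^v\cup\{\overline{\beta};\ \beta\in E\setminus E^v\}$. Taking the opposite quiver then reverses every arrow once more, producing arrow set $\overline{E^v}\cup(E\setminus E^v)$, which is exactly $\sigma_v^{+}(\Gamma)$. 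In particular $\overline{\sigma_v^{+}(\Gamma)}=\sigma_v^{-}(\overline{\Gamma})$, so all the source and target categories appearing in (1) and (2) match up correctly.

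With these in place I would apply Proposition \ref{prop:+=*-*} to the quiver $\overline{\Gamma}$ and its source $v$. For an arbitrary Hilbert representation $(K,g)$ of $\overline{\Gamma}$ this gives $\Phi_v^{-}(K,g)=\Phi^*(\Phi_v^{+}(\Phi^*(K,g)))$. Substituting $(K,g)=\Phi^*(H,f)$ for a representation $(H,f)$ of $\Gamma$ and using $\Phi^*\circ\Phi^*$ to simplify $\Phi^*(\Phi^*(H,f))=(H,f)$, I obtain $\Phi_v^{-}(\Phi^*(H,f))=\Phi^*(\Phi_v^{+}(H,f))$; applying $\Phi^*$ to both sides and invoking the involution property once more yields assertion (1), namely $\Phi_v^{+}(H,f)=\Phi^*(\Phi_v^{-}(\Phi^*(H,f)))$. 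The same substitution into Proposition \ref{prop:+=*-*}(2), taking the homomorphism $\Phi^*(T)$ in place of $T$, gives assertion (2) after the identical two-step manipulation.

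There is essentially no analytic content here; the one place that demands care is the bookkeeping of the second paragraph together with tracking which category each object and arrow lives in, in particular the contravariance of $\Phi^*$, which one must check recombines into a covariant composite $\Phi^*\circ\Phi^*$ equal to the identity so that assertion (2) comes out with the arrows pointing the right way. That bookkeeping is the main, and only, obstacle.
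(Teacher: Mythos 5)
Your derivation is correct. The paper itself gives no proof of this proposition --- it is imported verbatim from \cite[Proposition 4.3]{EW3}, just as Proposition \ref{prop:+=*-*} is --- and deducing it from Proposition \ref{prop:+=*-*} by applying that statement to $\overline{\Gamma}$ at the source $v$ and using that $\Phi^*\circ\Phi^*$ is the identity (under the identification $\overline{\overline{\alpha}}=\alpha$) is exactly the intended duality argument; your bookkeeping of the arrow sets, of the contravariance of $\Phi^*$, and of the two-step cancellation in (1) and (2) is all sound.
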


We shall investigate endomorphisms of 
Hilbert representations and 
its images of reflection functors.
In the case of infinite-dimensional Hilbert 
representations, 
we need assume a certain 
closedness condition at a sink or a source. 

\noindent  {\bf Definition.}\cite{EW3} Let $\Gamma=(V,E,s,r)$ be a finite quiver and 
 $v \in V$ a sink.  We recall that 
$E^v = \{\alpha ; \ r(\alpha) = v\}$. 
It is said that a Hilbert representation $(H,f)$  of $\Gamma$ 
is {\it closed} at $v$ if 
$\sum_{\alpha \in E^v} \Im f_{\alpha} \subset H_v$ 
is a closed subspace.  It is said that 
$(H,f)$ is {\it full} at $v$ if 
$\sum_{\alpha \in E^v} \Im f_{\alpha} = H_v$. 

\smallskip

\noindent {\bf Definition.}(\cite{EW3}) Let $\Gamma=(V,E,s,r)$ be a finite quiver and 
 $v \in V$ a source.  We recall that $E_v = \{\alpha | s(\alpha) = v\}$.  
It is said that a Hilbert representation $(H,f)$  of $\Gamma$ 
is {\it co-closed} at $v$ if 
$\sum_{\alpha \in E_v} \Im f_{\alpha}^* \subset H_v$ 
is a closed subspace.  It is said  that 
$(H,f)$ is {\it co-full} at $v$ if 
$\sum_{\alpha \in E_v} \Im f_{\alpha}^* = H_v$. 

We note that the properties of fullness,co-fullness,closedness and co-closedness are preserved under isomorphism of Hilbert representations.
\begin{lemma}
Let $\Gamma$ be a finite quiver 
and $v\in \Gamma$ a sink.
Let $(H,f)$ 
and $(K,g)$ be isomorphic Hilbert representations of $\Gamma$.
If $(H,f)$ is full (resp.closed ) at $v$, then 
$(K,g)$ is full (resp.closed ) at $v$.
\label{lemma iso-full}
\end{lemma}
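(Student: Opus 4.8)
The plan is to transport everything forward along the isomorphism and to use that each component $\varphi_u$ of an isomorphism $\varphi=(\varphi_u)_{u\in V}:(H,f)\to(K,g)$ is a bounded invertible operator, hence a linear homeomorphism, so it carries closed subspaces to closed subspaces and $H_u$ onto $K_u$.

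First I would fix an isomorphism $\varphi:(H,f)\to(K,g)$ and recall the defining intertwining relation $\varphi_{r(\alpha)}f_\alpha=g_\alpha\varphi_{s(\alpha)}$ for every arrow $\alpha\in E$. Specializing to $\alpha\in E^v$ (so that $r(\alpha)=v$) gives $\varphi_v f_\alpha=g_\alpha\varphi_{s(\alpha)}$. Since $\varphi_{s(\alpha)}:H_{s(\alpha)}\to K_{s(\alpha)}$ is bijective, $\Im(g_\alpha\varphi_{s(\alpha)})=g_\alpha(\varphi_{s(\alpha)}(H_{s(\alpha)}))=\Im g_\alpha$, while $\Im(\varphi_v f_\alpha)=\varphi_v(\Im f_\alpha)$. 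Hence $\varphi_v(\Im f_\alpha)=\Im g_\alpha$ for each $\alpha\in E^v$. Summing (algebraically) over $\alpha\in E^v$ and using linearity of $\varphi_v$,
\[
\varphi_v\Big(\sum_{\alpha\in E^v}\Im f_\alpha\Big)=\sum_{\alpha\in E^v}\varphi_v(\Im f_\alpha)=\sum_{\alpha\in E^v}\Im g_\alpha .
\]

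Then I would conclude in the two cases. If $(H,f)$ is full at $v$, then $\sum_{\alpha\in E^v}\Im f_\alpha=H_v$; applying $\varphi_v$ to both sides gives $\sum_{\alpha\in E^v}\Im g_\alpha=\varphi_v(H_v)=K_v$, so $(K,g)$ is full at $v$. If $(H,f)$ is closed at $v$, set $M:=\sum_{\alpha\in E^v}\Im f_\alpha$, a closed subspace of $H_v$; as $\varphi_v$ is a bounded invertible operator, it and its inverse are continuous, so $\varphi_v(M)$ is a closed subspace of $K_v$, and by the displayed identity $\varphi_v(M)=\sum_{\alpha\in E^v}\Im g_\alpha$, whence $(K,g)$ is closed at $v$.

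The argument is essentially formal; there is no serious obstacle, but the two points that must be stated with a little care are: (i) the equality $\varphi_v(\Im f_\alpha)=\Im g_\alpha$ uses the surjectivity of $\varphi_{s(\alpha)}$, not merely that of $\varphi_v$; and (ii) the fact that a bounded bijection carries closed subspaces onto closed subspaces, which rests on the continuity of the inverse (open mapping theorem). No approximation or limiting argument is needed, and the empty case $E^v=\emptyset$ is covered automatically since both sums then equal $\{0\}$.
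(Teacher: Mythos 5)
Your proof is correct and follows essentially the same route as the paper: transporting the fullness/closedness condition along the isomorphism via the intertwining relation $\varphi_v f_\alpha = g_\alpha\varphi_{s(\alpha)}$ and the invertibility of the components. The only difference is presentational --- the paper chases elements for the fullness case and dismisses closedness with ``similarly,'' whereas you work directly with the identity $\varphi_v\bigl(\sum_{\alpha\in E^v}\Im f_\alpha\bigr)=\sum_{\alpha\in E^v}\Im g_\alpha$, which handles both cases at once.
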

\begin{proof}
Assume that  $(H,f)$ is full at $v$.
Since $(H,f)$ and $(K,g)$ are isomorphic,
there exists a family $S=(S_{u})_{u\in V}$ of bounded invertible operators such that  
$S_{r(\alpha)}f_{\alpha}
=g_{\alpha}S_{s(\alpha)}$ for $\alpha\in E$.
Take an element $y\in K_{v}$.
By the invertibility of $S_{v}$, there exists an element $x\in H_{v}$ such that
$S_{v}(x)=y$.
Since $(H,f)$ is full at $v$, there exist $x_{s(\alpha)}\in H_{s(\alpha)}$ such that
$\sum_{\alpha\in E^{v}}f_{\alpha}(x_{s(\alpha)})=x$.
We put $y_{s(\alpha)}:=S_{s(\alpha)}(x_{s(\alpha)})$.
Then

\begin{displaymath}
\sum_{\alpha\in E^{v}}  g_{\alpha}(y_{s(\alpha)})
=\sum_{\alpha\in E^{v}}  g_{\alpha}S_{s(\alpha)}(x_{s(\alpha)})
=\sum_{\alpha\in E^{v}}  S_{v}f_{\alpha}(x_{s(\alpha)})\\
\end{displaymath}
\begin{displaymath}
=S_{v}\sum_{\alpha\in E^{v}}  f_{\alpha}(x_{s(\alpha)})
=S_{v}(x)
=y
\end{displaymath}

Hence $(K,g)$ is full at $v$.

We can similarly prove that closedness property is preserved under
isomorphism of Hilbert representations.

\end{proof}
\begin{lemma}
Let $\Gamma$ be a finite quiver and
 $v\in V$  a source.
Let $(H,f)$ 
and $(K,g)$ be isomorphic Hilbert representations of $\Gamma$.
If $(H,f)$ is  co-full (resp.co-closed) at $v$, then 
$(K,g)$ is  co-full (resp.co-closed) at $v$.
\label{lemma iso-co-full}
\end{lemma}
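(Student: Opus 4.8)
The plan is to mimic the proof of Lemma~\ref{lemma iso-full}, replacing the operators $f_\alpha$, $g_\alpha$ by their adjoints $f_\alpha^*$, $g_\alpha^*$ and the intertwining relation by its adjoint. Concretely, since $(H,f)\cong(K,g)$, there is a family $S=(S_u)_{u\in V}$ of bounded invertible operators with $S_{r(\alpha)}f_\alpha=g_\alpha S_{s(\alpha)}$ for every $\alpha\in E$. Taking adjoints yields $f_\alpha^*S_{r(\alpha)}^*=S_{s(\alpha)}^*g_\alpha^*$, and since each $S_u$ is invertible so is each $S_u^*$, with $(S_u^*)^{-1}=(S_u^{-1})^*$; hence $g_\alpha^*=(S_{s(\alpha)}^*)^{-1}f_\alpha^*S_{r(\alpha)}^*$.

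First I would fix the source $v$ and recall that $E_v=\{\alpha\in E\,;\,s(\alpha)=v\}$, so that for $\alpha\in E_v$ the relation above reads $g_\alpha^*=(S_v^*)^{-1}f_\alpha^*S_{r(\alpha)}^*$. The key observation is that right-multiplication by the invertible operator $S_{r(\alpha)}^*$ does not change the image, so $\Im g_\alpha^*=(S_v^*)^{-1}\big(\Im f_\alpha^*\big)$. Summing over $\alpha\in E_v$ and using linearity of the bounded invertible operator $(S_v^*)^{-1}$ gives
\[
\sum_{\alpha\in E_v}\Im g_\alpha^* = (S_v^*)^{-1}\Big(\sum_{\alpha\in E_v}\Im f_\alpha^*\Big).
\]

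From this identity both assertions follow at once. If $(H,f)$ is co-full at $v$, then $\sum_{\alpha\in E_v}\Im f_\alpha^*=H_v$, and the right-hand side equals $(S_v^*)^{-1}H_v=K_v$ by surjectivity of $(S_v^*)^{-1}$, so $(K,g)$ is co-full at $v$. If $(H,f)$ is co-closed at $v$, then $\sum_{\alpha\in E_v}\Im f_\alpha^*$ is a closed subspace of $H_v$; since $(S_v^*)^{-1}$ is a homeomorphism of $H_v$ onto $K_v$ (open mapping theorem), it carries this closed subspace to a closed subspace of $K_v$, so $(K,g)$ is co-closed at $v$.

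I expect no serious obstacle: this is the exact adjoint dual of Lemma~\ref{lemma iso-full}. Alternatively one could deduce it formally by applying the contravariant functor $\Phi^*$, which turns the source $v$ into a sink of $\overline{\Gamma}$ and converts co-fullness (resp.\ co-closedness) at $v$ into fullness (resp.\ closedness) at $v$, and which sends the isomorphism $S$ to the isomorphism $(S_u^*)_{u\in V}$; then Lemma~\ref{lemma iso-full} applies directly. The only points requiring a little care are the bookkeeping with adjoints and the direction of the intertwiner, together with the harmless fact that $\Im(TU)=\Im T$ for invertible $U$.
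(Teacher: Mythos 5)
Your argument is correct, but your primary route is not the one the paper takes. The paper's proof is a two-line functorial reduction: since $(H,f)\cong(K,g)$ implies $\Phi^*(H,f)\cong\Phi^*(K,g)$, and $\Phi^*$ turns the source $v$ into a sink and converts $\sum_{\alpha\in E_v}\Im f_\alpha^*$ into the corresponding sum of images at a sink, co-fullness reduces to the fullness case of Lemma~\ref{lemma iso-full}, with co-closedness dispatched by ``similarly.'' That is exactly the alternative you sketch in your last paragraph. Your main argument instead verifies everything directly: from $S_{r(\alpha)}f_\alpha=g_\alpha S_{s(\alpha)}$ you pass to adjoints, use that $(S_u^*)^{-1}=(S_u^{-1})^*$ is bounded and invertible, and obtain the clean identity $\sum_{\alpha\in E_v}\Im g_\alpha^*=(S_v^*)^{-1}\bigl(\sum_{\alpha\in E_v}\Im f_\alpha^*\bigr)$, from which both co-fullness and co-closedness follow at once (surjectivity, respectively the homeomorphism property, of $(S_v^*)^{-1}$). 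The direct computation is more self-contained and actually more explicit than the paper on the co-closed case, which the paper's proof of Lemma~\ref{lemma iso-full} only asserts can be done ``similarly''; the functorial reduction is shorter but leans on $\Phi^*$ and on the unproved half of the previous lemma. Both are sound; the bookkeeping in your adjoint identities (domains and codomains of $f_\alpha^*$, $g_\alpha^*$, $S_u^*$) checks out.
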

\begin{proof}
Since $(H,f)$ and $(K,g)$ are isomorphic,
$\Phi^*(H,f)$ and $\Phi^*(K,g)$ are isomorphic.
Hence the case of co-fullness is reduced to  the case of fullness.
We can similarly prove that co-closedness property is preserved under
isomorphism of Hilbert representations.
\end{proof}

\noindent 

The following theorem is well known for finite-dimensional Hilbert spaces
(\cite[page289,5.7.Corollary]{As} and \cite[page16,Proposition 2.1]{DR}).
\begin{thm}
Let $\Gamma=(V,E,s,r)$ be a finite quiver and 
 $v \in V$ a sink. If  a Hilbert representation $(H,f)$ of 
$\Gamma$ is full at $v$, then 
the map $\Phi_{v}^{+} :End(H,f)\to End(\Phi_{v}^{+}(H,f))$
 is an isomorphism
as  $\mathbb{C}$-algebras.
\label{thm phi+}
\end{thm}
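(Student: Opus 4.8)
The plan is to verify directly that $\Phi_v^+$ restricted to $End(H,f)$ is an algebra homomorphism, that it is injective, and that it is surjective, where the fullness hypothesis at $v$ is exactly what is needed for surjectivity. Since $\Phi_v^+$ is already known to be a functor, for any $T\in End(H,f)$ we automatically get $\Phi_v^+(T)\in End(\Phi_v^+(H,f))$, and functoriality gives $\Phi_v^+(T\circ T') = \Phi_v^+(T)\circ\Phi_v^+(T')$ and $\Phi_v^+(I)=I$; linearity in $T$ is immediate from the explicit formula $S_v((x_\alpha)_\alpha)=(T_{s(\alpha)}(x_\alpha))_\alpha$ and $S_u=T_u$ for $u\ne v$. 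So the content is bijectivity.

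For injectivity, suppose $\Phi_v^+(T)=0$. Then $S_u=T_u=0$ for every $u\ne v$, so in particular $T_{s(\alpha)}=0$ for every $\alpha\in E^v$ (here I use that $v$ is a sink, so $s(\alpha)\ne v$ for $\alpha\in E^v$). Since $S_v((x_\alpha)_\alpha)=(T_{s(\alpha)}(x_\alpha))_\alpha=0$ automatically, this does not by itself kill $T_v$; instead I use the defining intertwining relations. For $\beta\in E^v$ we have $g_{\overline\beta}=P_\beta\circ i_v$ and the homomorphism condition $S_{s(\overline\beta)}\,\text{(source of }\overline\beta) \to \dots$; more directly, from $T\in End(H,f)$ we have $T_v f_\beta = f_\beta T_{s(\beta)} = 0$ for each $\beta\in E^v$, hence $T_v$ vanishes on $\sum_{\beta\in E^v}\Im f_\beta$. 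By fullness this sum is all of $H_v$, so $T_v=0$, and $T=0$.

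For surjectivity, take $S=(S_u)_{u\in V}\in End(\Phi_v^+(H,f))$. For $u\ne v$ set $T_u:=S_u$; it remains to construct $T_v:H_v\to H_v$ with the right intertwining relations and with $\Phi_v^+(T)=S$, i.e.\ with $S_v((x_\alpha)_\alpha)=(T_{s(\alpha)}(x_\alpha))_\alpha$ for all $(x_\alpha)_\alpha\in K_v=\Ker h_v$. The natural candidate is to define $T_v$ on the image $\Im h_v=\sum_{\alpha\in E^v}\Im f_\alpha$, which by fullness equals $H_v$: given $z=\sum_\alpha f_\alpha(x_\alpha)\in H_v$, set $T_v(z):=\sum_\alpha f_\alpha(S_{s(\alpha)}(x_\alpha))=h_v\big((S_{s(\alpha)}(x_\alpha))_\alpha\big)$. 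The main obstacle is well-definedness and boundedness of $T_v$: if $\sum_\alpha f_\alpha(x_\alpha)=\sum_\alpha f_\alpha(x_\alpha')$, then $(x_\alpha-x_\alpha')_\alpha\in\Ker h_v=K_v$, and applying the hypothesis that $S_v$ maps $K_v$ into $K_v$ together with the commutative diagram defining $S_v$ — namely $h_v\circ i_v = 0$ and $h_v'\circ (T_{s(\alpha)})_\alpha = T_v\circ h_v$ is exactly what we are building — we get $h_v\big((S_{s(\alpha)}(x_\alpha-x_\alpha'))_\alpha\big)=0$, so $T_v(z)$ is independent of the representation of $z$; here one uses $S_v\big((x_\alpha-x_\alpha')_\alpha\big)=(S_{s(\alpha)}(x_\alpha-x_\alpha'))_\alpha$ which lies in $K_v=\Ker h_v$. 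Boundedness then follows from the open mapping theorem: $h_v:\oplus_{\alpha\in E^v}H_{s(\alpha)}\to H_v$ is a bounded surjection (by fullness), hence open, so $T_v$, being $h_v\circ(\oplus S_{s(\alpha)})\circ(\text{bounded right inverse of }h_v)$, is bounded. Finally one checks the intertwining relations: for $\beta\in E^v$, $T_v f_\beta = f_\beta S_{s(\beta)}=f_\beta T_{s(\beta)}$ by construction (take $x_\beta$ arbitrary and all other $x_\alpha=0$), and for $\gamma\notin E^v$ the relation $g_\gamma=f_\gamma$ with $s(\gamma),r(\gamma)\ne v$ gives it from $S$ being an endomorphism; and $\Phi_v^+(T)=S$ on $K_v$ holds because for $(x_\alpha)_\alpha\in K_v$ we have $h_v\big((S_{s(\alpha)}(x_\alpha))_\alpha\big)=0$ (as $S_v(K_v)\subset K_v$), so $(S_{s(\alpha)}(x_\alpha))_\alpha=S_v((x_\alpha)_\alpha)$ by the same diagram, giving $\Phi_v^+(T)_v=S_v$. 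This yields $T\in End(H,f)$ with $\Phi_v^+(T)=S$, completing the proof.
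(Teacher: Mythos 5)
Your proposal is correct and follows essentially the same route as the paper: injectivity from $T_vf_\beta=f_\beta T_{s(\beta)}=0$ on $\sum_{\beta\in E^v}\Im f_\beta=H_v$; surjectivity by defining $T_v\bigl(\sum_\alpha f_\alpha(x_\alpha)\bigr)=\sum_\alpha f_\alpha(S_{s(\alpha)}(x_\alpha))$, with well-definedness coming from $S_v(K_v)\subset K_v$ together with the forced identity $S_v((\xi_\alpha)_\alpha)=(S_{s(\alpha)}(\xi_\alpha))_\alpha$ extracted from the relations $S_{s(\beta)}g_{\overline\beta}=g_{\overline\beta}S_v$, and boundedness from the bounded inverse of $h_v|_{(\Ker h_v)^\perp}$. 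The paper's proof is the same argument written out in more detail.
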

\begin{proof}
We put $(K,g):=\Phi_{v}^{+}(H,f)$.
The mapping $\Phi_{v}^{+}$ gives a mapping of 
$End(H,f)$ to $End(K,g)$.
At first we shall show that 
$\Phi_{v}^{+}$ is one to one.
Assume that $S:=\Phi_{v}^{+}(T)=0$ for $T\in End(H,f)$.
We have
$S_{u}=T_{u}=0(u\ne v)$.
From this we shall show that $T_{v}=0$.
Since $T\in End(H,f)$,
$T_{v}f_{\alpha}=f_{\alpha}T_{s(\alpha)}$ for 
$\alpha\in E^v=\{\alpha\in E;r(\alpha)=v\}$.
Hence, for 
$x_{\alpha}\in H_{s(\alpha)}$,
$$T_{v}(\sum_{\alpha\in E^v} f_{\alpha}(x_{\alpha}))=\sum_{\alpha\in E^v} f_{\alpha}T_{s(\alpha)}(x_{\alpha})=0.$$
Since $(H,f)$ is full at $v$,
$T_{v}=0$.
Thus $\Phi_{v}^{+}$ is one to one.
Next 
we shall show that $\Phi_{v}^{+}$ is onto.
Take $S=(S_{u})_{{u}\in V}\in End(K,g)$.
We put $T_{u}=S_{u}$ for $u\ne v$.
We shall define an operator $T_{v}:H_{v}\to H_{v}$
such that 
$T_{v}(\sum_{\alpha\in E^{v}}
f_{\alpha}(x_{\alpha}))
=\sum_{\alpha\in E^{v}}
f_{\alpha}(T_{s(\alpha)}
(x_{\alpha}))$ for 
$x_{\alpha}\in H_{s(\alpha)}$.
We need to show that
$T_{v}$ is well defined.
If there exists an element 
 $(x_{\alpha}^{\prime})_{\alpha\in E^{v}}\in \oplus _{\alpha\in E^{v}}H_{s(\alpha)}$
such that
$$\sum_{\alpha\in E^{v}}f_{\alpha}(x_{\alpha})
=
\sum_{\alpha\in E^{v}}
f_{\alpha}(x_{\alpha}^{\prime}),$$
then
we must show that
$$\sum_{\alpha\in E^{v}}f_{\alpha}T_{s(\alpha)}(x_{\alpha})
=
\sum_{\alpha\in E^{v}}
f_{\alpha}T_{s(\alpha)}(x_{\alpha}^{\prime}).$$
Since $\sum_{\alpha\in E^{v}}f_{\alpha}(x_{\alpha})
=
\sum_{\alpha\in E^{v}}
f_{\alpha}(x_{\alpha}^{\prime})$,
we have
$$h_{v}((x_{\alpha}-x_{\alpha}^{\prime})_{\alpha\in E^{v}})
=
\sum_{\alpha\in E^{v}}f_{\alpha}(x_{\alpha}-x_{\alpha}^{\prime})=0.$$
Hence
$(x_{\alpha}-x_{\alpha}^{\prime})_{\alpha\in E^{v}}\in \ker h_{v}
=K_{v}.$
Since $S_{v}:K_{v}\to K_{v}$,
we have 
$S_{v}((x_{\alpha}-x_{\alpha}^{\prime})_{\alpha\in E^{v}})\in \ker h_{v}
=K_{v}.$
Hence $h_{v}(
S_{v}((x_{\alpha}-x_{\alpha}^{\prime})_{\alpha\in E^{v}}))=0$.
Since $S\in End(K,g)$,
we have
$$S_{s(\alpha)}g_{\bar{\alpha}}=
g_{\bar{\alpha}}S_{v} \text{ for } \alpha\in E^{v}.$$
We have
$S_{s(\alpha)}g_{\bar{\alpha}}((x_{\beta}-x_{\beta}^{\prime})_{\beta\in E^{v}}))=
S_{s(\alpha)}(x_{\alpha}-x_{\alpha}^{\prime})
=
T_{s(\alpha)}(x_{\alpha}-x_{\alpha}^{\prime})
$
and
$g_{\bar{\alpha}}S_{v}
((x_{\beta}-x_{\beta}^{\prime})_{\beta\in E^{v}}))=
P_{\alpha}
S_{v}
((x_{\beta}-x_{\beta}^{\prime})_{\beta\in E^{v}})).$
Hence
$$T_{s(\alpha)}(x_{\alpha}-x_{\alpha}^{\prime})
=
P_{\alpha}
S_{v}
((x_{\beta}-x_{\beta}^{\prime})_{\beta\in E^{v}})).$$
Then
$$\sum_{\alpha\in E^{v}}  f_{\alpha}T_{s(\alpha)}
(x_{\alpha}-x_{\alpha}^{\prime})
=
\sum_{\alpha\in E^{v}}  f_{\alpha}
P_{\alpha}
S_{v}
((x_{\beta}-x_{\beta}^{\prime})_{\beta\in E^{v}})$$
and
$$\sum_{\alpha\in E^{v}}  f_{\alpha}
P_{\alpha}
S_{v}
((x_{\beta}-x_{\beta}^{\prime})_{\beta\in E^{v}} ))=
h_{v}(S_{v}((x_{\beta}-x_{\beta}^{\prime})_{\beta\in E^{v}} ))=0
.$$
This gives
$$\sum_{\alpha\in E^{v}} f_{\alpha}T_{s(\alpha)}
(x_{\alpha})=\sum_{\alpha\in E^{v}}  f_{\alpha}T_{s(\alpha)}(x_{\alpha}^{\prime}).$$
Thus $T_{v}$ is well defined.
Next we shall show that 
$$T_{v}f_{\alpha}(x)=f_{\alpha}T_{s(\alpha)}(x) \text{ for } x\in H_{s(\alpha)}.$$
Take and fix $x\in H_{s(\alpha)}$ for $\alpha\in E^{v}$.
For $\beta\in E^{v}$,
we put
$$
x_{\beta}= 
\begin{cases} x&  \ \ (\beta=\alpha),  \\
  0 &  \ \  (\beta\ne \alpha).
\end{cases}
$$
Since
$T_{v}(\sum_{\beta\in E^{v}}
f_{\beta}(x_{\beta}))
=\sum_{\beta\in E^{v}}
f_{\beta}(T_{s(\beta)}
(x_{\beta})),$
we have
$$T_{v}f_{\alpha}(x)
=\sum_{\beta}f_{\beta}T_{s(\beta)}(x_{\beta})
=
f_{\alpha}T_{s(\alpha)}(x).$$
Next we shall show that $T_{v}:H_{v}\to H_{v}$
is bounded.
We decompose $\oplus_{\alpha\in E^{v}}H_{s(\alpha)}
=
\ker h_{v}\oplus (\ker h_{v})^{\perp}
=
K_{v}\oplus K_{v}^{\perp}.$
By Banach invertibility theorem,
$h_{v}\vert_{(K_{v})^{\perp}}:(K_{v})^{\perp}\to H_v$
is a bounded invertible operator.
We shall show that there exists a positive constant $c$
such that 
$$\parallel T_{v}x\parallel\leqq c\parallel
x\parallel
\text{ for  any } x\in H_{v}.$$
Take $x=h((x_{\alpha})_{\alpha\in E^{v}})=
\sum_{x\in E^{v}}
f_{\alpha}(x_{\alpha}).$
We get
\begin{align*}
&\parallel T_{v}(x)\parallel
=\parallel \sum_{\alpha\in E^{v}}T_{v}(
f_{\alpha}(x_{\alpha}))\parallel 
=\parallel
\sum_{x\in E^{v}}
f_{\alpha}(T_{s(\alpha)}(x_{\alpha}))\parallel\\
&=\parallel((f_{\alpha}T_{s(\alpha)})_{\alpha\in E^{v}})
((x_{\alpha})_{\alpha\in E^{v}})
\parallel 
\leqq\parallel
((f_{\alpha}T_{s(\alpha)})_{\alpha\in E^{v}})\parallel
\parallel
((x_{\alpha})_{\alpha\in E^{v}})
\parallel \\
&=\parallel
((f_{\alpha}T_{s(\alpha)})_{\alpha\in E^{v}})
\parallel\parallel (h\vert_{K_{v}
^{\perp}})^{-1}\parallel
\parallel 
x
\parallel
\leqq
c\parallel
x\parallel
\\
\end{align*}
where 
$((f_{\alpha}T_{s(\alpha)})_{\alpha\in E^{v}})$
is a row matrix and
$$c:=\parallel
((f_{\alpha}T_{s(\alpha)})_{\alpha\in E^{v}})
\parallel 
\parallel 
(h\vert_{K_{v}
^{\perp}})^{-1}
\parallel.$$
Hence  $T_{v}$ is bounded.
Next we shall show that
$\Phi_{v}^{+}(T)=S.$
Since $S\in End(K,g)$,
$$S_{s(\alpha)}P_{\alpha}i_{v}=S_{s(\alpha)}g_{\bar{\alpha}}=
g_{\bar{\alpha}}S_{v}=P_{\alpha}i_{v}S_{v} \text{ for } \alpha\in E^{v}.$$
For $((x_{\alpha})_{\alpha\in E^{v}})\in K_{v}$,
we have
$$S_{v}((x_{\alpha}))
=(P_{\alpha}i_{v}S_{v}((x_{\alpha})))_{\alpha\in E^{v}}
=(S_{s(\alpha)}P_{\alpha}i_{v}((x_{\alpha})))_{\alpha\in E^{v}}
=(S_{s(\alpha)}(x_{\alpha})).$$
By the definition of 
$\Phi_{v}^{+}(T)$,
$(\Phi_{v}^{+}(T))_{u}=S_{u}=T_{u}$
for $u\ne v$.
For $u=v$ and
$((x_{\alpha})_{\alpha\in E^{v}})\in K_{v}$,
\begin{align*}
&(\Phi_{v}^{+}(T))_{v}((x_{\alpha})_{\alpha\in E^{v}})
=((T_{s(\alpha)}(x_{\alpha}))_{\alpha\in E^{v}})\\
&=((S_{s(\alpha)}(x_{\alpha}))_{\alpha\in E^{v}})
=S_{v}((x_{\alpha})_{\alpha\in E^{v}}).
\end{align*}
Thus 
$(\Phi_{v}^{+}(T))_{v}=S_{v}.$
Hence $\Phi_{v}^{+}(T)=S.$
Hence
 $\Phi_{v}^{+}$
is onto.
We conclude that
$End(H,f)\cong  End(\Phi_{v}^{+}(H,f))$
as  $\mathbb{C}$-algebras.

\end{proof}

\begin{cor}
Let $\Gamma=(V,E,s,r)$ be a finite quiver and 
 $v \in V$ a sink. 
Assume that  a Hilbert representation $(H,f)$ of 
$\Gamma$ is full at $v$.
If   
 $(H,f)$ is transitive (resp.indecomposable),then
$\Phi_{v}^{+}(H,f)$ is transitive(resp.indecomposable).

\end{cor}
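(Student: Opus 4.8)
The plan is to deduce the statement directly from Theorem~\ref{thm phi+}. Since $(H,f)$ is full at $v$, that theorem provides a $\mathbb{C}$-algebra isomorphism
\[
\Phi_v^+ : End(H,f) \longrightarrow End(\Phi_v^+(H,f)).
\]
Moreover, from the definition of $\Phi_v^+$ on homomorphisms one checks immediately that it is unital: for the identity endomorphism $I=(I_{H_u})_{u\in V}$ of $(H,f)$ one has $(\Phi_v^+(I))_u=I_{H_u}$ for $u\neq v$, while $(\Phi_v^+(I))_v((x_\alpha)_{\alpha\in E^v})=(I_{H_{s(\alpha)}}(x_\alpha))_{\alpha\in E^v}=(x_\alpha)_{\alpha\in E^v}$, so $\Phi_v^+(I)$ is the identity endomorphism of $\Phi_v^+(H,f)$.

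First suppose $(H,f)$ is transitive, that is, $End(H,f)=\mathbb{C}I$. Applying the algebra isomorphism $\Phi_v^+$ and using $\Phi_v^+(I)=I$, we obtain $End(\Phi_v^+(H,f))=\Phi_v^+(\mathbb{C}I)=\mathbb{C}\,\Phi_v^+(I)=\mathbb{C}I$. Hence $\Phi_v^+(H,f)$ is transitive. Next suppose $(H,f)$ is indecomposable. By the equivalence recalled earlier (Proposition~3.1 of \cite{EW3}), indecomposability of a Hilbert representation of a finite quiver is equivalent to its only idempotent endomorphisms being $0$ and $I$; thus $Idem(H,f)=\{0,I\}$. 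A $\mathbb{C}$-algebra isomorphism preserves the relation $T^2=T$ and is a bijection, so it carries the set of idempotents of $End(H,f)$ bijectively onto the set of idempotents of $End(\Phi_v^+(H,f))$. Therefore $Idem(\Phi_v^+(H,f))=\{\Phi_v^+(0),\Phi_v^+(I)\}=\{0,I\}$, and, since $\sigma_v^+(\Gamma)$ is again a finite quiver, the same Proposition applied to $\Phi_v^+(H,f)$ shows that $\Phi_v^+(H,f)$ is indecomposable.

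There is essentially no obstacle: the entire content is packaged in Theorem~\ref{thm phi+}, and what remains is the purely formal observation that an algebra isomorphism sends $\mathbb{C}I$ to $\mathbb{C}I$ and idempotents bijectively to idempotents. The only points requiring a line of verification are that $\Phi_v^+$ is unital and that the target is again a Hilbert representation of a finite quiver, so that the indecomposability criterion applies to it as well.
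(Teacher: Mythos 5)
Your proposal is correct and is essentially the argument the paper intends: the corollary is stated without proof as an immediate consequence of Theorem~\ref{thm phi+}, and your verification that the unital algebra isomorphism $\Phi_v^+$ carries $\mathbb{C}I$ to $\mathbb{C}I$ and idempotents bijectively to idempotents (combined with the idempotent criterion for indecomposability) is exactly the routine deduction being left to the reader. The only micro-detail you could add is that fullness at $v$ together with $(H,f)\neq 0$ forces $\Phi_v^+(H,f)\neq 0$ (some $K_u=H_u\neq 0$ for $u\neq v$), so the indecomposability criterion genuinely applies to the image.
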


The following theorem is well known for finite-dimensional Hilbert spaces
(\cite[page289,5.7.Corollary]{As} and \cite[page16,Proposition 2.1]{DR}).
\begin{thm}
Let $\Gamma=(V,E,s,r)$ be a finite quiver and 
 $v \in V$ a source. If  a Hilbert representation $(H,f)$ of 
$\Gamma$ is co-full at $v$, then  
$\Phi_{v}^{-}:End(H,f)\to End(\Phi_{v}^{-}(H,f))$
 is an isomorphism
as  $\mathbb{C}$-algebras.
\label{thm phi-}
\end{thm}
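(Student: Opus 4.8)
The plan is to reduce this theorem to the sink case, Theorem \ref{thm phi+}, by conjugating with the contravariant functor $\Phi^*$ and passing to the opposite quiver $\overline{\Gamma}$, rather than repeating the analytic (open mapping / Banach isomorphism) argument for a source.

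First I would translate the hypotheses across $\Phi^*$. Since $v$ is a source of $\Gamma$, it is a sink of $\overline{\Gamma}$, and the arrows of $\overline{\Gamma}$ with range $v$ are exactly the $\overline{\alpha}$ with $\alpha \in E_v$. Writing $(K,g) = \Phi^*(H,f)$ we have $g_{\overline{\alpha}} = f_{\alpha}^*$, so $\sum_{\alpha \in E_v} \Im g_{\overline{\alpha}} = \sum_{\alpha \in E_v} \Im f_{\alpha}^*$; hence $(H,f)$ is co-full at the source $v$ of $\Gamma$ if and only if $\Phi^*(H,f)$ is full at the sink $v$ of $\overline{\Gamma}$. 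Therefore the co-fullness hypothesis lets us apply Theorem \ref{thm phi+} to $\Phi^*(H,f)$ at $v$, yielding a $\mathbb{C}$-algebra isomorphism $\Phi_v^+ : End(\Phi^*(H,f)) \to End(\Phi_v^+(\Phi^*(H,f)))$.

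Next I would examine $\Phi^*$ on endomorphism algebras: for any Hilbert representation $(L,h)$, the assignment $T \mapsto \Phi^*(T) = (T_u^*)_{u \in V}$ is a bijection of $End(L,h)$ onto $End(\Phi^*(L,h))$ with inverse $S \mapsto (S_u^*)_u$ (using that $\Phi^* \circ \Phi^*$ is the identity), it is conjugate-linear, and it is anti-multiplicative since $(T \circ S)_u^* = S_u^* \circ T_u^*$. Consequently the composite $\Phi^* \circ \Phi_v^+ \circ \Phi^*$, regarded as a map of endomorphism algebras, is $\mathbb{C}$-linear (a conjugate-linear map composed with itself, the $\mathbb{C}$-linear $\Phi_v^+$ in the middle), multiplicative (an anti-multiplicative map composed with itself, the multiplicative $\Phi_v^+$ in the middle), and bijective (a composite of bijections). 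By Proposition \ref{prop:+=*-*}(1) and (2), this composite is precisely $\Phi_v^-$ evaluated on $(H,f)$ and on its endomorphisms, so $\Phi_v^- : End(H,f) \to End(\Phi_v^-(H,f))$ is an isomorphism of $\mathbb{C}$-algebras, which is the assertion.

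The only delicate point is the bookkeeping: one must verify that the two ``defects'' of $\Phi^*$ on $End$, namely conjugate-linearity and reversal of products, each cancel when $\Phi^*$ is applied twice, and that co-fullness of $(H,f)$ at $v$ is exactly fullness of $\Phi^*(H,f)$ at $v$, so that Theorem \ref{thm phi+} applies. There is no new analytic content; the substantive boundedness step already lives inside Theorem \ref{thm phi+} and is transported here through $\Phi^*$. If a self-contained argument is preferred, one can instead mimic the proof of Theorem \ref{thm phi+} verbatim, using $K_v = (\Im \hat{h}_v)^{\perp} = \Ker \hat{h}_v^*$ in place of $\Ker h_v$ and invoking co-fullness wherever fullness was used (in particular to apply the Banach isomorphism theorem to $\hat{h}_v^*$ restricted to $(\Ker \hat{h}_v^*)^{\perp}$); the duality route above is cleaner.
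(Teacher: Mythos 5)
Your proposal is correct, but it takes a genuinely different route from the paper. The paper proves Theorem \ref{thm phi-} directly, repeating the whole analytic argument of Theorem \ref{thm phi+} in adjoint form: injectivity from co-fullness, construction of $W_v=T_v^*$ on $\sum_{\alpha\in E_v}f_\alpha^*(x_\alpha)$, well-definedness via $K_v=\Ker\hat{h}_v^*$, boundedness by applying the Banach inverse mapping theorem to $\hat{h}_v^*\vert_{(K_v)^\perp}$, and the verification that $\Phi_v^-(T)=S$. You instead conjugate by the contravariant functor $\Phi^*$ and invoke Proposition \ref{prop:+=*-*}. All the ingredients you need check out: co-fullness of $(H,f)$ at the source $v$ is literally fullness of $\Phi^*(H,f)$ at the sink $v$ of $\overline{\Gamma}$ (since $g_{\overline{\alpha}}=f_\alpha^*$ and $E^v_{\overline{\Gamma}}=\overline{E_v}$); $T\mapsto(T_u^*)_u$ is a conjugate-linear, anti-multiplicative bijection $End(L,h)\to End(\Phi^*(L,h))$ with $\Phi^*\circ\Phi^*=\mathrm{id}$; and the two defects cancel under double composition, so $\Phi^*\circ\Phi_v^+\circ\Phi^*$ is a $\mathbb{C}$-algebra isomorphism, which Proposition \ref{prop:+=*-*}(2) identifies with $\Phi_v^-$ on endomorphisms. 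Your approach buys economy and makes the sink/source symmetry explicit (the paper itself uses exactly this trick in Lemma \ref{lemma iso-co-full} to reduce co-fullness to fullness, so it is consistent with the authors' toolkit); the paper's direct proof buys a self-contained argument that does not depend on the functorial identities of Proposition \ref{prop:+=*-*} being verified at the level of morphisms. Your closing remark that one could alternatively mimic the sink proof verbatim with $\hat{h}_v^*$ in place of $h_v$ is, in effect, a description of what the paper actually does.
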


\begin{proof}
We put $(K,g):=\Phi_{v}^{-}(H,f)$.
The mapping $\Phi_{v}^{-}$ gives a mapping of 
$End(H,f)$ to $End(K,g)$.
At first we shall show that 
$\Phi_{v}^{-}$ is one to one.
Assume that $S:=\Phi_{v}^{-}(T)=0$ for $T\in End(H,f)$.
We shall show that $T_{v}=0$.
Since $T\in End(H,f)$,
$$f_{\alpha}T_{v}=T_{r(\alpha)}f_{\alpha}\text{ for }\alpha\in E_{v}.$$
For $(x_{\alpha})_{\alpha\in E_{v}}\in \oplus_{\alpha\in E_{v}} H_{r(\alpha)}$, we have
$$T_{v}^{*}(\sum f_{\alpha}^{*}(x_{\alpha}))=\sum f_{\alpha}^{*}
(T_{r(\alpha)}^{*}
(x_{\alpha}))=
\sum f_{\alpha}^{*}
(S_{r(\alpha)}^{*}
(x_{\alpha}))=0 .$$
Since $(H,f)$ is co-full at $v$,
$T_{v}^{*}=0$.
Hence
$T_{v}=0$.
Thus $\Phi_{v}^{-}$ is one to one.
Next 
we shall show that $\Phi_{v}^{-}$ is onto.
We put $T_{u}=S_{u}$ for $u\ne v$.
And we shall define an operator $W_{v}:H_{v}\to H_{v}$
such that for $(x_{\alpha})_{\alpha\in E_{v}}\in \oplus _{\alpha\in E_{v}}H_{r(\alpha)}$,
$$W_{v}(\sum_{\alpha\in E_{v}}
f_{\alpha}^{*}(x_{\alpha}))
=\sum_{\alpha\in E_{v}}
f_{\alpha}^{*}(T_{r(\alpha)}^{*}
(x_{\alpha})).$$
We need to show that
$W_{v}$ is well defined.
Assume that  there exists an element 
 $(x_{\alpha}^{\prime})_{\alpha\in E_{v}}\in \oplus _{\alpha\in E_{v}}H_{r(\alpha)}$
such that
$$\sum_{\alpha\in E_{v}}f_{\alpha}^{*}(x_{\alpha})
=
\sum_{\alpha\in E_{v}}
f_{\alpha}^{*}(x_{\alpha}^{\prime}).$$
We have
$$\hat{h_{v}}^*((x_{\alpha}-x_{\alpha}^{\prime}))
=
\sum_{\alpha\in E_{v}}f_{\alpha}^{*}(x_{\alpha}-x_{\alpha}^{\prime})=0.$$
Hence
$(x_{\alpha}-x_{\alpha}^{\prime})_{\alpha\in E_{v}}\in \ker \hat{h_{v}}^*
=K_{v}.$
Since $S_{v}^{*}:K_{v}\to K_{v}$,
we have 
$S_{v}^{*}((x_{\alpha}-x_{\alpha}^{\prime})_{\alpha\in E_{v}})\in K_{v}
.$
Hence $\hat{h_{v}}^*(
S_{v}^{*}((x_{\alpha}-x_{\alpha}^{\prime})_{\alpha\in E_{v}}))=0$.
Since $S\in End(K,g)$,
we have
$$S_{v}g_{\bar{\beta}}=g_{\bar{\beta}}S_{r(\beta)}
\text{ and }
g_{\bar{\beta}}^{*}S_{v}^{*}=
S_{r(\beta)}^{*}g_{\bar{\beta}}^{*}.$$
Hence
$$
g_{\bar{\beta}}^{*}S_{v}^{*}
((x_{\alpha}-x_{\alpha}^{\prime})_{\alpha\in E_{v}})
=
P_{r(\beta)}i_{v}S_{v}^{*}
((x_{\alpha}-x_{\alpha}^{\prime})_{\alpha\in E_{v}})
,$$

$$S_{r(\beta)}^{*}g_{\bar{\beta}}^{*}
((x_{\alpha}-x_{\alpha}^{\prime})_{\alpha\in E_{v}})
=
S_{r(\beta)}^{*}
(x_{\beta}-x_{\beta}^{\prime})
.$$
Thus we have 
$$P_{r(\beta)}i_{v}S_{v}^{*}
((x_{\alpha}-x_{\alpha}^{\prime})_{\alpha\in E_{v}})
=
S_{r(\beta)}^{*}
(x_{\beta}-x_{\beta}^{\prime})
$$
and
$$\sum f_{\beta}^{*}
P_{r(\beta)}i_{v}
(S_{v}^{*}
((x_{\alpha}-x_{\alpha}^{\prime})_{\alpha\in E_{v}}))
=
\sum f_{\beta}^{*}S_{r(\beta)}^{*}
(x_{\beta}-x_{\beta}^{\prime})
.$$
Since
$$\sum f_{\beta}^{*}
P_{r(\beta)}i_{v}
(S_{v}^{*}
((x_{\alpha}-x_{\alpha}^{\prime})_{\alpha\in E_{v}}))
=\hat{h_{v}}^*(S_{v}^{*}
((x_{\beta}-x_{\beta}^{\prime})_{\beta\in E_{v}}))=0,$$
$$\sum f_{\beta}^{*}S_{r(\beta)}^{*}
(x_{\beta}-x_{\beta}^{\prime})
=
\sum_{\beta\in E_{v}} f_{\beta}^{*}T_{r(\beta)}^{*}
(x_{\beta}-x_{\beta}^{\prime})
=0.$$
Hence
$$\sum f_{\beta}^{*}T_{r(\beta)}^{*}
(x_{\beta})=
\sum f_{\beta}^{*}T_{r(\beta)}^{*}
(x_{\beta}^{\prime}).$$
Thus $W_{v}$ is well defined.
Put $T_{v}=W_{v}^{*}$.
Next we shall show that 
$$f_{\alpha}T_{v}=T_{s(\alpha)}f_{\alpha}
\text{ and }
T_{v}^{*}f_{\alpha}^{*}=f_{\alpha}^{*}T_{s(\alpha)}^{*}.$$
Take and fix $x\in H_{r(\alpha)}$.
For $\beta\in E_{v}$,
we put
$$
x_{\beta}= 
\begin{cases} x&  \ \ (\beta=\alpha),
 \\
  0 &  \ \  (\beta\ne \alpha).
\end{cases}
$$
By the definition of $W_{v}=T_{v}^{*}$,
$$W_{v}(\sum_{\alpha\in E^{v}}
f_{\alpha}^{*}(x_{\alpha}))
=\sum_{\alpha\in E^{v}}
f_{\alpha}^{*}(T_{r(\alpha)}^{*}
(x_{\alpha})).$$
Hence 
$$T_{v}^{*}f_{\alpha}^{*}(x)
=\sum_{\beta}f_{\beta}^{*}T_{r(\beta)}^{*}(x_{\beta})
=
f_{\alpha}^{*}T_{r(\alpha)}^{*}(x)\text{ for }x\in H_{r(\alpha)}.$$
Thus we proved it.
Next we shall show that $W_{v}=T_{v}^{*}:H_{v}\to H_{v}$
is bounded.
By Banach invertibility theorem,
$\hat{h}_{v}^*\vert_{(K_{v})^{\perp}}:
(K_{v})^{\perp}\to H_v$
is a bounded invertible operator.
We shall show that there exists a positive constant $c$
such that 
$$\parallel T_{v}^{*}x\parallel\leqq c\parallel
x\parallel
\text{ for  any } x\in H_{v}.$$
For $x\in H_{v}$,
there exists $(x_{\alpha})_{\alpha\in E_{v}}\in (K_{v})^{\perp}$
such that
$x=\hat{h}_{v}^*((x_{\alpha})_{\alpha\in E_{v}})=
\sum_{\alpha\in E_{v}}
f_{\alpha}^{*}(x_{\alpha})
.$
We have
\begin{align*}
&\parallel T_{v}^{*}(x)\parallel
=\parallel \sum_{\alpha\in E_{v}}T_{v}^{*}(
f_{\alpha}^{*}(x_{\alpha}))\parallel 
=\parallel
\sum_{\alpha\in E_{v}}
f_{\alpha}^{*}(T_{r(\alpha)}^{*}(x_{\alpha}))\parallel\\
\\
&=\parallel(f_{\alpha}^{*}T_{r(\alpha)}^{*})_{\alpha\in E_{v}}
(x_{\alpha})_{\alpha\in E_{v}}
\parallel 
\leqq\parallel
(f_{\alpha}^{*}T_{r(\alpha)}^{*})_{\alpha\in E_{v}}\parallel
\parallel
(x_{\alpha})_{\alpha\in E_{v}}
\parallel \\
&=\parallel
(f_{\alpha}^{*}T_{r(\alpha)}^{*})_{\alpha\in E_{v}}
\parallel\parallel (\hat{h}_{v}^*\vert_{K_{v}
^{\perp}})^{-1}\parallel
\parallel 
x
\parallel
\leqq
c\parallel
x\parallel
\\
\end{align*}
where 
$(f_{\alpha}^{*}T_{r(\alpha)}^{*})_{\alpha\in E_{v}}$
is a row matrix
and
$c:=\parallel
((f_{\alpha}^{*}T_{r(\alpha)}^{*}))_{\alpha\in E_{v}}
\parallel\parallel 
(\hat{h}_{v}^*\vert_{K_{v}
^{\perp}})^{-1}
\parallel.$
Hence  $T_{v}$ is bounded.
Next we shall show that
$\Phi_{v}^{-}(T)=S.$
By the definition of 
$\Phi_{v}^{-}(T)$,
$(\Phi_{v}^{-}(T))_{u}=S_{u}=T_{u}$
for $u(\ne v)\in V$.
Since $S\in End(K,g)$,
we have
$$S_{v}Q_{v}j_{\beta}=S_{v}g_{\bar{\beta}}
=g_{\bar{\beta}}S_{r(\beta)}=Q_{v}j_{\beta}S_{r(\beta)}
\text{ for }\beta\in E_{v}
.$$
For $(x_{\beta})_{\beta\in E_{v}}\in K_{v}$,we have
\begin{align*}
&S_{v}((x_{\beta})_{\beta\in E_{v}})
=S_{v}Q_{v}(\sum_{\beta\in E_{v}} j_{\beta} (x_{\beta}))
=\sum_{\beta\in E_{v}}S_{v}Q_{v} j_{\beta} (x_{\beta})
\\
&=\sum_{\beta\in E_{v}}
Q_{v}j_{\beta}
(S_{r(\beta)}x_{\beta})
=
Q_{v}
\sum_{\beta\in E_{v}}j_{\beta}(S_{r(\beta)}x_{\beta})
=Q_{v}((S_{r(\beta)}x_{\beta})_{\beta\in E_{v}}).
\end{align*}
Thus 
$$
S_{v}((x_{\beta})_{\beta\in E_{v}})
=Q_{v}((S_{r(\beta)}x_{\beta})_{\beta\in E_{v}}).
$$
For $u=v$ and
$((x_{\alpha})_{\alpha\in E_{v}})\in K_{v}$,
\begin{align*}
(\Phi_{v}^{-}(T))_{v}((x_{\alpha})_{\alpha\in E_{v}})
&=Q_{v}((T_{r(\alpha)}x_{\alpha})_{\alpha\in E_{v}}))
=Q_{v}((S_{r(\alpha)}x_{\alpha})_{\alpha\in E_{v}}))\\
&=S_{v}((x_{\alpha})_{\alpha\in E_{v}}).
\end{align*}
Thus 
$(\Phi_{v}^{-}(T))_{v}=S_{v}.$
Hence $\Phi_{v}^{-}(T)=S.$
Hence
 $\Phi_{v}^{-}:End(H,f)\to  End(\Phi_{v}^{-}(H,f))$
is onto.
Thus we have 
$End(H,f)\cong  End(\Phi_{v}^{-}(H,f))$
as $\mathbb{C}$-algebras.
\end{proof}
\begin{cor}
Let $\Gamma=(V,E,s,r)$ be a finite quiver and 
 $v \in V$ a source. Assume that  a Hilbert representation $(H,f)$ of 
$\Gamma$ is co-full at $v$.
If   
 $(H,f)$ is transitive (resp.indecomposable),
then $\Phi_{v}^{-}(H,f)$ is transitive (resp.indecomposable).
\end{cor}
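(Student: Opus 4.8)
The plan is to read the corollary off Theorem~\ref{thm phi-}: since $(H,f)$ is co-full at $v$, that theorem provides a $\mathbb{C}$-algebra isomorphism $\Phi_{v}^{-} : End(H,f) \to End(\Phi_{v}^{-}(H,f))$, and the whole point is that both transitivity and indecomposability are properties of the endomorphism algebra preserved under algebra isomorphism. So the first step I would carry out is to record that $\Phi_{v}^{-}$ sends the identity endomorphism $I$ to the identity endomorphism. This is automatic, because a bijective homomorphism between the unital algebras $End(H,f)$ and $End(\Phi_{v}^{-}(H,f))$ must carry unit to unit; alternatively it can be checked directly from the defining formula $S_{v}((x_{\alpha})_{\alpha\in E_{v}}) = Q_{v}'((T_{r(\alpha)}x_{\alpha})_{\alpha\in E_{v}})$, which for $T=I$ collapses to $S_{v}((x_{\alpha})) = Q_{v}((x_{\alpha})) = (x_{\alpha})$, since $(x_{\alpha})_{\alpha\in E_{v}}$ already lies in $K_{v}$ and $Q_{v}$ is the projection onto $K_{v}$.

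Next, for the transitive case I would argue as follows: if $End(H,f) = \mathbb{C}I$, then applying the $\mathbb{C}$-linear isomorphism $\Phi_{v}^{-}$ gives $End(\Phi_{v}^{-}(H,f)) = \Phi_{v}^{-}(\mathbb{C}I) = \mathbb{C}\,\Phi_{v}^{-}(I) = \mathbb{C}I$, so $\Phi_{v}^{-}(H,f)$ is transitive. For the indecomposable case I would invoke the characterization that $(H,f)$ is indecomposable if and only if $Idem(H,f) = \{0,I\}$ (the proposition \cite[Proposition 3.1.]{EW3} quoted above): an algebra isomorphism carries idempotents bijectively to idempotents and, by the first step, fixes $0$ and $I$, so $Idem(\Phi_{v}^{-}(H,f)) = \Phi_{v}^{-}(Idem(H,f)) = \{0,I\}$, and the same proposition then yields that $\Phi_{v}^{-}(H,f)$ is indecomposable.

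I do not expect any real obstacle here: the corollary is an immediate consequence of Theorem~\ref{thm phi-}, and the only point that even calls for a line of justification is the unitality of $\Phi_{v}^{-}$ on endomorphism algebras, which is routine. One could instead deduce the source statement from the sink statement recorded after Theorem~\ref{thm phi+} by using the identity $\Phi_{v}^{-} = \Phi^{*}\circ\Phi_{v}^{+}\circ\Phi^{*}$ of Proposition~\ref{prop:+=*-*} together with the fact that $\Phi^{*}$ acts as $T_{u}\mapsto T_{u}^{*}$ on endomorphisms and hence preserves the conditions $End(H,f) = \mathbb{C}I$ and $Idem(H,f) = \{0,I\}$; but the direct route above is shorter.
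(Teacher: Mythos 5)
Your proposal is correct and matches the paper's intent: the paper states this corollary without proof, treating it as an immediate consequence of Theorem~\ref{thm phi-}, and the argument you spell out (the unital $\mathbb{C}$-algebra isomorphism $End(H,f)\cong End(\Phi_{v}^{-}(H,f))$ preserves the conditions $End=\mathbb{C}I$ and $Idem=\{0,I\}$) is exactly the intended one. The only detail you leave implicit is that $\Phi_{v}^{-}(H,f)\neq 0$, but this follows automatically since a zero representation would have zero endomorphism algebra, which cannot be isomorphic to $End(H,f)\supseteq\mathbb{C}I\neq\{0\}$.
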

Next, we shall show the existence of infinite-dimensional transitive Hilbert
representations of quivers with any orientation  whose  
underlying  undirected graphs  are extended Dynkin diagrams 
$\tilde{D_n} \  (n \geq 4), \tilde{E_6},\tilde{E_7}$ and $\tilde{E_8}$.

We recall some definitions and lemmas 
in \cite{EW3}.

\noindent
{\bf Definition.}\cite{EW3}
Let $\Gamma$ be a quiver whose underlying undirected graph 
is Dynkin diagram $A_n$. We count the arrows from the left as 
$\alpha_k : s(\alpha_k) \rightarrow r(\alpha_k), 
\ (k = 1, \dots, n-1)$.    
Let $(H,f)$ be a Hilbert representation of $\Gamma$. We denote 
$f_{\alpha_k}$ briefly by $f_k$. For example, 
\[
\circ_{H_1} \overset{f_1}{\longrightarrow}
 \circ_{H_2} \overset{f_2}\longrightarrow \circ_{H_3} 
\overset{f_3}{\longleftarrow} 
\circ_{H_4} \overset{f_4}\longrightarrow \circ_{H_5} 
\overset{f_5}\longleftarrow \circ_{H_6}
\]
It is said that $(H,f)$ is {\it positive-unitary diagonal} 
if there exist $m \in \mathbb N$ and 
orthogonal decompositions (admitting zero components) 
of Hilbert spaces 
\[
 H_k = \oplus _{i = 1}^m H_{k,i} \ \ \ (k = 1, \dots, n)
\]
and decompositions of operators 
\[
f_k =  \oplus _{i = 1}^m f_{k,i}:  
\oplus _{i = 1}^m H_{s(\alpha_k),i} 
\rightarrow \oplus _{i = 1}^m H_{r(\alpha_k),i} \  
\ \ \ (k = 1, \dots, n)
\]
such that each 
$f_{k,i} : H_{s(\alpha_k),i} \rightarrow H_{r(\alpha_k),i}$ 
is written as  $f_{k,i} = 0$ or 
$f_{k,i} = \lambda _{k,i} u_{k,i}$ for 
some positive scalar $\lambda _{k,i}$ and onto unitary 
$u_{k,i} \in B(H_{s(\alpha_k),i},H_{r(\alpha_k),i})$. 

It is easily seen that if $(H,f)$ is positive-unitary diagonal, 
then $\Phi^*(H,f)$ is also positive-unitary diagonal. 

\begin{lemma}\cite[Lemma 6.4.]{EW3}
Let $\Gamma$ be a quiver whose underlying undirected graph 
is Dynkin diagram $A_n$ and 
$(H,f)$ be a Hilbert representation of $\Gamma$. 
Suppose that $(H,f)$ is positive-unitary diagonal. Then 
$(H,f)$ is closed at any sink of $\Gamma$ and 
co-closed at any source of $\Gamma$. 
\label{lemma:positive-unitary-coslosed} 

\end{lemma}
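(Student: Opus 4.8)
The plan is to isolate a single structural observation and apply it twice. Fix the data witnessing that $(H,f)$ is positive-unitary diagonal: a \emph{finite} integer $m$, orthogonal decompositions $H_k = \bigoplus_{i=1}^m H_{k,i}$, and block decompositions $f_k = \bigoplus_{i=1}^m f_{k,i}$ in which each $f_{k,i}$ is either $0$ or $\lambda_{k,i}u_{k,i}$ for a positive scalar $\lambda_{k,i}$ and an onto unitary $u_{k,i}$. The key remark is that for every arrow $\alpha$ one has $\Im f_\alpha = \bigoplus_{i=1}^m \Im f_{\alpha,i}$ (an honest finite direct sum, since $f_\alpha$ is block diagonal with only finitely many blocks), and each block image $\Im f_{\alpha,i}$ equals $\{0\}$ when $f_{\alpha,i}=0$ and equals all of $H_{r(\alpha),i}$ when $f_{\alpha,i}=\lambda_{\alpha,i}u_{\alpha,i}$ is a nonzero scalar multiple of an onto unitary. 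Hence, writing $A_\alpha := \{\, i ; f_{\alpha,i}\neq 0 \,\}$, we get $\Im f_\alpha = \bigoplus_{i\in A_\alpha} H_{r(\alpha),i}$, which is a closed subspace, being a finite orthogonal direct sum of the closed subspaces $H_{r(\alpha),i}$.

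Next I would treat a sink $v$ of $\Gamma$. Because the underlying undirected graph is the Dynkin diagram $A_n$, every vertex is incident to at most two edges, so $E^v = \{\,\alpha\in E ; r(\alpha)=v\,\}$ is a finite (in fact at most two-element) set. By the remark, $\sum_{\alpha\in E^v}\Im f_\alpha$ is a finite sum of subspaces of the form $\bigoplus_{i\in S}H_{v,i}$ with $S\subseteq\{1,\dots,m\}$, taken with respect to the \emph{one} fixed orthogonal decomposition $H_v=\bigoplus_{i=1}^m H_{v,i}$; such a finite sum is again of that form, namely $\sum_{\alpha\in E^v}\Im f_\alpha = \bigoplus_{i\in \bigcup_{\alpha\in E^v}A_\alpha} H_{v,i}$, with no closure needed. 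Being a finite orthogonal direct sum of closed subspaces, it is closed, so $(H,f)$ is closed at $v$.

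For a source $v$ of $\Gamma$ I would reduce to the sink case via the contravariant functor $\Phi^*$. The vertex $v$ is a sink of the opposite quiver $\overline\Gamma$, and $\Phi^*(H,f)=(K,g)$ has $K_u=H_u$ and $g_{\overline\alpha}=f_\alpha^*$, so $\sum_{\overline\alpha\in\overline{E^v}}\Im g_{\overline\alpha} = \sum_{\alpha\in E_v}\Im f_\alpha^*$; thus $(H,f)$ is co-closed at $v$ exactly when $(K,g)$ is closed at $v$. Since $\Phi^*$ preserves the property of being positive-unitary diagonal (as noted just before the statement) and $\overline\Gamma$ again has underlying graph $A_n$, the sink case applied to $(K,g)$ shows $(K,g)$ is closed at $v$, hence $(H,f)$ is co-closed at $v$.

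The only point needing any care is the elementary fact that a finite sum of subspaces of the form $\bigoplus_{i\in S}H_{v,i}$, $S\subseteq\{1,\dots,m\}$, is closed; this is where the finiteness of $m$ genuinely enters, and it is also what ensures $\Im f_\alpha$ is the direct sum of the block images rather than merely dense in it. Everything else is bookkeeping about the shape of $A_n$ and the definition of $\Phi^*$.
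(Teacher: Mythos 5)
Your proof is correct and complete: the observation that each block image is either $0$ or all of $H_{r(\alpha),i}$, so that $\Im f_\alpha$ and then $\sum_{\alpha\in E^v}\Im f_\alpha$ are finite orthogonal sums of summands of the one fixed decomposition of $H_v$ (hence closed with no closure needed), together with the reduction of the source case to the sink case via $\Phi^*$, is exactly the intended argument. Note that the present paper gives no proof of this lemma at all --- it is quoted from \cite[Lemma 6.4]{EW3} --- so there is nothing in the text to compare against; your write-up supplies the standard argument correctly.
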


\begin{prop}\cite[ Proposition 6.5.]{EW3}
Let $\Gamma$ be a quiver whose underlying undirected graph 
is Dynkin diagram $A_n$ and 
$(H,f)$ be a Hilbert representation of $\Gamma$. 
Let $v$ be a source of $\Gamma$. 
Suppose that $(H,f)$ is positive-unitary diagonal. 
Then $\Phi_v^{-}(H,f)$ is also positive-unitary diagonal. 
\label{prop:positive-unitary-iteration} 

\end{prop}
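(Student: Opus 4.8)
The plan is to reduce to the reflection functor $\Phi_v^{+}$ at a sink and then argue index by index in the given decomposition. By Proposition~\ref{prop:+=*-*}, since $v$ is a source of $\Gamma$ it is a sink of the opposite quiver $\overline{\Gamma}$, whose underlying undirected graph is again $A_n$, and $\Phi_v^{-}(H,f)=\Phi^{*}(\Phi_v^{+}(\Phi^{*}(H,f)))$. Since $\Phi^{*}$ preserves the positive-unitary diagonal property (noted in the text just after its definition), $\Phi^{*}(H,f)$ is positive-unitary diagonal, and it suffices to prove the following: if $(H,f)$ is a positive-unitary diagonal representation of a quiver with underlying graph $A_n$ and $v$ is a sink, then $\Phi_v^{+}(H,f)$ is positive-unitary diagonal.

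Fix orthogonal decompositions $H_k=\bigoplus_{i=1}^{m}H_{k,i}$ and $f_k=\bigoplus_{i=1}^{m}f_{k,i}$, with each $f_{k,i}$ equal to $0$ or to $\lambda_{k,i}u_{k,i}$ for a positive scalar $\lambda_{k,i}$ and an onto unitary $u_{k,i}$. Since $v$ lies on an $A_n$-quiver, $E^v$ has one or two elements, and the map $h_v\colon\bigoplus_{\alpha\in E^v}H_{s(\alpha)}\to H_v$, $h_v((x_\alpha))=\sum_\alpha f_\alpha(x_\alpha)$, splits as $h_v=\bigoplus_{i=1}^{m}h_{v,i}$ with $h_{v,i}\colon\bigoplus_{\alpha\in E^v}H_{s(\alpha),i}\to H_{v,i}$. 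Hence $K_v=\Ker h_v=\bigoplus_{i=1}^{m}K_{v,i}$ with $K_{v,i}=\Ker h_{v,i}$, and each reflected arrow $g_{\overline{\beta}}=P_\beta\circ i_v$ splits as $\bigoplus_i(g_{\overline{\beta}})_i$ with $(g_{\overline{\beta}})_i\colon K_{v,i}\to H_{s(\beta),i}$. So it is enough to analyse, for each fixed $i$, the local picture with one or two incoming maps into $H_{v,i}$, each $0$ or a positive scalar times an onto unitary.

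Now run the case analysis at a fixed $i$. If $|E^v|=1$ with incoming map $f_{\alpha,i}$: if $f_{\alpha,i}\neq 0$ it is injective, so $K_{v,i}=0$; if $f_{\alpha,i}=0$ then $K_{v,i}=H_{s(\alpha),i}$ and $(g_{\overline{\alpha}})_i$ is the identity. If $|E^v|=2$ with incoming maps $f_{\alpha,i},f_{\beta,i}$: if $f_{\alpha,i}=f_{\beta,i}=0$ then $K_{v,i}=H_{s(\alpha),i}\oplus H_{s(\beta),i}$ and the two reflected arrows are the coordinate projections, so here one splits the index $i$ into two indices, one carrying $H_{s(\alpha),i}$ and a zero component at $s(\beta)$, the other carrying $H_{s(\beta),i}$ and a zero component at $s(\alpha)$, so that each reflected arrow becomes the identity or $0$; if exactly one of $f_{\alpha,i},f_{\beta,i}$ is nonzero, say $f_{\alpha,i}\neq 0$, then $K_{v,i}=0\oplus H_{s(\beta),i}$ and the reflected arrows are $0$ and the identity; if $f_{\alpha,i}=\lambda u$ and $f_{\beta,i}=\mu w$ are both nonzero, then $K_{v,i}=\{(x,-\frac{\lambda}{\mu}w^{*}ux):x\in H_{s(\alpha),i}\}$, on which $\|(x,-\frac{\lambda}{\mu}w^{*}ux)\|^{2}=(1+\lambda^{2}/\mu^{2})\|x\|^{2}$, and the coordinate projections restricted to $K_{v,i}$ are $(1+\lambda^{2}/\mu^{2})^{-1/2}$, respectively $(\lambda/\mu)(1+\lambda^{2}/\mu^{2})^{-1/2}$, times onto unitaries $K_{v,i}\to H_{s(\alpha),i}$, respectively $K_{v,i}\to H_{s(\beta),i}$. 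In every case the reflected arrows at index $i$ are $0$ or a positive scalar times an onto unitary; the only arrows that change are those in $\overline{E^v}$, and the single refinement above only adds zero components at the neighbours of $v$ (and harmlessly at the other vertices). So the new decompositions exhibit $\Phi_v^{+}(H,f)$ as positive-unitary diagonal.

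The main obstacle is the last subcase: identifying $K_{v,i}$ and the restrictions of the coordinate projections to it when both incoming maps are nonzero, and checking that these restrictions are genuinely positive scalars times onto unitaries — in particular their surjectivity, which uses that $w^{*}u$ is an onto unitary. The rest is bookkeeping with zero components, together with the purely formal passage from $\Phi_v^{-}$ to $\Phi_v^{+}$ via $\Phi^{*}$.
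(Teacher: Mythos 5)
Your proposal is correct. Note that this paper does not actually prove Proposition \ref{prop:positive-unitary-iteration}; it is quoted from \cite[Proposition 6.5]{EW3}, so there is no in-paper argument to compare against. Your route --- passing to the opposite quiver via $\Phi^*$, which preserves the positive-unitary diagonal property, and then computing $K_v=\Ker h_v$ fibrewise over the index $i$ --- is essentially the natural computation (working directly with $\Phi_v^-$ one would instead compute $K_v=\Ker \hat{h}_v^*$, which is the same calculation with adjoints), and your case analysis at a degree-one or degree-two vertex of $A_n$ is complete; in particular the key subcase, where both incoming components are $\lambda u$ and $\mu w$, correctly identifies $K_{v,i}$ as the graph of $-(\lambda/\mu)w^*u$ and the restricted coordinate projections as positive multiples of onto unitaries (the sign being absorbed into the unitary).

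The one place where you should add a line is the subcase $f_{\alpha,i}=f_{\beta,i}=0$ with $v$ an interior vertex. There the refinement of the index $i$ into $i'$ and $i''$ cannot just ``add zero components at the neighbours of $v$'': it must be carried out consistently along the entire quiver, by assigning to $i'$ the summands $H_{k,i}$ for all vertices $k$ on the $\alpha$-side of $v$ and the zero space on the $\beta$-side, and symmetrically for $i''$. This is legitimate precisely because removing $v$ disconnects $A_n$ into two paths and the two vanishing components $f_{\alpha,i},f_{\beta,i}$ are the only maps joining the two sides within the $i$-th summand, so every arrow component away from $v$ becomes either the original $f_{k,i}$ or the map $0\to 0$. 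With that said explicitly, the proof is complete.
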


It is known that 
 every orientation of Dynkin 
diagram $A_n$ is obtained by  an iteration of 
$\sigma_v^-$ at sources $v$ except the right end
from a particular orientation as follows:

\begin{lemma}\cite[Lemma 6.6.]{EW3}
Let $\Gamma _0$  and $\Gamma$ be quivers
 whose underlying undirected graphs
are the same Dynkin diagram $A_n$ for $n \geq 2$. Assume that 
$\Gamma _0$ is the following: 
\[
\circ_1 \longrightarrow \circ_2 \longrightarrow \circ_3 \dots 
\circ_{n-1} \longrightarrow \circ_n
 \]
Then there exists a sequence $v_1, \dots, v_m$ of vertices in 
$\Gamma _0$ such that 
\begin{itemize}
\item [(1)] for each $k= 1, \dots, m$, $v_k$ is a source in 
$\sigma^-_{v_{k-1}} \dots \sigma^-_{v_2} \sigma^-_{v_1}(\Gamma _0)$,
\item [(2)] 
$\sigma^-_{v_m} \dots \sigma^-_{v_2} \sigma^-_{v_1}(\Gamma _0) = 
\Gamma$, 
\item[(3)]for each $k= 1, \dots, m$, $v_k \not= n$. 
\end{itemize}
\label{lemma:orientation-change} 
\end{lemma}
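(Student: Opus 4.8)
The plan is to prove this as a purely combinatorial statement about orientations of the path $A_n$, by induction on $n$, using only that the reflection $\sigma_v^-$ at a source $v$ of a path reverses the directions of the (one or two) edges meeting $v$.

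I would first fix notation: encode an orientation of $A_n$ by signs $\varepsilon_i \in \{+,-\}$ for $i = 1, \dots, n-1$, where $\varepsilon_i = +$ means the edge between vertices $i$ and $i+1$ points rightward; thus $\Gamma_0$ is $(+,+,\dots,+)$. A vertex $i$ with $1 < i < n$ is a source precisely when $\varepsilon_{i-1} = -$ and $\varepsilon_i = +$, vertex $1$ is a source precisely when $\varepsilon_1 = +$, and applying $\sigma_i^-$ flips $\varepsilon_{i-1}$ and $\varepsilon_i$ (just $\varepsilon_1$ when $i = 1$). Since $v_k = n$ is forbidden, the sign $\varepsilon_{n-1}$ can be changed only by a reflection at $v = n-1$. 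The base case $n = 2$ is immediate: $\sigma_1^-$ interchanges the two orientations $(+)$ and $(-)$.

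For the inductive step I would let $P'$ be the sub-path on vertices $1, \dots, n-1$ with all-rightward orientation $\Gamma_0'$, and let $\Gamma'$ be the orientation of $P'$ obtained from the target $\Gamma$ by forgetting the last edge. If the last sign $\delta_{n-1}$ of $\Gamma$ is $+$, apply the induction hypothesis to $(P', \Gamma_0', \Gamma')$ to get a valid reflection sequence $v_1, \dots, v_m$ with every $v_k \neq n-1$ (condition (3) of the lemma, applied to $A_{n-1}$); the same sequence, performed on $A_n$, never touches the edge $e_{n-1}$, the source conditions and the evolution of $\varepsilon_1, \dots, \varepsilon_{n-2}$ are unaffected, and it produces $\Gamma$ with all $v_k \neq n$. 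If $\delta_{n-1} = -$, I would first run the \emph{propagation} sequence $\sigma_1^-, \sigma_2^-, \dots, \sigma_{n-2}^-, \sigma_{n-1}^-$: a short induction shows that after $\sigma_i^-$ the sign vector is $(+,\dots,+,-,+,\dots,+)$ with the unique $-$ in slot $i$, so each $i \leq n-2$ really is a source when used, and then $\varepsilon_{n-2} = -$, $\varepsilon_{n-1} = +$ makes $n-1$ a source, and $\sigma_{n-1}^-$ leaves the state $(+,\dots,+,-)$, i.e.\ $\Gamma_0'$ on $P'$ together with $\delta_{n-1} = -$. Then I would finish exactly as in the first case, applying the induction hypothesis to $(P', \Gamma_0', \Gamma')$; those reflections leave $\varepsilon_{n-1}$ alone, so $\Gamma$ is reached using only vertices $\neq n$.

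The delicate part will be the case $\delta_{n-1} = -$: one must check that each vertex in the propagation sequence genuinely is a source at the moment it is reflected, and that neither the propagation sequence nor the subsequent sequence supplied by the induction hypothesis ever re-flips the edge $e_{n-1}$ or requires a reflection at $n$. Everything else is routine bookkeeping, namely that a reflection at a vertex $\leq n-2$ on $A_n$ acts on $\varepsilon_1, \dots, \varepsilon_{n-2}$ exactly as the corresponding reflection on the sub-path $P'$, with identical source conditions; this is also where condition (3) of the lemma is genuinely needed, so that the inductive sequences never disturb the edge joining the sub-path to the vertex $n$.
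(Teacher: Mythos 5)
Your argument is correct: the sign encoding of orientations, the source criterion at endpoints and interior vertices, the effect of $\sigma_v^-$ on the adjacent signs, the base case $n=2$, and both branches of the induction (including the propagation sequence $\sigma_1^-,\dots,\sigma_{n-1}^-$, which moves the unique $-$ rightward until it lands in slot $n-1$) all check out, and condition (3) of the inductive hypothesis is invoked exactly where it is needed, namely to guarantee that the sub-path sequence never disturbs the edge $e_{n-1}$ and never requires a reflection at $n$. Note that the paper itself gives no proof of this lemma --- it is quoted verbatim from \cite{EW3} --- so there is nothing here to compare against line by line; your induction is the standard argument for this statement and constitutes a complete, self-contained proof.
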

\begin{lemma}\cite[Lemma 5.6.]{EW3}
Let $\Gamma=(V,E,s,r)$ be a finite quiver and 
 $v \in V$ a sink. Then for any  Hilbert representation $(H,f)$ of 
$\Gamma$,  $\Phi_v^{+}(H,f)$ is co-full at $v$. 
\label{lemma:co-full}
\end{lemma}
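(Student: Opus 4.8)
The plan is to unwind the definition of co-fullness at the source $v$ of the reflected quiver $\sigma_v^+(\Gamma)$ and to show that the relevant sum of images is forced to be all of $K_v$, using only that $K_v$ is realized as a subspace of the direct sum $\bigoplus_{\alpha \in E^v} H_{s(\alpha)}$ together with the explicit form of the reflected arrows. No fullness or closedness hypothesis on $(H,f)$ should be needed; co-fullness will turn out to be intrinsic to the construction.

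First I would fix notation. Write $W = \bigoplus_{\alpha \in E^v} H_{s(\alpha)}$, let $i_v : K_v \hookrightarrow W$ be the canonical inclusion (recall $K_v = \Ker h_v$ is a closed subspace of $W$), and for each $\beta \in E^v$ let $P_\beta : W \to H_{s(\beta)}$ be the coordinate projection, whose adjoint $P_\beta^* : H_{s(\beta)} \to W$ is the canonical inclusion into the $\beta$-th summand. In $\sigma_v^+(\Gamma)$ the vertex $v$ is a source, and the arrows emanating from $v$ are precisely the reversed arrows $\overline{\beta}$ for $\beta \in E^v$, with $g_{\overline{\beta}} = P_\beta \circ i_v : K_v \to H_{s(\beta)}$. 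Thus co-fullness of $(K,g) = \Phi_v^+(H,f)$ at $v$ is exactly the assertion that
\[
\sum_{\beta \in E^v} \Im\bigl(g_{\overline{\beta}}^{\,*}\bigr) = K_v .
\]

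Next I would compute the adjoints. Since $g_{\overline{\beta}} = P_\beta i_v$, we have $g_{\overline{\beta}}^{\,*} = i_v^* P_\beta^*$; and because $i_v$ is the isometric inclusion of the closed subspace $K_v$, its adjoint $i_v^* : W \to K_v$ is the orthogonal projection of $W$ onto $K_v$. As $E^v$ is finite, every $w \in W$ is the finite sum $w = \sum_{\beta \in E^v} P_\beta^* P_\beta(w)$ of its coordinate components, so $\sum_{\beta \in E^v} \Im(P_\beta^*) = W$. Applying the bounded linear map $i_v^*$ then yields
\[
\sum_{\beta \in E^v} \Im\bigl(g_{\overline{\beta}}^{\,*}\bigr)
= \sum_{\beta \in E^v} i_v^*\bigl(\Im P_\beta^*\bigr)
= i_v^*(W).
\]
Finally, since $i_v^*$ is the orthogonal projection onto $K_v$ it is surjective onto $K_v$ (indeed $i_v^* k = k$ for $k \in K_v$), so $i_v^*(W) = K_v$, which is precisely co-fullness at $v$.

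The only points deserving care are the identification of $i_v^*$ with the orthogonal projection of $W$ onto $K_v$ and the elementary observation $\sum_{\beta} \Im(P_\beta^*) = W$; once these are in hand the conclusion is immediate. I expect no genuine obstacle here, and in particular no convergence or closed-range difficulty arises because all direct sums over $E^v$ are finite. This reflects the structural fact that $K_v$ is cut out inside $W$ and that the reflected arrows are coordinate projections, so their adjoints jointly recover all of $K_v$.
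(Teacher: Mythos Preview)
Your argument is correct. The key observations---that $i_v^*$ is the orthogonal projection of $W$ onto the closed subspace $K_v$, that the coordinate inclusions $P_\beta^*$ jointly span $W$ since $E^v$ is finite, and hence that $\sum_\beta \Im(i_v^* P_\beta^*) = i_v^*(W) = K_v$---are all sound, and together they yield co-fullness at $v$ directly from the definitions.

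Note that the present paper does not supply its own proof of this lemma; it is quoted from \cite[Lemma 5.6]{EW3}. Your computation is essentially the natural unwinding of the construction of $\Phi_v^+$, and is presumably close in spirit to the original argument in that reference.
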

\begin{thm}\cite[Theorem 5.13.]{EW3}
Let $\Gamma=(V,E,s,r)$ be a finite quiver and 
 $v \in V$ a source. Assume that 
 a Hilbert representation $(H,f)$ of $\Gamma$ is indecomposable 
and  co-closed at $v$. Then the following assertions hold:
\begin{itemize}
\item [(1)] 
If $\Phi_v^{-}(H,f) = 0$, then $H_v = \mathbb C$, 
$H_u = 0$ for any $u \in V$ with $u \not= v$ and 
$f_{\alpha} = 0$ for any $\alpha \in E$. 
\item[(2)]
If $\Phi_v^{-}(H,f) \not= 0$, then $\Phi_v^{-}(H,f)$ 
is also indecomposable  and 
$(H,f) \cong \Phi_v^{+}\Phi_v^{-}(H,f)). $
\end{itemize}
\label{thm:preserving-indecomposability} 

\end{thm}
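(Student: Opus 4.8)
The plan is to handle the two assertions separately: the first by a direct decomposition argument, the second by writing $\Phi_v^{+}\Phi_v^{-}(H,f)$ out explicitly, identifying it with $(H,f)$ via a concrete isomorphism, and then transporting indecomposability back through the endomorphism algebra. Throughout write $E_v=\{\alpha\in E;\ s(\alpha)=v\}$ and $\hat h_v\colon H_v\to\bigoplus_{\alpha\in E_v}H_{r(\alpha)}$, $\hat h_v(x)=(f_\alpha(x))_{\alpha\in E_v}$, so that by definition $K_v:=(\Phi_v^{-}(H,f))_v=(\Im\hat h_v)^{\perp}=\Ker\hat h_v^{*}$ and $K_u=H_u$ for $u\neq v$; being co-closed at $v$ means exactly that $\Im\hat h_v^{*}=\sum_{\alpha\in E_v}\Im f_\alpha^{*}$, hence also $\Im\hat h_v$, is closed.

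For (1): if $\Phi_v^{-}(H,f)=0$ then $H_u=K_u=0$ for every $u\neq v$, and since an indecomposable representation is nonzero, $H_v\neq 0$. Every arrow $\alpha$ satisfies either $s(\alpha)=v$, whence $r(\alpha)\neq v$ because $v$ is a source and so $H_{r(\alpha)}=0$ and $f_\alpha=0$, or $s(\alpha)\neq v$, whence $H_{s(\alpha)}=0$ and $f_\alpha=0$. Thus all $f_\alpha=0$, so $End(H,f)\cong B(H_v)$ as $\mathbb C$-algebras, and indecomposability forces $B(H_v)$ to have no idempotents besides $0$ and $I$, i.e. $\dim H_v=1$. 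This yields $H_v=\mathbb C$ and completes (1).

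For (2): first I would show $\hat h_v$ is injective. If $x\in\bigcap_{\alpha\in E_v}\Ker f_\alpha$ with $x\neq 0$, then the representation supported at $v$ on the line $\mathbb C x$ with all structure maps zero is a subrepresentation having a Hilbert-space complement, so $(H,f)$ decomposes; indecomposability then forces $H_v=\mathbb C x$ and $H_u=0$ for $u\neq v$, which makes $\hat h_v=0$ and $\Phi_v^{-}(H,f)=0$, contradicting the hypothesis of (2). Hence $\Ker\hat h_v=0$. Next, unwinding the definitions, $(L,m):=\Phi_v^{+}\Phi_v^{-}(H,f)$ is a representation of $\sigma_v^{+}\sigma_v^{-}(\Gamma)=\Gamma$ with $L_u=H_u$ for $u\neq v$ and $L_v=\Ker Q_v=(K_v)^{\perp}=\overline{\Im\hat h_v}$, where $Q_v$ is the orthogonal projection onto $K_v$; co-closedness at $v$ upgrades this to $L_v=\Im\hat h_v$. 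Tracing the maps, $m_\alpha$ for $\alpha\in E_v$ is the coordinate projection $(x_\beta)_\beta\mapsto x_\alpha$ restricted to $L_v$, while $m_\alpha=f_\alpha$ for $\alpha\notin E_v$. I would then set $\varphi=(\varphi_u)_{u\in V}$ with $\varphi_u=\mathrm{id}_{H_u}$ for $u\neq v$ and $\varphi_v=\hat h_v\colon H_v\to\Im\hat h_v=L_v$; it intertwines every arrow by the explicit descriptions just given, and $\varphi_v$ is a bounded bijection of $H_v$ onto the closed subspace $L_v$, hence a topological isomorphism by the open mapping theorem. Thus $(H,f)\cong\Phi_v^{+}\Phi_v^{-}(H,f)$. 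Finally, a direct computation with the formula $g_{\overline\beta}=Q_v\circ j_\beta$ (or the identity $\Phi_v^{-}(H,f)=\Phi^{*}\Phi_v^{+}\Phi^{*}(H,f)$ of Proposition \ref{prop:+=*-*} together with Lemma \ref{lemma:co-full}) shows $\Phi_v^{-}(H,f)$ is full at the sink $v$; Theorem \ref{thm phi+} then gives a $\mathbb C$-algebra isomorphism $End(\Phi_v^{-}(H,f))\cong End(\Phi_v^{+}\Phi_v^{-}(H,f))\cong End(H,f)$, and since an algebra isomorphism carries idempotents to idempotents and $0,I$ to $0,I$, the triviality of the idempotent endomorphisms of $(H,f)$ passes to $\Phi_v^{-}(H,f)$, which is therefore indecomposable.

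The main obstacle is the infinite-dimensional bookkeeping in (2): one needs $\Im\hat h_v$ to be closed (precisely co-closedness; without it $L_v$ is only the closure $\overline{\Im\hat h_v}$ and $\varphi_v$ fails to be surjective), and one needs $\hat h_v$ injective (which is where indecomposability together with $\Phi_v^{-}(H,f)\neq 0$ enters, ruling out a summand living entirely at $v$); the open mapping theorem then supplies a bounded $\varphi_v^{-1}$. The remaining work — checking that $\varphi$ commutes with the arrows and that $\Phi_v^{-}(H,f)$ is full at $v$ — is a routine diagram chase with the formulas for $\Phi_v^{\pm}$.
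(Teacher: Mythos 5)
Your proof is correct. Note that the paper itself does not prove this statement — it is quoted verbatim from \cite[Theorem 5.13]{EW3} — so there is no in-paper argument to compare against; your reconstruction (splitting off the summand supported on $\bigcap_{\alpha\in E_v}\Ker f_\alpha$ at the source to get injectivity of $\hat h_v$, using co-closedness to identify $\Phi_v^{+}\Phi_v^{-}(H,f)_v$ with $\Im\hat h_v$ and $\hat h_v$ itself as the isomorphism via the open mapping theorem, and transporting indecomposability through the $End$-isomorphism of Theorem \ref{thm phi+} after checking fullness of $\Phi_v^{-}(H,f)$ at the sink $v$) is the natural route and matches the strategy of the cited source.
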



The following is one of the main theorem in this paper.

\begin{thm}
Let $\Gamma$ be a quiver whose underlying undirected graph is an extended Dynkin diagram.
Then there exists an infinite-dimensional transitive Hilbert representation of $\Gamma$ if and only if $\Gamma$ is not an oriented cyclic quiver. 
\end{thm}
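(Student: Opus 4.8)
The plan is to combine the $\widetilde{A_n}$ result already in hand with the explicit base constructions for the remaining extended Dynkin graphs and the reflection functor machinery. First I would dispose of the case that $|\Gamma| = \widetilde{A_n}$ by quoting Theorem \ref{thm:An-transitive} verbatim. This reduces the problem to the graphs $\widetilde{D_n}\ (n \geq 4)$, $\widetilde{E_6}$, $\widetilde{E_7}$, $\widetilde{E_8}$; since each of these has a vertex of degree $3$, none of their orientations is an oriented cyclic quiver, so for these graphs the only-if direction is vacuous and it suffices to produce, for every orientation $\Gamma$, an infinite-dimensional transitive Hilbert representation.

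For the base orientation I would take $\Gamma_0$ to be the quiver of Lemma \ref{lemma:Dn}, \ref{lemma:E6}, \ref{lemma:E7} or \ref{lemma:E8}, and $(H_0,f_0)$ the transitive representation constructed there from a transitive operator $S$ on $\ell^2(\mathbb{N})$ (for instance $S = BA^{-1}$ as in Proposition \ref{thm:Kron-example} or Proposition \ref{thm:Kro-example}). I would arrange $(H_0,f_0)$ so that its restriction to each linear (type $A$) branch is positive-unitary diagonal; by Lemma \ref{lemma:positive-unitary-coslosed} this makes $(H_0,f_0)$ closed at every sink and co-closed at every source, a property that propagates along reflection functors by Proposition \ref{prop:positive-unitary-iteration}, Proposition \ref{prop:+=*-*} and the stability of positive-unitary-diagonality under $\Phi^*$.

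Next I would change the orientation by reflection functors. Because $|\Gamma|$ is a tree, there is a finite admissible sequence $v_1,\dots,v_m$ — the tree analogue of Lemma \ref{lemma:orientation-change} — with each $v_j$ a sink or a source of the quiver $\Gamma_{j-1}$ obtained after the first $j-1$ reversals, and with $\Gamma_m = \Gamma$. Setting $(H_j,f_j) = \Phi_{v_j}^{\varepsilon_j}(H_{j-1},f_{j-1})$, where $\varepsilon_j = +$ at a sink and $\varepsilon_j = -$ at a source, the corollaries to Theorem \ref{thm phi+} and Theorem \ref{thm phi-} give $\mathrm{End}(H_j,f_j) \cong \mathrm{End}(H_{j-1},f_{j-1})$, hence transitivity of $(H,f) := (H_m,f_m)$, provided the fullness (resp. co-fullness) hypothesis holds at $v_j$ at each step; this I would verify using the positive-unitary-diagonal structure of the branches together with the explicit form of $(H_0,f_0)$ at the branch vertex, and inductively using Lemma \ref{lemma:co-full}, which makes $\Phi_{v_j}^+(H_{j-1},f_{j-1})$ co-full at $v_j$. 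Finally, to see that $(H,f)$ is infinite-dimensional I would use that every $\Phi_v^{\pm}$ sends finite-dimensional representations to finite-dimensional ones and that on the relevant (co-)closed subcategory $\Phi_v^+$ and $\Phi_v^-$ are mutually inverse up to isomorphism (cf. Theorem \ref{thm:preserving-indecomposability}); applying the reversed sequence of functors to the transitive, hence indecomposable, representation $(H,f)$ recovers $(H_0,f_0)$ up to isomorphism, so if $(H,f)$ were finite-dimensional so would be $(H_0,f_0)$, which it is not. The only-if direction is then immediate: an oriented cyclic quiver has underlying graph $\widetilde{A_n}$, and by Lemma \ref{lemma:cyclictransitive} its transitive representations all have components $\mathbb{C}$ or $0$, hence are finite-dimensional.

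The main obstacle, I expect, is precisely the orientation-change step: keeping the fullness and co-fullness hypotheses of Theorems \ref{thm phi+} and \ref{thm phi-} in force as the orientation is changed, since these are strictly stronger than the closedness conditions that sufficed for the indecomposability arguments of \cite{EW3}. This forces a careful choice of the admissible reflection sequence — reflecting at sinks with $\Phi^+$ whenever possible so that Lemma \ref{lemma:co-full} supplies the co-fullness needed for a subsequent $\Phi^-$ — and a branch-by-branch check that the positive-unitary-diagonal form, and with it the required fullness, survives each reflection. Everything else (the base cases, the transitivity of $S$, the $\widetilde{A_n}$ case, and the behaviour of dimensions under $\Phi_v^{\pm}$) is either already available or routine.
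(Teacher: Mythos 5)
Your overall strategy --- dispose of $\widetilde{A_n}$ by Theorem \ref{thm:An-transitive}, start from the explicit transitive representations of Lemmas \ref{lemma:Dn}--\ref{lemma:E8}, and transport transitivity to an arbitrary orientation with reflection functors via Theorems \ref{thm phi+} and \ref{thm phi-} --- is exactly the paper's, and you correctly identify the crux: the fullness/co-fullness hypotheses of those theorems are strictly stronger than the (co-)closedness that positive-unitary diagonality provides. But your proposed resolution of that crux does not work as stated, and this is a genuine gap. First, positive-unitary diagonality of the wings plus ``the explicit form of $(H_0,f_0)$ at the branch vertex'' yields only co-closedness (Lemma \ref{lemma:positive-unitary-coslosed}), never co-fullness, so the direct verification you sketch cannot succeed. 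Second, your heuristic of ``reflecting at sinks with $\Phi^+$ whenever possible so that Lemma \ref{lemma:co-full} supplies the co-fullness needed for a subsequent $\Phi^-$'' misuses that lemma: $\Phi_v^+$ makes its image co-full only at the vertex $v$ itself, where the only available $\Phi^-$ is $\Phi_v^-$, which merely undoes the reflection; it gives nothing at the other sources where you actually need co-fullness.

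The paper's resolution has two ingredients you do not assemble. (i) It never uses $\Phi^+$ during the orientation change: Lemma \ref{lemma:orientation-change}, applied locally to each type-$A$ wing while avoiding the branch vertex, produces a sequence consisting of \emph{source} reflections only (and for $\widetilde{D_n}$ even this is unnecessary on the central path, where identities are placed so that the orientation there is irrelevant and the four wing arrows are single inclusions, directly co-full). (ii) Co-fullness at each successive source $v$ is obtained by a bootstrap, not by inspection: the current representation $(K,g)$ is indecomposable and co-closed at $v$ (Proposition \ref{prop:positive-unitary-iteration}), so Theorem \ref{thm:preserving-indecomposability}(2) gives $(K,g)\cong \Phi_v^+\Phi_v^-(K,g)$; the right-hand side is co-full at $v$ by Lemma \ref{lemma:co-full}, and co-fullness is an isomorphism invariant by Lemma \ref{lemma iso-co-full}, so $(K,g)$ is co-full at $v$ and Theorem \ref{thm phi-} applies. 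You cite both Theorem \ref{thm:preserving-indecomposability} and Lemma \ref{lemma:co-full}, but only for the dimension count and the misplaced heuristic respectively; the chain that actually delivers co-fullness is missing. (Your infinite-dimensionality argument, by contrast, is fine, though the paper's observation that the Hilbert space at the branch vertex is never touched by the reflections is simpler.)
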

\begin{proof}
Suppose that $\Gamma$ is an oriented cyclic quiver.
Theorem \ref{thm:An-transitive} proves the nonexistence of infinite-dimensional transitive Hilbert representation of $\Gamma$.
Suppose that $\Gamma$ is not an oriented cyclic quiver.
We shall prove the existence of infinite-dimensional 
transitive Hilbert representations of $\Gamma$.
When $\widetilde{A_{n}}$ case, theorem
\ref{thm:An-transitive} proves the existence 
of infinite-dimensional 
transitive Hilbert representations of $\Gamma$.
Next we consider the case that the $|\Gamma|$ is $\tilde{D_n}$.  
Let $\Gamma _0$ 
be the quiver of Lemma \ref{lemma:Dn}  and $(H^{(0)},f^{(0)})$ 
the Hilbert representation constructed there. Then  
$|\Gamma _0| =  |\Gamma| = \tilde{D_n}$, but their orientations 
are different in general. Let $\Gamma _1$ be a  
quiver such that $|\Gamma _1| = \tilde{D_n}$ and the orientation 
is as same as $\Gamma $ on the path between 5 and n+1 and 
as same as $\Gamma_0$ on the rest four "wings". We shall define a Hilbert 
representation $(H^{(1)},f^{(1)})$ of $\Gamma _1$ 
modifying $(H^{(0)},f^{(0)})$. 
We put
$f^{(1)}_{\beta} = I$ for any arrow $\beta$ in the path between 5 and n+1. 
and $f^{(0)}_{\beta} = f^{(1)}_{\beta} $ for other arrow $\beta$.
The same proof for 
$(H^{(0)},f^{(0)})$ shows that $(H^{(1)},f^{(1)})$ is transitive. 
Since $f^{(1)}_{\alpha_{i}}(i=1,\cdots,4)$ is an inclusion map,
  $(H^{(1)},f^{(1)})$ is co-full at sources 
1,2,3 and 4.
By  the theorem \ref{thm phi-}, 
 a certain iteration of reflection functors at a source 
1,2,3 or 4 on $(H^{(1)},f^{(1)})$ gives an 
infinite-dimensional, transitive, 
 Hilbert representation of $\Gamma$.  
We have proved this case. 

Next we consider the case that the $|\Gamma|$ is $\tilde{E_6}$. 
Let $\Gamma _0$ 
be the quiver of Lemma \ref{lemma:E6}, and  we denote here by
$(H^{(0)},f^{(0)})$ 
the Hilbert representation constructed there. Then  
$|\Gamma _0| =  |\Gamma| = \tilde{E_6}$, but their orientations 
are different in general. 
Three "wings" of $|\Gamma _0|$ 
$2-1-0,\  2'-1'-0, \ 2''-1''-0$ 
can be regarded as  Dynkin diagrams $A_3$. 
Applying Lemma  \ref{lemma:orientation-change} for 
these wings locally, we can 
find a sequence $v_1, \dots, v_m$ of vertices in 
$\Gamma _0$ such that 
\begin{itemize}
\item [(1)] for each $k= 1, \dots, m$, $v_k$ is a source in 
$\sigma^-_{v_{k-1}} \dots \sigma^-_{v_2} \sigma^-_{v_1}(\Gamma _0)$, 
\item [(2)] 
$\sigma^-_{v_m} \dots \sigma^-_{v_2} \sigma^-_{v_1}(\Gamma _0) = \Gamma$, 
\item[(3)]for each $k= 1, \dots, m$, $v_k \not= 0$. 
\end{itemize}
We note that co-closedness of Hilbert representations at 
a source can be checked locally around the source. 
Since the restriction of the representation $(H^{(0)},f^{(0)})$ 
to each "wing" is positive-unitary diagonal and the iteration of 
reflection functors does not move the vertex $0$, we can apply 
Lemma \ref{lemma:positive-unitary-coslosed} and 
Proposition \ref{prop:positive-unitary-iteration}
 locally 
that $\Phi^-_{v_{k-1}} \dots $
$
\Phi^-_{v_2} \Phi^-_{v_1}(H^{(0)},f^{(0)})$ is 
co-closed at $v_k$ for $k= 1, \dots, m$. 
Since the particular Hilbert space $H_0^{(0)}$ associated with 
the vertex $0$ is infinite-dimensional and remains unchanged 
under the iteration of the reflection functors above, 
$\Phi^{-}_{v_{i}}\cdots \Phi^{-}_{v_{1}}(H^{(0)},f^{(0)})(1\leq i\leq m)$ is 
infinite-dimensional.
Therefore
the theorem \ref{thm:preserving-indecomposability} implies that  
$$\Phi^{-}_{v_{i}}\cdots \Phi^{-}_{v_{1}} (H^{(0)},f^{(0)}) (1\leq i\leq m)$$
is infinite-dimensional indecomposable Hilbert
representation of $\sigma^-_{v_i}$ $\dots \sigma^-_{v_2} \sigma^-_{v_1}(\Gamma)$ .
By Theorem \ref{thm:preserving-indecomposability} ,
we have ,
for $$(K,g):=\Phi^{-}_{v_{i}} \cdots \Phi^{-}_{v_{1}}(H^{(0)},f^{(0)})
 (1\leq i\leq m),$$
$$(K,g)\cong 
\Phi^{+}_{v_{i+1}}\Phi^{-}_{v_{i+1}}(K,g).$$
On the other hand,
by Lemma \ref{lemma:co-full},
$\Phi^{+}_{v_{i+1}}\Phi^{-}_{v_{i+1}}(K,g)$
is co-full at $v_{i+1}$.
Since $(K,g)\cong 
\Phi^{+}_{v_{i+1}}\Phi^{-}_{v_{i+1}}(K,g)$,
by  Lemma \ref{lemma iso-co-full},
we have that
$(K,g)$ is co-full at $v_{i+1}$.
Hence  Theorem \ref{thm phi-}
implies that
$End(K,g)\cong 
End(\Phi^{-}_{v_{i+1}}(K,g))$.
By induction,
we have
$$End(H^{(0)},f^{(0)})\cong End(\Phi^{-}_{v_{m}}\cdots 
\Phi^{-}_{v_{1}}(H^{(0)},f^{(0)})).$$
Since $(H^{(0)},f^{(0)})$ is transitive,
$(\Phi^{-}_{v_{m}}\cdots 
\Phi^{-}_{v_{1}}(H^{(0)},f^{(0)}))$ is also transitive.
Thus 
there exist infinite-dimensional transitive Hilbert representations for quivers with any orientation whose underlying undirected graphs is 
 extended Dynkin diagram $\widetilde{E_{6}}$.
The other cases $\tilde{E_7}$
and $\tilde{E_8}$ 
 are proved similarly.
\end{proof}

\end{document}